\newcommand{\QQ}{\mathbb{Q}}
\newcommand{\RR}{\mathbb{R}}
\newcommand{\ZZ}{\mathbb{Z}}
\newcommand{\vA}{\mathcal{A}}
\newcommand{\vB}{\mathcal{B}}
\newcommand{\vC}{\mathcal{C}}
\newcommand{\vG}{\mathcal{G}}
\newcommand{\vH}{\mathcal{H}}
\newcommand{\vK}{\mathcal{K}}
\newcommand{\vS}{\mathcal{S}}
\newcommand{\Aut}{\operatorname{Aut}}
\newcommand{\Homeo}{\operatorname{Homeo}}
\newcommand{\LAut}{\operatorname{LAut}}
\newcommand{\LHomeo}{\operatorname{LHomeo}}
\newcommand{\LMod}{\operatorname{LMod}}
\newcommand{\Mod}{\operatorname{Mod}}
\newcommand{\PAut}{\operatorname{PAut}}
\newcommand{\SAut}{\operatorname{SAut}}
\newcommand{\SHomeo}{\operatorname{SHomeo}}
\newcommand{\SMod}{\operatorname{SMod}}
\newcommand{\tr}{\operatorname{tr}}
\newcommand{\lra}{\longrightarrow}
\newcommand{\sm}{\setminus}
\newcommand{\ol}[1]{\overline{#1}}
\newcommand{\wt}[1]{\widetilde{#1}}
\newcommand{\id}{\text{id}}
\newcommand{\abs}[1]{\left\lvert #1 \right\rvert}
\newcommand{\eand}{\quad \text{ and } \quad}
\definecolor{lightgrey}{gray}{.85}
\definecolor{inkscapeFuchsia}{HTML}{FF00FF}
\newcommand{\red}{\color{Red}}
\newcommand{\green}{\color{Green}}
\newcommand{\fuchsia}{\color{inkscapeFuchsia}}
\theoremstyle{definition}
\theoremstyle{plain}
\newtheorem{thm}{Theorem}[section]
\newtheorem{lem}[thm]{Lemma}
\newtheorem{cor}[thm]{Corollary}
\newtheorem{prop}[thm]{Proposition}
\newtheorem{qu}[thm]{Question}
\theoremstyle{definition}
\newtheorem{RMK}[thm]{Remark}
\begin{document}
\title[Mapping class groups of covers with boundary]{Mapping class groups of covers with boundary and braid group embeddings}
\author{Tyrone Ghaswala}
\address{Department of Mathematics, University of Manitoba, Winnipeg, R3T 2N2, Canada}
\email{ty.ghaswala@gmail.com}
\author{Alan McLeay}
\address{Mathematics Research Unit, University of Luxembourg, Esch-sur-Alzette, Luxembourg}
\email{mcleay.math@gmail.com}
\maketitle

%%%%%%%%%%%%%%%%%%%%%%%%%%%%%%%%%%%
\begin{abstract}
We consider finite-sheeted, regular, possibly branched covering spaces of compact surfaces with boundary and the associated liftable and symmetric mapping class groups.  In particular, we classify when either of these subgroups coincides with the entire mapping class group of the surface. As a  consequence, we construct infinite families of non-geometric embeddings of the braid group into mapping class groups in the sense of Wajnryb.  Indeed, our embeddings map standard braid generators to products of Dehn twists about curves forming chains of arbitrary length. As key tools, we use the Birman-Hilden theorem and the action of the mapping class group on a particular fundamental groupoid of the surface.
\end{abstract}

%%%%%%%%%%%%%%%%%%%%%%%%%%%%%%%%%%%

\section{Introduction}\label{Introduction}

The {\it mapping class group} $\Mod(\Sigma,\vB)$ of a compact orientable surface $\Sigma$ with finitely many marked points $\vB$ is the group of orientation-preserving homeomorphisms of $\Sigma$ preserving $\vB$ setwise, up to isotopies that preserve $\vB$.  If $\Sigma$ has non-empty boundary, then homeomorphisms and isotopies must fix the boundary pointwise.  We may sometimes denote a genus $g$ surface with $m$ boundary components by $\Sigma_g^m$.

Leveraging covering spaces to study mapping class groups has been a fruitful endeavour. See the work of Aramayona-Leininger-Souto \cite{ALS}, Bigelow-Budney \cite{BB}, Brendle-Margalit \cite{BM}, Brendle-Margalit-Putman \cite{BMP}, Endo \cite{Endo}, Morifuji \cite{Morifuji}, and Stukow \cite{Stukow1,Stukow2} to name only a few.

Let $p:\wt \Sigma \to \Sigma$ be a finite-sheeted, regular covering space of surfaces with deck group $D$, possibly branched at $\vB \subset \Sigma$. Recall that a homeomorphism $f:\wt\Sigma \to \wt \Sigma$ is called {\it fibre-preserving} if $p(x) = p(y)$ implies $pf(x) = pf(y)$.  Let $\SMod(\wt\Sigma) < \Mod(\wt\Sigma)$ be the subgroup consisting of isotopy classes of fibre-preserving homeomorphisms, called the {\it symmetric mapping class group}.  Let $\LMod(\Sigma,\vB) < \Mod(\Sigma,\vB)$ be the subgroup consisting of isotopy classes of homeomorphisms that lift to boundary preserving homeomorphisms of $\wt\Sigma$, called the {\it liftable mapping class group}.

Surveying the landscape of results, one notices that the fertile ground often occurs when at least one of the liftable or symmetric mapping class groups coincides with the mapping class groups $\Mod(\Sigma, \vB)$ or $\Mod(\wt \Sigma)$ respectively.  This leads to the following natural questions:

\begin{itemize}
\item When does $\LMod(\Sigma,\vB) = \Mod(\Sigma,\vB)$?
\item When does $\SMod(\wt\Sigma) = \Mod(\wt\Sigma)$?
\item If equality does not hold, when are these subgroups finite-index?
\end{itemize}

These questions are answered in Theorems \ref{ClassyLMod} and \ref{ClassySMod} of this paper in the case where the surfaces have non-empty boundary.

The first author and Winarski classified all cyclic branched covers of the sphere with the property that $\LMod(\Sigma_0,\vB) = \Mod(\Sigma_0,\vB)$ \cite[Theorem 1.1]{GW}.  Birman and Hilden proved that if $g\geq 3$, where $g$ is the genus of $\wt \Sigma$, there are no finite cyclic covers $p:\wt\Sigma \to \Sigma_0$ of the sphere such that $\SMod(\wt\Sigma) = \Mod(\wt\Sigma)$ \cite[Theorem 6]{BH}.  After completing the proof, they make the following remark:\\
\\
{\it The possibility remains that if we relax the requirements on $(p,\Sigma_0,\wt\Sigma)$ to admit coverings of other Riemann surfaces, or to admit all regular coverings, or to admit non-regular coverings that we will have better luck. (However we conjecture that all such efforts will fail).}\\
\\
As we will see, our results agree with their sentiment.

\subsection{Main Results}
From now on, let $p:\wt\Sigma \to \Sigma$ be a non-trivial, finite-sheeted, regular cover of a surface $\Sigma$.  We will take both $\wt \Sigma$ and $\Sigma$ to be compact with non-empty boundary, possibly branched at $\vB \subset \Sigma$.

\subsection*{The Burau covers} \label{burau_def}  Let ${\bf D}_n$ be a disk with $n$ points removed, and enumerate the punctures.  Pick a point $x \in \partial {\bf D}_n$ and let $\gamma_i \in \pi_1({\bf D}_n,x)$ be the homotopy class of a loop surrounding only the $i$th puncture anti-clockwise.  Then $\{\gamma_1,\ldots,\gamma_n\}$ generates $\pi_1({\bf D}_n,x)$.  For each $k\geq 2$, define a homomorphism $q_k:\pi_1({\bf D}_n,x) \to \ZZ/k\ZZ$ by $q_k(\gamma_i) = 1$ for all $i$.  The kernel of $q_k$ determines a $k$-sheeted cyclic branched cover  $p_k:\Sigma_g^m \to \Sigma_0^1$ branched at $n$ points.  Here $m = \gcd(n,k)$ and $g = 1-\frac 12(k+n+m-nk)$.  We will call such a cover a {\it $k$-sheeted Burau cover}.  The $2$-sheeted Burau covers are usually referred to as \emph{hyperelliptic covers} of a disk as in each case the non-trivial element of the deck group is a hyperelliptic involution.

We note that the Burau covers have previously been considered by McMullen to study certain unitary representations of the braid group \cite{McMullen}.

\begin{thm}\label{ClassyLMod}
Let $p:\wt \Sigma \to \Sigma$ be a non-trivial, finite-sheeted, regular covering space of compact surfaces with boundary, possibly branched at $\vB \subset \Sigma$. Then
\begin{enumerate}
\item[\normalfont (i)] $\LMod(\Sigma,\vB) = \Mod(\Sigma,\vB)$ if and only if $p$ is a Burau cover, and
\item[\normalfont (ii)] $\LMod(\Sigma, \vB)$ is always finite-index in $\Mod(\Sigma, \vB)$.
\end{enumerate}
\end{thm}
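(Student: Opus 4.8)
The plan is to prove both parts simultaneously by analyzing the obstruction to lifting a homeomorphism in terms of the induced action on $\pi_1$. Recall that $f \in \Mod(\Sigma,\vB)$ lifts to $\wt\Sigma$ if and only if $f_*$ preserves the subgroup $p_*\pi_1(\wt\Sigma) < \pi_1(\Sigma\sm\vB, x)$; equivalently, since the cover is regular and classified by the surjection $\varphi\colon \pi_1(\Sigma\sm\vB,x)\to D$, the condition is that $f_*$ descends to an automorphism of $D$, i.e. $\varphi \circ f_* = \psi \circ \varphi$ for some $\psi \in \Aut(D)$. So $\LMod(\Sigma,\vB)$ is precisely the stabiliser of the subgroup $\ker\varphi$ under the $\Mod(\Sigma,\vB)$-action on subgroups of $\pi_1$, and part (ii) amounts to showing this stabiliser has finite index.

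For part (ii), first I would reduce to the case where $\Sigma$ is a disk with marked points: a compact surface with boundary has free fundamental group, and after capping boundary components that carry no branching with punctured disks one sees the general statement follows from the genus-zero, one-boundary-component case together with the observation that $\Mod$ of the capped surface surjects appropriately (alternatively, argue directly). In the disk case, $\pi_1({\bf D}_n, x)$ is free of rank $n$ on the standard loops $\gamma_i$, and $\Mod({\bf D}_n) \cong B_n$ acts on it. The key point is that the set of finite-index normal subgroups $N < \pi_1$ with fixed finite quotient $D$ is \emph{finite}: there are only finitely many surjections $\pi_1 \to D$ from a finitely generated group to a fixed finite group, hence only finitely many such kernels. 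The braid group $B_n$ permutes the (finitely many) surjections $\pi_1 \to D$ up to post-composition by $\Aut(D)$, and $\LMod$ is the stabiliser of the class $[\varphi]$, hence finite-index. This gives (ii).

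For part (i), the "if" direction is the statement that for a Burau cover the defining homomorphism $q_k(\gamma_i)=1$ is preserved (up to sign, i.e. up to $\Aut(\ZZ/k)$) by every braid generator $\sigma_j$: indeed $\sigma_j$ sends $\gamma_j \mapsto \gamma_j\gamma_{j+1}\gamma_j^{-1}$ and $\gamma_{j+1}\mapsto \gamma_j$, both of which have $q_k$-value $1$, so $q_k\circ(\sigma_j)_* = q_k$ and $\sigma_j$ lifts; since the $\sigma_j$ generate $\Mod({\bf D}_n)$ we get $\LMod = \Mod$. The "only if" direction is the crux: assuming $\LMod(\Sigma,\vB)=\Mod(\Sigma,\vB)$, I must show $\Sigma$ is a disk, the branch locus is all of $\vB$ (no genuine unbranched handles or boundary), and $\varphi$ is, up to automorphism of $D$, the total-winding-number map to a cyclic group — forcing $D$ cyclic and the cover Burau. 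The strategy is: (a) half-twists $\sigma_j$ swapping two punctures $i, i+1$ must act on $D$, forcing $\varphi(\gamma_i)$ and $\varphi(\gamma_{i+1})$ to be conjugate and in fact (using that a chain of such swaps connects all punctures) all $\varphi(\gamma_i)$ conjugate to a single element $d$; (b) the braid relation / the point-pushing subgroup forces these elements to commute, so $D$ is generated by conjugates of $d$ that pairwise commute, giving $D$ abelian and then cyclic generated by $d$; (c) a Dehn twist about a curve surrounding two punctures lifts only if $\varphi$ of that curve, namely $2d$ (in additive notation) behaves correctly, and pushing this analysis over all handles and boundary components of $\Sigma$ rules out any topology beyond a disk and any branch-free part of $\vB$. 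I expect step (c) — ruling out extra handles and boundary components, i.e. showing that if $\Sigma$ has positive genus or more than one boundary component then some Dehn twist fails to lift — to be the main obstacle, and the natural tool is exactly the one advertised in the abstract: the action of $\Mod(\Sigma,\vB)$ on a suitable fundamental groupoid, which lets one track $\varphi$-values of arcs between boundary components and detect the failure of liftability concretely.
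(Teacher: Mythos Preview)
Your characterization of $\LMod(\Sigma,\vB)$ as the stabilizer of $\ker\varphi$ under the $\Mod(\Sigma,\vB)$-action on $\pi_1(\Sigma\sm\vB,x)$ is incorrect, and this is precisely the subtlety the paper is built around. The condition $\varphi\circ f_* = \psi\circ\varphi$ for some $\psi\in\Aut(D)$ guarantees only that $f$ lifts in the ordinary covering-space sense; $\LMod$ requires the lift to fix $\partial\wt\Sigma$ \emph{pointwise}, which is strictly stronger when $\Sigma$ (or $\wt\Sigma$) has more than one boundary component. The paper's annulus example makes this concrete: the core Dehn twist $T\in\Mod(A)$ acts trivially on $\pi_1(A)$ and hence stabilizes every subgroup, yet $T\notin\LMod(A)$ for the $2$-sheeted cover $\wt A\to A$, because both lifts of $T$ swap the boundary components of $\wt A$. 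Even in the one-boundary case the correct condition (Corollary~\ref{one_boundary}) is the sharper $\varphi\circ f_* = \varphi$, not merely preservation of the kernel.

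This gap propagates through both parts. In (ii) your orbit--stabilizer argument bounds the index of the $\pi_1$-stabilizer, not of $\LMod$; the proposed ``reduction to the disk'' by capping boundaries is not made precise and does not respect the boundary-fixing lifting condition. In (i), ruling out extra boundary components is exactly the step where your characterization fails, so step (c) cannot be completed with the tools you have set up. The paper's remedy is to replace $\pi_1(\Sigma^\circ,x)$ by the fundamental \emph{groupoid} $\vG=\pi_1(\Sigma^\circ,A)$ with one basepoint on each boundary component, and to prove (Theorem~\ref{lmodmain}) that $\LMod$ is exactly the set of classes acting trivially on cosets of the normal subgroupoid $\vH=p_*\pi_1(\wt\Sigma^\circ,\wt A)$. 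Part (ii) then follows from a general finite-index lemma for $\LAut_\vH(\vG)$ in $\PAut(\vG)$ (Lemma~\ref{LAut_finite_index}); part (i) ``only if'' uses change-of-coordinates on arcs in the groupoid to force first $m=1$, then $g=0$, and finally $q(c_i)=q(c_1)$ for all $i$. Your ``if'' direction and the broad strategy of using transitivity of the $\Mod$-action to constrain $\varphi$ are correct and match the paper; what is missing is recognizing that the groupoid is needed from the outset, not just in step (c).
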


For a general covering space $p:\wt X \to X$, a homeomorphism $f:X \to X$ lifts to a homeomorphism $\tilde f:\wt X \to \wt X$ if and only if $f_*p_*\pi_1(\wt X,\tilde x) = p_*\pi_1(\wt X,\tilde x)$.  It is therefore tempting to characterise liftable mapping classes by their action on the fundamental group of the base space.  This approach ultimately falls short since it may be the case that a homeomorphism of the base space $\Sigma$ lifts to a homeomorphism of $\wt \Sigma$,  but none of its lifts fix the boundary of $\wt \Sigma$ pointwise.  Indeed, consider the 2-sheeted unbranched cover of an annulus $A$ by an annulus $\wt A$.  Any representative Dehn twist of $T \in \Mod(A)$ acts trivially on $\pi_1(A,x)$, however neither of its lifts preserve the boundary of $\wt A$ pointwise (see Figure \ref{AnnularTwist}).  Therefore $T \notin \LMod(A)$.  On the other hand, $T^2 \in \LMod(A)$.

\begin{figure}[t]
\begin{center}
\labellist \small \hair 1pt
	\pinlabel {$\wt T$} at 300 305
	\pinlabel {$T$} at 300 80
    \endlabellist
\includegraphics[scale=0.3]{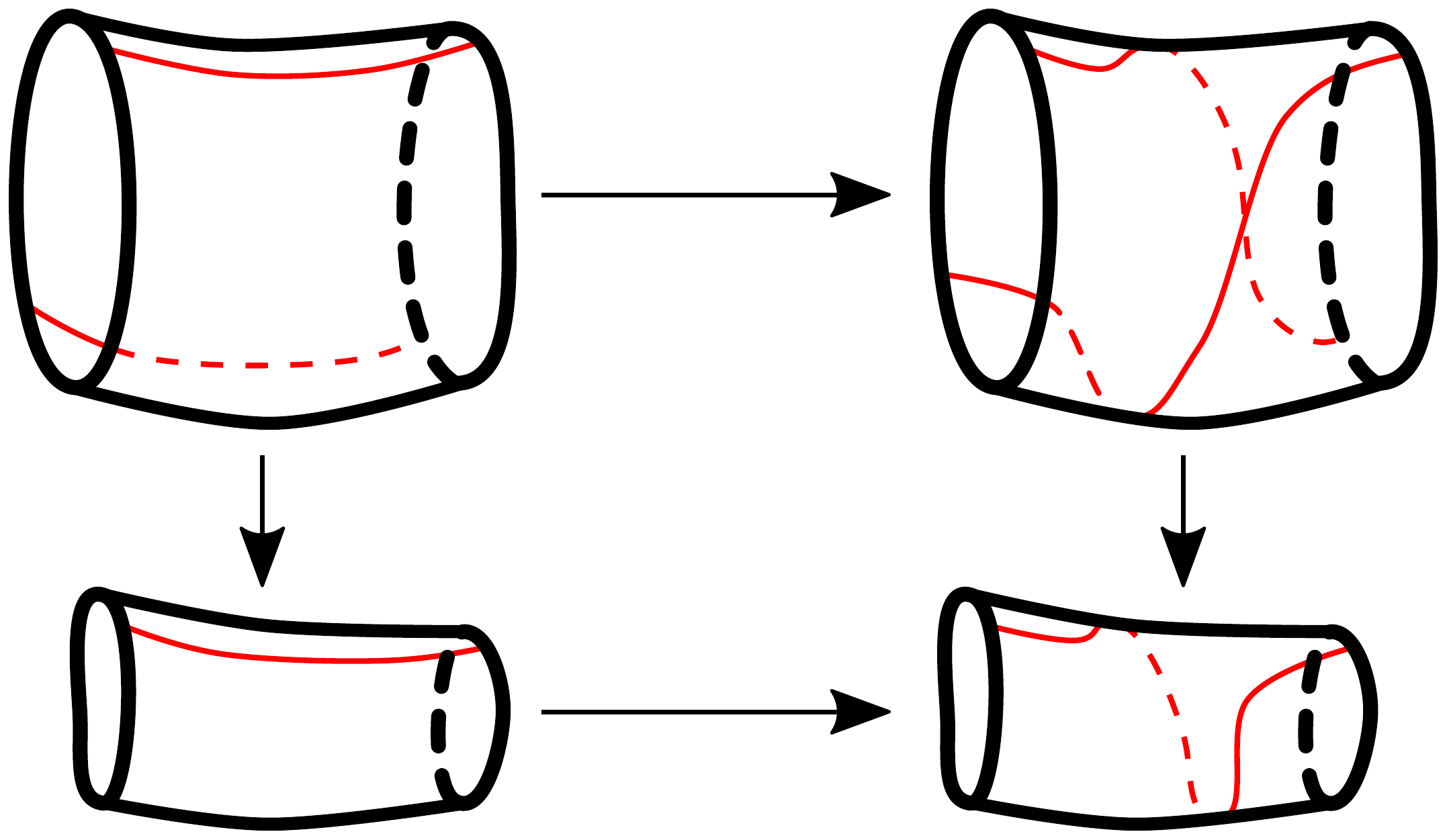}
\end{center}
\caption{One of the lifts of a Dehn twist $T \in \LHomeo^+(A)$ that does not preserve the boundary of $\wt A$ pointwise.}
\label{AnnularTwist}
\end{figure}

In order to get around this issue, we study the action of $\Mod(\Sigma,\vB)$ on a particular fundamental groupoid of $\Sigma$.  A key piece of the proof of Theorem \ref{ClassyLMod} is a characterisation of elements of $\LMod(\Sigma,\vB)$ in terms of their action on the fundamental groupoid in Theorem \ref{lmodmain}. The corresponding result for the symmetric mapping class group is as follows.

\begin{thm}\label{ClassySMod}
Let $p:\wt \Sigma \to \Sigma$ be a non-trivial, finite-sheeted, regular, possibly branched covering space of compact surfaces with boundary.  Then
\begin{enumerate}
\item[\normalfont (i)] $\SMod(\wt \Sigma) = \Mod(\wt \Sigma)$ if and only if $\wt \Sigma$ is a disk, an annulus, or $p:\Sigma_1^1 \to \Sigma_0^1$ is the hyperelliptic cover.
\item[\normalfont (ii)] Otherwise, $\SMod(\wt\Sigma)$ is infinite-index in $\Mod(\wt\Sigma)$.
\end{enumerate}
\end{thm}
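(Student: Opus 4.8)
The plan is to pivot through the liftable mapping class group using the Birman--Hilden correspondence, and then feed in Theorem~\ref{ClassyLMod}. Since $\vB$ lies in the interior of $\Sigma$, the cover is unbranched near $\partial\wt\Sigma$, so any fibre-preserving homeomorphism fixing $\partial\wt\Sigma$ descends to an element of $\Mod(\Sigma,\vB)$, and conversely a liftable class admits a lift fixing $\partial\wt\Sigma$ that is unique (two such lifts differ by a deck transformation fixing boundary points, hence by the identity). Granting the Birman--Hilden property for $p$---which is available here (cf.\ \cite{BH}), the relevant point being that once $\wt\Sigma$ is neither a disk nor an annulus the quotient orbifold $\Sigma$ satisfies $\chi^{\mathrm{orb}}(\Sigma)=\chi(\wt\Sigma)/\deg(p)<0$ and is hyperbolic---the descent map is a well-defined isomorphism
\[
\SMod(\wt\Sigma)\ \xrightarrow{\ \cong\ }\ \LMod(\Sigma,\vB),
\]
with well-definedness coming from the Birman--Hilden property, injectivity from lifting an isotopy of $\Sigma$ and noting that the lifted isotopy fixes $\partial\wt\Sigma$ throughout, and surjectivity directly from the definition of $\LMod$.

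For the forward direction of (i) the three families are treated by hand, since when $\wt\Sigma$ is a disk or an annulus the base orbifold is not hyperbolic and the isomorphism above is unavailable. If $\wt\Sigma$ is a disk then $\Mod(\wt\Sigma)$ is trivial. If $\wt\Sigma$ is an annulus then $\Mod(\wt\Sigma)$ is generated by the twist $T$ about the core; the core is the unique essential simple closed curve up to isotopy, hence is preserved setwise by every deck transformation, and after realising the finite deck group by isometries of a standard annulus it becomes genuinely invariant, so $T$ has a fibre-preserving representative and $\SMod(\wt\Sigma)=\Mod(\wt\Sigma)$. If $p\colon\Sigma_1^1\to\Sigma_0^1$ is the hyperelliptic cover (the double cover branched at three points) with deck involution $\iota$, then $\Mod(\Sigma_1^1)$ is generated by two Dehn twists about curves that may be chosen $\iota$-invariant (each double-covering an arc joining a pair of branch points), so those twists lie in $\SMod(\Sigma_1^1)$ and equality again holds.

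For the reverse direction of (i) and for part (ii) it suffices to prove (ii), i.e.\ that $\SMod(\wt\Sigma)$ has infinite index in $\Mod(\wt\Sigma)$ whenever $(p,\wt\Sigma)$ is not on the list of (i). I would argue by contradiction: if the index were finite, then combining the isomorphism above with Theorem~\ref{ClassyLMod}(ii) (which makes $\LMod(\Sigma,\vB)$ finite-index in $\Mod(\Sigma,\vB)$) shows that $\Mod(\wt\Sigma)$ and $\Mod(\Sigma,\vB)$ contain isomorphic finite-index subgroups, hence are abstractly commensurable, hence have equal virtual cohomological dimension. Now apply Harer's formula for the vcd of mapping class groups of surfaces with boundary and punctures, together with the Riemann--Hurwitz identity $\chi(\wt\Sigma)=\deg(p)\,\chi^{\mathrm{orb}}(\Sigma)$ and the elementary bounds $1\le|p^{-1}(b)|\le\tfrac12\deg(p)$ at each branch point $b$. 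A short case analysis according to the genera of $\wt\Sigma$ and of $\Sigma$ then shows that equality of the two vcd's forces $(p,\wt\Sigma)$ onto exactly the list of (i), contradicting the hypothesis; so the index is infinite, which in particular yields the ``only if'' direction of (i).

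The step I expect to be the main obstacle is this last case analysis: one must check not merely that the two dimension formulas disagree generically, but that in each numerically borderline low-complexity case the cover really is a disk cover, an annulus cover, or the hyperelliptic cover $\Sigma_1^1\to\Sigma_0^1$---so that the coincidence locus of the vcd's is \emph{precisely} the exceptional list, with no extraneous examples. An alternative that sidesteps this bookkeeping is to produce, in each non-exceptional case, a single non-separating simple closed curve $c\subset\wt\Sigma$ disjoint from all of its deck-group translates, with $[c]$ and $[d\cdot c]$ linearly independent in $H_1(\wt\Sigma;\QQ)$ for some deck element $d$; then the action of $T_c^{\,e}$ on $H_1(\wt\Sigma)$ fails to normalise the finite image of the deck group for all large $|e|$, so $T_c^{\,e}\notin\SMod(\wt\Sigma)$ and the cosets $T_c^{\,2i}\SMod(\wt\Sigma)$ ($i\in\ZZ$) are pairwise distinct---but with this route the obstacle simply migrates to proving that such a curve exists in exactly the non-exceptional cases.
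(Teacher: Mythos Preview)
Your forward direction of (i) matches the paper's almost verbatim. The divergence is entirely in the converse and in (ii), where you go through abstract commensurability and Harer's vcd formula, whereas the paper argues directly from the geometry of the deck group action.

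The paper's route is the following. If $\SMod(\wt\Sigma)=\Mod(\wt\Sigma)$ then every Dehn twist is symmetric, so every isotopy class of essential simple closed curves is $D$-invariant. This forces each $d\in D$ to act on $H_1(\wt\Sigma;\ZZ)$ by a diagonal matrix with entries $\pm 1$ in a natural basis. A Lefschetz fixed-point count (the paper's Lemma~\ref{number_fixed_points} and Corollary~\ref{nontrivial_action}) shows no non-trivial finite-order homeomorphism of $\wt\Sigma$ acts trivially on $H_1$ unless $\wt\Sigma$ is a disk or an annulus, so some entry is $-1$; one then manufactures, from a boundary class and an $a_i$ with $\epsilon_i=-1$, a curve whose class is not sent to $\pm$ itself, contradicting $D$-invariance unless $m=1$. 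The same trick with two indices forces $d_*=-I$, hence $|D|=2$ and $D$ is generated by a hyperelliptic involution; finally an explicit curve on $\Sigma_g^1$ with $g\ge 2$ is exhibited that the involution does not preserve. Part (ii) then falls out in one line: once there is a curve $c$ with $d(c)\neq c$, the powers $T_c^n$ lie in pairwise distinct cosets of $\SMod(\wt\Sigma)$.

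Your approach is not wrong in principle, but it is genuinely incomplete: the entire content of the theorem has been pushed into the ``short case analysis'' that you do not carry out. To make the vcd argument go through you would need to show that the system
\[
\mathrm{vcd}\bigl(\Mod(\wt\Sigma)\bigr)=\mathrm{vcd}\bigl(\Mod(\Sigma,\vB)\bigr),\qquad \chi(\wt\Sigma)=\deg(p)\,\chi^{\mathrm{orb}}(\Sigma),\qquad |\vB|\le |p^{-1}(\vB)|\le \tfrac{1}{2}\deg(p)\,|\vB|
\]
has no solutions outside the exceptional list, splitting according to whether the genera of $\wt\Sigma$ and $\Sigma$ vanish (Harer's formula changes shape at $g=0$) and treating the branched and unbranched cases separately. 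This is doable, but it is exactly the work, and it is more bookkeeping than the paper's argument, which needs only first homology and Lefschetz. Your alternative route---producing a curve $c$ with $[c]$ and $[d\cdot c]$ independent in $H_1$---is much closer in spirit to what the paper actually does, except that the paper does not need disjointness or linear independence: it only needs $d(c)\neq c$ for some $d\in D$, and it obtains such a curve by the homology argument above rather than by an ad hoc construction. Note also a small slip in your alternative: knowing $T_c^{\,e}\notin\SMod(\wt\Sigma)$ only for large $|e|$ does not by itself give pairwise distinct cosets; you need it for all $e\neq 0$, which is immediate once you have $d(c)\neq c$.
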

As shown, the property that $\SMod(\wt\Sigma) = \Mod(\wt\Sigma)$ is rare, echoing Birman and Hilden's sentiment in the case of surfaces with boundary.

The techniques used in this paper unfortunately do not naturally extend to surfaces without boundary. What the precise analogous statements of Theorems \ref{ClassyLMod} and \ref{ClassySMod} should be remains an intriguing open question. 

\subsection{Braid group embeddings}

The braid group $B_n$ on $n$ strands is isomorphic to the mapping class group $\Mod({\bf D}_n)$, where ${\bf D}_n$ is a disk with $n$ punctures.  There is a standard embedding of the braid group in the mapping class group, which sends each standard generator to a Dehn twist.  Such an embedding is called {\it geometric}.

A question of Wajnryb asks whether there are non-geometric embeddings of the braid group in a mapping class group \cite{Wajnryb}.  Non-geometric embeddings have since been constructed in the works of B\"odigheimer-Tillman \cite{BT}, Kim-Song \cite{KS}, Song \cite{Song}, Song-Tillman \cite{ST}, and Szepietowski \cite{Szepietowski}.

Using Theorem \ref{ClassyLMod} and the Birman-Hilden theorem, we construct a family of non-geometric embeddings of the braid group.  Let $\sigma_1,\ldots,\sigma_{n-1}$ be the standard braid generators for $B_n$.

A \emph{$k$-chain} on a surface $\Sigma$ is a sequence of simple closed curves $\{a_1,\ldots,a_k\}$ on $\Sigma$ such that $i(a_i,a_j) = 1$ if $\abs{i-j} = 1$ and $i(a_i,a_j) = 0$ otherwise.  Here $i(a_i,a_j)$ is the geometric intersection number of the curves $a_i$ and $a_j$.
A {\it $k$-chain twist} is the mapping class $T_{a_1}\cdots T_{a_k}$ where $\{a_1,\ldots,a_k\}$ is a $k$-chain.  It is true that if $k\geq 2$, then a $k$-chain twist is not equal to a single Dehn twist. 

\begin{thm}\label{BETAK}
Let $n \geq 2$.  For each $k \geq 2$, there exists a surface $\Sigma$ and an injective homomorphism $\beta_k:B_n \to \Mod(\Sigma)$ such that $\beta_k(\sigma_i)$ is a $(k-1)$-chain twist for all $i$.
\end{thm}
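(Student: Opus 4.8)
The plan is to obtain $\beta_k$ as the inverse of the Birman--Hilden isomorphism for a Burau cover of ${\bf D}_n$, and then to identify the image of each standard generator $\sigma_i$ by a local analysis over a twice-branched disk. Throughout I identify $B_n$ with $\Mod({\bf D}_n)$, where ${\bf D}_n=\Sigma_0^1$ carries the $n$ marked points $\vB$.

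First I would fix the surface. Given $k\ge 2$, let $p=p_k\colon\wt\Sigma\to{\bf D}_n$ be the $k$-sheeted Burau cover; here $\wt\Sigma=\Sigma_g^m$ is connected with non-empty boundary, and I take $\Sigma:=\wt\Sigma$. Since $p$ is a Burau cover, Theorem~\ref{ClassyLMod}(i) gives $\LMod({\bf D}_n,\vB)=\Mod({\bf D}_n,\vB)=B_n$, so every element of $B_n$ is liftable. I would then note that no nontrivial deck transformation of $p$ fixes $\partial\wt\Sigma$ pointwise: the deck group either permutes the boundary components or acts on a single one by a nontrivial rotation. Hence each $f\in\LMod({\bf D}_n,\vB)$ has a \emph{unique} lift $\wt f\in\Mod(\wt\Sigma)$ fixing $\partial\wt\Sigma$ pointwise; lifting isotopies shows $f\mapsto\wt f$ is a well-defined homomorphism $B_n\to\Mod(\Sigma)$, and it is the inverse of the natural projection $\SMod(\wt\Sigma)\to\LMod({\bf D}_n,\vB)$. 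By the Birman--Hilden theorem this projection is injective on mapping class groups, so $\beta_k:=(f\mapsto\wt f)$ is an injective homomorphism.

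It remains to compute $\beta_k(\sigma_i)$. Choose an arc $\alpha_i\subset{\bf D}_n$ joining the $i$th and $(i{+}1)$st marked points and meeting $\vB$ only at its endpoints, and a closed regular neighbourhood $D_i$ of $\alpha_i$ disjoint from the other marked points, so that $\sigma_i$ is the half-twist supported in $D_i$ and $\sigma_i^2=T_{\partial D_i}$. Restricting $p$, the covering $p^{-1}(D_i)\to D_i$ is the $k$-sheeted cyclic cover of a disk totally branched over two points. Thus $p^{-1}(\alpha_i)$ is the graph with two vertices (the preimages of the two branch points) and $k$ edges $e_1,\dots,e_k$, with the cyclic edge-orderings at the two vertices reversed, and $p^{-1}(D_i)$ is a regular neighbourhood of this graph. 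Setting $a^{(i)}_j:=e_j\cup e_{j+1}$ for $1\le j\le k-1$ produces a $(k-1)$-chain: consecutive curves share a single edge and meet once, non-consecutive ones are disjoint, and an Euler-characteristic count identifies $p^{-1}(D_i)$ with a chain neighbourhood of length $k-1$ (genus $\tfrac{k-1}{2}$ with one boundary component for $k$ odd, genus $\tfrac{k-2}{2}$ with two for $k$ even). Working in this local model, I would verify that the boundary-fixing lift of $\sigma_i$ is precisely the $(k-1)$-chain twist $T_{a^{(i)}_1}T_{a^{(i)}_2}\cdots T_{a^{(i)}_{k-1}}$; a consistency check is that $\sigma_i^2=T_{\partial D_i}$ together with the $(k-1)$-chain relation in $p^{-1}(D_i)$ pin down this lift, the residual ambiguity being ruled out by torsion-freeness of $\Mod(\Sigma)$. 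This yields $\beta_k(\sigma_i)$ a $(k-1)$-chain twist, completing the proof.

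The step I expect to be the main obstacle is the last one: showing that the boundary-fixing lift of the half-twist is exactly the chain twist about the natural chain $\{a^{(i)}_1,\dots,a^{(i)}_{k-1}\}$, rather than some other product of these twists or a proper power of the chain twist. This amounts to exhibiting an explicit isotopy in the branched model $p^{-1}(D_i)\to D_i$ (for $k=2$ it is the familiar fact that a half-twist lifts to a single Dehn twist in the hyperelliptic cover). The other ingredients --- Theorem~\ref{ClassyLMod}, the Birman--Hilden theorem, and uniqueness of the boundary-fixing lift --- are routine once the boundary behaviour of the deck transformations has been recorded; the groupoid characterisation of $\LMod$ in Theorem~\ref{lmodmain} can be used to organise the liftability bookkeeping if needed.
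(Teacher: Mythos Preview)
Your setup is exactly the paper's: define $\beta_k$ as the composite
\[
B_n \cong \Mod(\Sigma_0^1,\vB) = \LMod(\Sigma_0^1,\vB) \cong \SMod(\Sigma_g^m) \hookrightarrow \Mod(\Sigma_g^m),
\]
using Theorem~\ref{ClassyLMod}(i) for the equality and Theorem~\ref{LModSMod} (Birman--Hilden with boundary) for the second isomorphism. The reduction to the local $k$-sheeted Burau cover of ${\bf D}_2$, and your description of the candidate chain as the cycles $e_j\cup e_{j+1}$ in the graph $p^{-1}(\alpha_i)$, are also in line with what the paper does.

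The gap is precisely the one you flag, but the ``consistency check'' you propose does not close it. Knowing that $\beta_k(\sigma_i)$ and $T_\vC$ have a common power equal to the boundary twist(s) does \emph{not} force them to be equal: roots of a central element in a mapping class group are far from unique, and torsion-freeness only helps once the two elements are known to commute, which you have not established. Already for $k=3$ the boundary twist $T_d\in\Mod(\Sigma_1^1)\cong B_3$ has infinitely many sixth roots (every conjugate of $T_{c_1}T_{c_2}$), so the power relation pins down the lift only up to conjugacy at best. An explicit identification really is required.

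The paper supplies it in two steps, both organised through the fundamental groupoid action rather than a direct isotopy. First it writes down a concrete homeomorphism $N$, the \emph{notch}, on a polygonal model of $\Sigma_g^1$ (odd $k$) or $\Sigma_g^2$ (even $k$), and checks on groupoid generators that $N_*$ commutes with the deck group (so $N_*\in\SAut(\vK)$) and that $\Pi(N_*)=H_*$; by the commutative square of Lemma~\ref{commutative_groupoid_diagram} this proves that $N$ is the boundary-fixing lift of the half twist $H$ (Lemmas~\ref{OddBurau} and~\ref{EvenBurau}). Second, Proposition~\ref{ChainTwist} shows $N=T_\vC$ for a specific $(k-1)$-chain $\vC$ by comparing the actions of $N$ and $T_\vC$ on a generating set of the groupoid and invoking the injectivity of $\Mod(\Sigma_g^m)\to\Aut(\vK)$. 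Both verifications are elementary but genuinely computational in the polygon model; this is the ``explicit isotopy'' you anticipated, made tractable by passing through the intermediate mapping class $N$.
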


When $k=2$, this embedding coincides with the standard geometric embedding.  The embedding when $k=3$ was independently arrived at by Kim-Song \cite{KS}.

\begin{figure}[t]
\begin{center}
\labellist \hair 1pt
	\pinlabel {$a_1$} at 270 130
	\pinlabel {$a_2$} at 410 50
	\pinlabel {$a_3$} at 540 147
	\pinlabel {$b_1$} at 35 115
	\pinlabel {$b_2$} at 410 205
	\pinlabel {$b_3$} at 700 130
    \endlabellist
\includegraphics[scale=0.35]{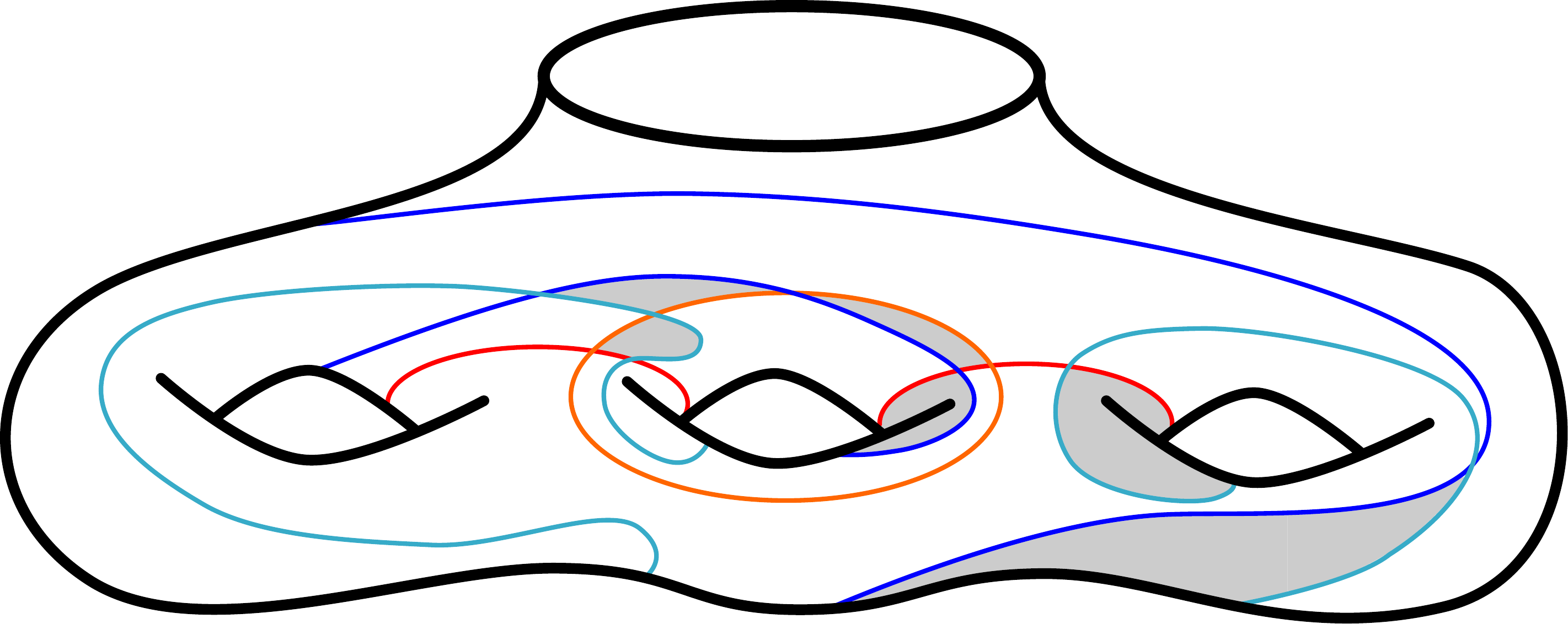}
\end{center}
\caption{The two $3$-chains $\vA = \{a_1,a_2,a_3\}$ and $\vB = \{b_1,b_2,b_3\}$ define chain twists $T_\vA := T_{a_1}T_{a_2}T_{a_3}$ and $T_\vB := T_{b_1}T_{b_2}T_{b_3}$ such that $T_\vA T_\vB T_\vA = T_\vB T_\vA T_\vB$.} 
\label{mesh_on_a_surface}
\end{figure}

Using these embeddings as inspiration, we define a combinatorial condition on two $k$-chains that imply their respective chain twists satisfy a braid relation.  The condition is defined in Section \ref{IntersectionData} and proven to be sufficient in Proposition \ref{ChainBraid}.  Figure \ref{mesh_on_a_surface} shows two 3-chains satisfying the combinatorial condition, so their chain twists satisfy a braid relation.

\subsection{Outline of the paper}
In Section \ref{LiftProject} we review some facts about lifting and projecting homeomorphisms and the Birman-Hilden theorem.  In Section \ref{section_groupoids} we review some basic facts about groupoids and prove some useful lemmas regarding fundamental groupoids and covering spaces, including a version of the Birman-Hilden theorem for automorphisms of groupoids (Lemma \ref{groupoid_birman_hilden}).  The main classification theorems are proved in Section \ref{section_classification}.  The non-geometric embeddings of the braid groups arising from the Burau covers are investigated in Section \ref{EBG}.  The paper concludes with a list of open questions relating to the braid group embeddings.

\subsection*{Acknowledgements}
We would like to thank Javier Aramayona, Paolo Bellingeri, Joan Birman, Tara Brendle, Adam Clay, David McKinnon, Doug Park, and Nick Salter for insightful conversations and support.  We also thank the anonymous referee for the numerous helpful comments and suggestions for improving the paper.  The second author would like to thank his supervisor Tara Brendle for her guidance and Doug Park's NSERC Discovery Grant for support to visit the University of Waterloo.

\section{The Birman-Hilden theorem with boundary}\label{LiftProject}
Let $\Sigma$ be an orientable compact surface, possibly with boundary $\partial \Sigma$, and finitely many marked points $\vB \subset \Sigma \sm \partial\Sigma$.  Denote by $\Homeo(\Sigma)$ and $\Homeo^+(\Sigma)$ the group of homeomorphisms of $\Sigma$ and the group of orientation-preserving homeomorphisms of $\Sigma$ respectively.  If $\vB$ appears in the argument of $\Homeo$, then we require the homeomorphisms to fix $\vB$ setwise.  If $\partial\Sigma$ appears, we require the homeomorphisms fix $\partial\Sigma$ pointwise.  Note that $\Homeo^+(\Sigma,\partial\Sigma) = \Homeo(\Sigma,\partial\Sigma)$.

Let $p:\wt \Sigma \to \Sigma$ be a regular cover with finite deck group $D \subset \Homeo^+(\wt \Sigma)$, possibly branched at $\vB \subset \Sigma \sm \partial \Sigma$.  A homeomorphism $f \in \Homeo^+(\wt \Sigma)$ is \emph{fibre-preserving} if whenever $p(x) = p(y)$, $pf(x) = pf(y)$ for all $x,y \in \wt \Sigma$.  Let $\SHomeo^+(\wt\Sigma)$ be the subgroup consisting of fibre-preserving homeomorphisms.  There is a homomorphism $\Pi: \SHomeo^+(\wt\Sigma) \to \Homeo(\Sigma,\vB)$ given by $\Pi(\tilde f)(x) = p\tilde f(\tilde x)$ for any $\tilde x$ such that $p(\tilde x) = x$.  It follows that $\ker(\Pi) = D$.  If $\Pi(\tilde f) = f$, then the square
\begin{center}
\begin{tikzcd}
    \wt \Sigma \arrow{r}{\tilde f} \arrow{d}{p} & \wt \Sigma \arrow{d}{p}\\
    \Sigma \arrow{r}{f} & \Sigma
\end{tikzcd}
\end{center}
commutes.  That is, $p\tilde f = fp$. Furthermore, it is true that $\SHomeo^+(\wt\Sigma)$ is the normaliser of $D$ in $\Homeo^+(\wt\Sigma)$.

We say a homeomorphism $f \in \Homeo^+(\Sigma,\vB)$ \emph{lifts} if there exists a homeomorphism $\tilde f \in \Homeo^+(\wt \Sigma)$ such that $p\tilde f = fp$.  Let $\LHomeo^+(\Sigma,\vB)$ be the subgroup of $\Homeo^+(\Sigma,\vB)$ consisting of homeomorphisms that lift.  Then the image of $\Pi$ is $\LHomeo^+(\Sigma,\vB)$ so $\SHomeo^+(\wt \Sigma)/D \cong \LHomeo^+(\Sigma,\vB)$.

\subsection*{Lifting and projecting with boundary}  Suppose $\wt\Sigma$ and $\Sigma$ have non-empty boundary $\partial\wt\Sigma$ and $\partial\Sigma$ respectively.  We wish to lift and project homeomorphisms that preserve the boundary pointwise, to homeomorphisms that preserve the boundary pointwise. To that end, let $\SHomeo(\wt \Sigma,\partial\wt\Sigma) = \SHomeo^+(\wt\Sigma) \cap \Homeo(\wt\Sigma,\partial\wt\Sigma)$.

\begin{prop}\label{SCentral}
$\SHomeo(\wt\Sigma,\partial\wt\Sigma) = C \cap \Homeo(\wt\Sigma,\partial\wt\Sigma)$ where $C$ is the centraliser of the deck group $D$ in $\Homeo(\wt\Sigma)$.
\end{prop}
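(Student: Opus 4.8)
The plan is to establish the two inclusions separately; the reverse one is formal, while the forward one rests on the rigidity of deck transformations on a connected cover.

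For the inclusion $C \cap \Homeo(\wt\Sigma,\partial\wt\Sigma) \subseteq \SHomeo(\wt\Sigma,\partial\wt\Sigma)$ I would simply observe that any homeomorphism centralising $D$ also normalises $D$, and recall from above that the normaliser of $D$ in $\Homeo^+(\wt\Sigma)$ is precisely $\SHomeo^+(\wt\Sigma)$. Since a homeomorphism fixing $\partial\wt\Sigma$ pointwise is orientation-preserving, intersecting with $\Homeo(\wt\Sigma,\partial\wt\Sigma)$ yields $C \cap \Homeo(\wt\Sigma,\partial\wt\Sigma) \subseteq \SHomeo^+(\wt\Sigma) \cap \Homeo(\wt\Sigma,\partial\wt\Sigma) = \SHomeo(\wt\Sigma,\partial\wt\Sigma)$.

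For the reverse inclusion, let $f \in \SHomeo(\wt\Sigma,\partial\wt\Sigma)$. Being in the normaliser of $D$, $f$ conjugates each $d \in D$ to another deck transformation $fdf^{-1}$, and it suffices to show $fdf^{-1} = d$ for every $d \in D$. I would evaluate at a point $\tilde x \in \partial\wt\Sigma$, which is non-empty by hypothesis. Deck transformations permute the boundary components of $\wt\Sigma$, so $d(\tilde x) \in \partial\wt\Sigma$; and since $f$ fixes $\partial\wt\Sigma$ pointwise we have $f^{-1}(\tilde x) = \tilde x$ and $f(d(\tilde x)) = d(\tilde x)$. Hence $(fdf^{-1})(\tilde x) = f(d(\tilde x)) = d(\tilde x)$, so the deck transformations $fdf^{-1}$ and $d$ agree at $\tilde x$.

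The concluding step — and the one requiring the most care, because of branching — is to upgrade ``agree at $\tilde x$'' to ``are equal''. Here I would pass to the unbranched cover $\wt\Sigma \sm p^{-1}(\vB) \to \Sigma \sm \vB$: its total space is connected, since deleting finitely many interior points does not disconnect a surface, and it contains $\tilde x$ because $\vB$ lies in the interior of $\Sigma$. Both $fdf^{-1}$ and $d$ restrict to deck transformations of this honest covering space, and by the standard uniqueness of lifts two deck transformations of a connected covering space that agree at one point coincide. Thus $fdf^{-1}$ and $d$ agree on $\wt\Sigma \sm p^{-1}(\vB)$, hence on all of $\wt\Sigma$ by continuity. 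Therefore $f$ centralises $D$, so $f \in C \cap \Homeo(\wt\Sigma,\partial\wt\Sigma)$, completing the proof. The only genuine obstacle is this branch-locus bookkeeping; everything else is formal.
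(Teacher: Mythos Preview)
Your proof is correct and follows essentially the same approach as the paper: both reduce to showing that a boundary-fixing normaliser element centralises $D$ by checking that $fdf^{-1}$ and $d$ agree at a boundary point, then using that deck transformations agreeing at an unbranched point coincide. The paper phrases the concluding step slightly more directly as ``$D$ acts freely on $\partial\wt\Sigma$'' (which already lies in the unbranched locus), avoiding your detour through the full punctured cover and continuity, but this is a cosmetic difference.
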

\begin{proof}
It suffices to show that any homeomorphism $\tilde f$ that fixes the boundary and is in the normaliser of $D$ is in the centraliser of $D$.  Let $\tilde x \in \partial\wt\Sigma$ and let $d \in D$.  It is clear then that $d(\tilde x) \in \partial\wt\Sigma$.  Since $\tilde f$ fixes the boundary pointwise we have $\tilde f^{-1} d \tilde f(\tilde x) = \tilde f^{-1}d(\tilde x) = d(\tilde x)$.  Since $\tilde f$ is in the normaliser of $D$, $\tilde f^{-1}d\tilde f \in D$.  Since the deck group acts freely on $\partial\wt\Sigma$ we have $\wt f^{-1}d \wt f = d$, completing the proof.
\end{proof}

Any fibre-preserving homeomorphism of $\wt \Sigma$ that fixes $\partial\wt\Sigma$ pointwise must project to a homeomorphism of $\Sigma$ that fixes $\partial\Sigma$ pointwise.  Since the only element of $D$ that fixes $\partial \wt\Sigma$ pointwise is the identity, restricting the domain of $\Pi$ gives us an injective homomorphism $\Pi:\SHomeo(\wt\Sigma,\partial\wt\Sigma) \to \LHomeo^+(\Sigma,\vB) \cap \Homeo(\Sigma,\partial\Sigma)$.

Define $\LHomeo(\Sigma,\partial\Sigma,\vB) = \Pi(\SHomeo(\wt\Sigma,\partial\wt\Sigma))$.  That is, the set of homeomorphisms of $\Sigma$ fixing $\partial\Sigma$ pointwise that lift to a homeomorphism of $\wt\Sigma$ fixing $\partial\wt\Sigma$ pointwise. 

While it is tempting to define $\LHomeo(\Sigma,\partial\Sigma,\vB)$ as $\LHomeo^+(\Sigma,\vB) \cap \Homeo(\Sigma,\partial\Sigma)$, in general there exist boundary preserving homeomorphisms of $\Sigma$ that lift to homeomorphisms of $\wt \Sigma$ that do not fix the boundary.

Let $\wt{\mathfrak P}:\Homeo(\wt\Sigma,\partial\wt\Sigma) \to \Mod(\wt\Sigma)$ and $\mathfrak P:\Homeo(\Sigma,\partial\Sigma,\vB)\to \Mod(\Sigma,\vB)$ be the natural quotient maps.  Define the \emph{symmetric mapping class group} and the \emph{liftable mapping class group} by $\SMod(\wt\Sigma) = \wt{\mathfrak P}(\SHomeo(\wt\Sigma,\partial\wt\Sigma))$ and $\LMod(\Sigma,\vB) = \mathfrak P(\LHomeo(\Sigma,\partial\Sigma,\vB))$ 
respectively.

\subsection{The Birman-Hilden Theorem}\label{section_birman_hilden}
Suppose $\wt\Sigma$ is closed with genus at least 2.  The Birman-Hilden theorem states that projecting homeomorphisms induces an isomorphism $\SMod(\wt\Sigma)/D \cong \LMod(\Sigma,\vB)$.  Furthermore, $\SMod(\wt\Sigma)$ is the normaliser of $D$ in $\Mod(\wt\Sigma)$. The Birman-Hilden theorem was first proved for solvable covers \cite{BH}, and later proved for all finite-sheeted, regular branched covers \cite{MH}.  A version of the Birman-Hilden theorem was proved by Winarski for possibly irregular, fully-ramified covers \cite{Winarski}, and by Aramayona, Leininger, and Souto for irregular unbranched covers \cite{ALS}.  We refer the reader to a survey by Margalit-Winarski of the Birman-Hilden theorem \cite{MW}.

When $\wt \Sigma$ and $\Sigma$ have non-empty boundary, the deck group $D$ is no longer a subgroup of $\Mod(\wt\Sigma)$ since no non-trivial element of $D$ fixes the boundary components pointwise.  In this case, the Birman-Hilden theorem takes the following form.

\begin{thm}[Birman-Hilden with boundary]\label{LModSMod}
Let $p:\wt \Sigma \to \Sigma$ be a finite-sheeted, regular covering space of compact surfaces with boundary, possibly branched at $\vB \subset \Sigma\sm\partial\Sigma$.  Then projecting homeomorphisms induces an isomorphism $\SMod(\wt\Sigma)\cong \LMod(\Sigma,\vB)$.
\end{thm}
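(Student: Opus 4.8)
The plan is to work at the level of homeomorphisms for as long as possible, exploiting the group isomorphism $\Pi\colon\SHomeo(\wt\Sigma,\partial\wt\Sigma)\to\LHomeo(\Sigma,\partial\Sigma,\vB)$ established above. By definition $\SMod(\wt\Sigma)$ and $\LMod(\Sigma,\vB)$ are the images of these two groups under the quotient maps $\wt{\mathfrak P}$ and $\mathfrak P$, so ``project and pass to isotopy classes'' descends to a well-defined isomorphism $\SMod(\wt\Sigma)\to\LMod(\Sigma,\vB)$ exactly when $\Pi$ carries the subgroup of homeomorphisms of $\wt\Sigma$ isotopic to the identity rel $\partial\wt\Sigma$ onto the subgroup of homeomorphisms of $\Sigma$ isotopic to the identity rel $\partial\Sigma$. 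Surjectivity of the projection map is then immediate, and the theorem reduces to the equivalence
\[
\tilde f\simeq\id\ \text{rel}\ \partial\wt\Sigma \quad\Longleftrightarrow\quad \Pi(\tilde f)\simeq\id\ \text{rel}\ \partial\Sigma, \qquad \tilde f\in\SHomeo(\wt\Sigma,\partial\wt\Sigma),
\]
the right-hand isotopy being through homeomorphisms preserving $\vB$. I would establish the two implications separately.

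For the implication $\Longleftarrow$, which gives injectivity of the projection map, I would lift isotopies. Put $\Sigma^\circ=\Sigma\sm\vB$ and $\wt\Sigma^\circ=\wt\Sigma\sm p^{-1}(\vB)$, so that $p$ restricts to an honest regular covering $\wt\Sigma^\circ\to\Sigma^\circ$ on which $D$ acts freely, with $\partial\wt\Sigma\subset\wt\Sigma^\circ$. Given an isotopy $\{f_t\}$ from $\Pi(\tilde f)$ to the identity, rel $\partial\Sigma$ and preserving $\vB$, fix $x_0\in\partial\Sigma$; since every $f_t$ fixes $x_0$ we have $(f_t)_*=(f_0)_*$ on $\pi_1(\Sigma^\circ,x_0)$, so every $f_t|_{\Sigma^\circ}$ is liftable. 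The standard lifting criterion applied to $\wt\Sigma^\circ\times[0,1]\to\Sigma^\circ$, $(\tilde x,t)\mapsto f_t(p(\tilde x))$, then produces a continuous path of homeomorphisms $\{\tilde f_t\}$ of $\wt\Sigma^\circ$ with $\tilde f_0=\tilde f|_{\wt\Sigma^\circ}$ and $p\tilde f_t=f_tp$; each $\tilde f_t$ extends over the branch points by the removable--singularity argument for the local model $z\mapsto z^e$, yielding an isotopy of $\wt\Sigma$. On a boundary circle of $\wt\Sigma$ covering a component of $\partial\Sigma$ with degree $e$, the restriction of $\tilde f_t$ covers the identity and is hence a rotation by an $e$-th root of unity; by continuity in $t$ this rotation is constant, and it is trivial at $t=0$ because $\tilde f\in\SHomeo(\wt\Sigma,\partial\wt\Sigma)$. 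So $\{\tilde f_t\}$ fixes $\partial\wt\Sigma$ pointwise throughout, and $\tilde f_1$ is a deck transformation fixing $\partial\wt\Sigma$, hence the identity; therefore $\tilde f\simeq\id$ rel $\partial\wt\Sigma$.

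For the implication $\Longrightarrow$, which gives well-definedness of the projection map, I expect the real work, since an isotopy of $\wt\Sigma$ to the identity need not be fibre-preserving and so cannot simply be projected. I would first dispatch the finitely many covers with $\wt\Sigma$ a disk or an annulus by hand: a finite group acting on a disk is conjugate to a group of rotations, so then $\Sigma$ is a once-marked disk and all the groups involved are trivial, while the annular cases reduce to the computation sketched in the introduction. In the remaining cases, let $\wh\Sigma$ and $\wh{\wt\Sigma}$ be the doubles of $\Sigma$ and $\wt\Sigma$ along their boundaries, closed surfaces carrying an extended finite regular branched cover $\wh p\colon\wh{\wt\Sigma}\to\wh\Sigma$ with the same deck group $D$, branched over the marked set $\wh\vB$ consisting of the two copies of $\vB$; since $\wt\Sigma$ is neither a disk nor an annulus, $\wh{\wt\Sigma}$ has genus at least $2$. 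A homeomorphism $\tilde f\in\SHomeo(\wt\Sigma,\partial\wt\Sigma)$ isotopic to the identity rel $\partial\wt\Sigma$ extends by the identity on the second copy to a homeomorphism of $\wh{\wt\Sigma}$ that is isotopic to the identity, is fibre-preserving and symmetric (here Proposition \ref{SCentral}, that $\tilde f$ centralises $D$, is used), and projects to the similar extension of $\Pi(\tilde f)$. Applying the classical Birman--Hilden theorem for closed surfaces \cite{MH} to $\wh p$, the projection map $\SMod(\wh{\wt\Sigma})\to\LMod(\wh\Sigma,\wh\vB)$ is well defined on mapping class groups, so this extension of $\Pi(\tilde f)$ is isotopic to the identity on $\wh\Sigma$ rel $\wh\vB$. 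Since the doubling homomorphism $\Mod(\Sigma,\partial\Sigma,\vB)\to\Mod(\wh\Sigma,\wh\vB)$ is injective, $\Pi(\tilde f)\simeq\id$ rel $\partial\Sigma$.

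The main obstacle is this last implication, and inside it the bookkeeping: making the doubling construction and the extension of the cover canonical, verifying that the doubled map is genuinely fibre-preserving, symmetric, and isotopically trivial, and invoking the injectivity of the doubling homomorphism on mapping class groups --- the only external ingredient beyond \cite{MH}. Doubling, rather than capping with disks, is what keeps $\wh{\wt\Sigma}$ hyperbolic outside the genuinely small cases of the disk and the annulus, and this is why the theorem holds with no exceptions.
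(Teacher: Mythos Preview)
The paper does not actually prove Theorem~\ref{LModSMod}: immediately after the statement it writes ``This result is well known and a proof can be found in the thesis of the second author \cite{McLeayThesis}.'' There is therefore no in-paper argument to compare against.

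Your proposal is correct and constitutes a complete proof. The reduction to the equivalence $\tilde f\simeq\id$ rel $\partial\wt\Sigma \iff \Pi(\tilde f)\simeq\id$ rel $\partial\Sigma$ is the right framing, and both implications are sound. The $\Longleftarrow$ direction via isotopy lifting is standard; your continuity argument for boundary preservation and the observation that $\tilde f_1$ is a deck transformation fixing $\partial\wt\Sigma$ (hence the identity) are exactly what is needed. For the $\Longrightarrow$ direction, doubling is a clean reduction to the closed Birman--Hilden theorem of \cite{MH}: the key point, which you identify correctly, is that the double $\wh{\wt\Sigma}$ has genus at least $2$ precisely when $\wt\Sigma$ is neither a disk nor an annulus, and those two cases are genuinely trivial. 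Your use of Proposition~\ref{SCentral} to see that the extension by the identity centralises $D$ (hence is fibre-preserving on the double) is the right mechanism.

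The only point worth tightening is the claim that the doubling homomorphism $\Mod(\Sigma,\partial\Sigma,\vB)\to\Mod(\wh\Sigma,\wh\vB)$ is injective. This follows from the standard result on kernels of inclusion-induced homomorphisms (see \cite[\S 3.6]{Primer}): the complement of $\Sigma$ in $\wh\Sigma$ is another copy of $(\Sigma,\vB)$, and this fails to be a disk with at most one marked point or an unmarked annulus exactly when $\wt\Sigma$ is not a disk or annulus---the cases you have already set aside. So the external ingredient is available and your bookkeeping is consistent.
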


This result is well known and a proof can be found in the thesis of the second author \cite{McLeayThesis}.  Rephrased, Theorem \ref{LModSMod} states that the isomorphism $\Pi:\SHomeo(\wt\Sigma,\partial\wt\Sigma) \to \LHomeo(\Sigma,\partial\Sigma,\vB)$ induces an isomorphism $\Pi:\SMod(\wt\Sigma) \to \LMod(\Sigma,\vB)$.

\section{The Fundamental Groupoid} \label{section_groupoids}
In this section we review some facts about groupoids before studying the automorphism groups of groupoids related to covering spaces.

\subsection{Groupoids}
Here we survey the relevant results about groupoids.  See the books \cite{Higgins} and \cite{Brown} for more details.

A \emph{groupoid} is a small category where every morphism is an isomorphism.  Equivalently, a groupoid $\vG$ is a disjoint collection of sets $\{G_{ij}\}_{i,j \in I}$ together with an associative partial operation $\cdot:G_{ij} \times G_{jk} \to G_{ik}$ such that
\begin{itemize}
\itemsep0em
\item For each $i \in I$ there is an identity $e_i \in G_{ii}$ such that $e_if = f$ and $ge_i = g$ for all $f$ and $g$ such that the products $e_if$ and $ge_i$ are defined, and
\item For each $g \in G_{ij}$ there is an inverse $g^{-1} \in G_{ji}$ such that $gg^{-1} = e_i$ and $g^{-1}g = e_j$.
\end{itemize}
We will call $I$ the \emph{object set} of $\vG$.  If $\abs{I} = 1$ (equivalently if the category has one object), then $\vG$ is a group. Define the {\it source} and \emph{target} maps, $s,t:\vG \to I$ by $s(g) =i$ and $t(g) = j$ for all $g \in G_{ij}$.

A groupoid is \emph{connected} if $G_{ij} \neq \emptyset$ for all $i,j \in I$.  Notice that $G_{ii}$ is a group for all $i \in I$, and if $\vG$ is connected then $G_{ii} \cong G_{jj}$ for all $i,j \in I$.  The groups $G_{ii}$ will be called \emph{vertex groups}.  From now on we will assume $\vG$ is a connected groupoid.

Fix an $i_0 \in I$.  For each $i \in I$ choose an element $\iota_i \in G_{i_0i}$ with $\iota_{i_0} = e_{i_0}$.  Then $\vG$ is generated by the vertex group $G_{i_0i_0}$ and $\{\iota_i\}_{i \in I}$.  In fact, every element in $G_{ij}$ is uniquely written as $\iota_i^{-1}g\iota_j$ for some $g \in G_{i_0i_0}$.  We call $\{\iota_i\}_{i \in I}$ a \emph{star} based at $i_0$.

A \emph{subgroupoid} $\vH < \vG$ is a collection of subsets $\{H_{ij} \subset G_{ij}\}_{i,j \in J}$ for some non-empty $J \subset I$ such that $\vH$ is a groupoid with the operation from $\vG$.  A subgroupoid is \emph{wide} if $J = I$.  A subgroupoid $\vH < \vG$ is \emph{normal} if $f^{-1}H_{ii}f \subset H_{jj}$ for all $f \in G_{ij}$.  It follows that normal subgroupoids of connected groupoids are wide, and $h \mapsto f^{-1}hf$ is an isomorphism of groups $H_{ii} \cong H_{jj}$.

Let $\vH$ be a connected normal subgroupoid of $\vG$.  Construct the \emph{quotient groupoid} $\vG/\vH$ to be a groupoid with one object, or a group, as follows.  Put an equivalence relation $\sim$ on $\vG$ by $a \sim b$ if there exists $x,y \in \vH$ such that $a = xby$.  The equivalence classes are called the \emph{cosets} of $\vH$ in $\vG$, and these are the elements of $\vG/\vH$.  Define an operation on the cosets by $[a][b] = [axb]$ for any $x \in \vH$ with $s(x) = t(a)$ and $t(x) = s(b)$.  This is a well defined group operation on $\vG/\vH$.

Although we will not need it, the quotient groupoid can be defined for disconnected normal subgroupoids of connected groupoids.  The only difference is that there is one object for each connected component of $\vH$ (see \cite[Chapter~12]{Higgins}).

\subsection{Automorphisms of groupoids} \label{automorphisms_of_groupoids}
Let $\vG$ and $\vH$ be groupoids with object sets $I$ and $J$ respectively.  A \emph{morphism} $\phi:\vG \to \vH$ is a functor from $\vG$ to $\vH$.  Explicitly, $\phi$ is a function $\hat \phi:I \to J$ together with functions $\hat \phi_{ij}:G_{ij} \to H_{\hat \phi(i)\hat \phi(j)}$ for all $i,j \in I$ such that $\hat \phi_{ij}(a)\hat \phi_{jk}(b) = \hat \phi_{ik}(ab)$ for all $a \in G_{ij}$ and $b \in G_{jk}$.  It follows that $\hat \phi_{ii}(e_i) = e_{\hat \phi(i)}$ and $\hat \phi_{ji}(g^{-1}) = \hat \phi_{ij}(g)^{-1}$ for all $i,j \in I$ and $g \in G_{ij}$.  Given a groupoid morphism $\phi$, we will abuse notation and denote the maps $\hat \phi$ and $\hat \phi_{ij}$ by $\phi$.  Note that $s(\phi(g)) = \phi(s(g))$ and $t(\phi(g)) = \phi(t(g))$ for all $g \in \vG$.

An \emph{automorphism} of $\vG$ is a morphism $\phi:\vG \to \vG$ with a two-sided inverse.  The set of automorphisms of $\vG$ forms a group under composition, denoted by $\Aut(\vG)$.

We now restrict our attention to connected groupoids with finite object set.  Let $G$ be a group and consider the semi-direct product $G^n \rtimes \Aut(G)$.  To set notation, the group operation on $G^n \rtimes \Aut(G)$ is given by
\[
((g_1,\ldots,g_n),\psi)((h_1,\ldots,h_n),\varphi) = ((\psi(h_1)g_1,\ldots\psi(h_n)g_n),\psi\varphi)
\]
for all $g_i,h_i \in G$ and $\psi,\varphi \in \Aut(G)$.

Define the \emph{pure automorphism group} of $\vG$ by
\[
\PAut(\vG) := \{\phi \in \Aut(\vG): \phi(i) = i \text{ for all }i \in I\}.
\]
Let $\vG$ be a connected groupoid with object set $I = \{0,1,\ldots,n\}$.  Let $G = G_{00}$ be the vertex group at $0 \in I$.  Choose a star $\{\iota_i\}_{i \in I} \subset \vG$ based at $0 \in I$ and let $g_0 = e_0 \in G$.
\begin{lem} \label{Paut_isomorphism}
The map $\theta:G^n \rtimes \Aut(G) \lra \PAut(\vG)$ given by 
\[
\theta(((g_1,\ldots,g_n),\psi))(\iota_i^{-1}a\iota_j) = \iota_i^{-1}g_i^{-1}\psi(a)g_j\iota_j
\]
is an isomorphism.
\end{lem}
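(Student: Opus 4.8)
The plan is to check that each element of $G^n\rtimes\Aut(G)$ really defines an object‑fixing automorphism of $\vG$, that $\theta$ is a homomorphism for the given semidirect‑product structure, and then to read off an explicit inverse; both injectivity and surjectivity will come from evaluating automorphisms on the chosen star $\{\iota_i\}_{i\in I}$ and on the vertex group $G=G_{00}$.

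First I would fix $x=((g_1,\ldots,g_n),\psi)$ and verify that $\iota_i^{-1}a\iota_j\mapsto \iota_i^{-1}g_i^{-1}\psi(a)g_j\iota_j$ is a morphism $\vG\to\vG$. It is well defined on each $G_{ij}$ precisely because every element of $G_{ij}$ is uniquely $\iota_i^{-1}a\iota_j$ with $a\in G$; it fixes every object; it sends $e_i=\iota_i^{-1}e_0\iota_i$ to $e_i$ (using $g_0=e_0$ and $\psi(e_0)=e_0$); and functoriality is the one‑line computation
\[
(\iota_i^{-1}g_i^{-1}\psi(a)g_j\iota_j)(\iota_j^{-1}g_j^{-1}\psi(b)g_k\iota_k)=\iota_i^{-1}g_i^{-1}\psi(ab)g_k\iota_k,
\]
which uses only that $\psi$ is a group homomorphism. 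Denote this morphism $\theta(x)$; for the moment it is just an object‑fixing endomorphism of $\vG$.

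Next I would compute, for $x=((g_i),\psi)$ and $y=((h_i),\varphi)$, that applying $\theta(y)$ and then $\theta(x)$ to $\iota_i^{-1}a\iota_j$ yields $\iota_i^{-1}g_i^{-1}\psi(h_i)^{-1}(\psi\varphi)(a)\psi(h_j)g_j\iota_j$, which is exactly $\theta(xy)(\iota_i^{-1}a\iota_j)$ for $xy=((\psi(h_i)g_i)_i,\psi\varphi)$. Since $\theta(((e_0,\ldots,e_0),\id))=\id_\vG$, this forces $\theta(x)\circ\theta(x^{-1})=\theta(x^{-1})\circ\theta(x)=\id_\vG$, so each $\theta(x)$ is genuinely an automorphism, hence lies in $\PAut(\vG)$, and $\theta\colon G^n\rtimes\Aut(G)\to\PAut(\vG)$ is a group homomorphism. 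For injectivity I would evaluate $\theta(x)$ on $\iota_i=\iota_0^{-1}e_0\iota_i$, getting $g_i\iota_i$, so $\theta(x)=\id_\vG$ forces $g_i=e_0$ for all $i$; evaluating on $a\in G_{00}$ then gives $\psi(a)=a$, so $\psi=\id$. For surjectivity, given $\phi\in\PAut(\vG)$, the restriction $\psi:=\phi|_{G_{00}}$ is an automorphism of $G$, and $\phi(\iota_i)\in G_{0i}$ so $\phi(\iota_i)=g_i\iota_i$ for a unique $g_i\in G$ with $g_0=e_0$; functoriality of $\phi$ then gives $\phi(\iota_i^{-1}a\iota_j)=\phi(\iota_i)^{-1}\phi(a)\phi(\iota_j)=\iota_i^{-1}g_i^{-1}\psi(a)g_j\iota_j$, i.e.\ $\phi=\theta(((g_1,\ldots,g_n),\psi))$, so $\theta$ is an isomorphism.

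I do not expect a serious obstacle: the argument is essentially bookkeeping, enabled by the normal‑form statement that every morphism of $\vG$ is uniquely $\iota_i^{-1}a\iota_j$. The one point demanding care is matching conventions — the nonabelian twist $((g_i),\psi)((h_i),\varphi)=((\psi(h_i)g_i)_i,\psi\varphi)$ is precisely what is produced by the conjugation that appears when $\psi$ is distributed over a product $h_i^{-1}\varphi(a)h_j$ in the composition computation, and one must keep straight on which side the $g_i$ multiply and where the inverses sit so that $\theta$ comes out covariant, rather than contravariant, for composition.
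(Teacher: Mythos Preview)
Your argument is correct: the normal form $\iota_i^{-1}a\iota_j$ lets you verify well-definedness and functoriality immediately, the homomorphism computation matches the paper's semidirect-product convention exactly, and evaluating on $\iota_i$ and on $G_{00}$ gives injectivity and surjectivity cleanly. The paper does not supply its own proof of this lemma but simply cites \cite[\S 3]{AM}; your direct verification is the standard one and is precisely what one would expect to find behind that citation.
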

Lemma \ref{Paut_isomorphism} is proved in \cite[\S 3]{AM}.  Note that the isomorphism $\theta$ depends on the choice of star.

Let $\vH < \vG$ be a normal subgroupoid.  If $\phi \in \Aut(\vG)$ is such that $\phi(\vH) \subset \vH$, then $\phi$ induces an automorphism $\ol\phi \in \Aut(\vG/\vH)$ by $\ol\phi([a]) = [\phi(a)]$.  Define the subgroup $\LAut_{\vH}(\vG) < \PAut(\vG)$ by
\[
\LAut_{\vH}(\vG) = \{\phi \in \PAut(\vG) : \phi(\vH) = \vH \text{ and }\ol\phi = \id \in \Aut(\vG/\vH)\}.
\]
Our goal is to prove, with certain restrictions on $\vG$ and $\vH$, that $\LAut_{\vH}(\vG)$ is finite-index in $\PAut(\vG)$.

As above, suppose that $\vG$ is a connected groupoid with object set $I = \{0,1,\ldots,n\}$ and let $G = G_{00}$ be the vertex group at $0 \in I$.  Let $\vH$ be a connected normal subgroupoid with vertex group $H = H_{00}$, which is a normal subgroup of $G$.

Define the subgroup $K < G^n \rtimes \Aut(G)$ by
\[
K = \{((g_1,\ldots,g_n),\psi) \in G^n \rtimes \Aut(G): \psi \in \LAut_H(G) \text{ and } g_i \in H \text{ for all $i$}\}.
\]
Here, $\LAut_H(G)$ is defined by considering a group as a groupoid with one object.

\begin{lem}\label{Laut}
Choose a star $\vS = \{\iota_i\}_{i \in I} \subset \vH$ based at $0 \in I$.  Consider the isomorphism $\theta:G^n \rtimes \Aut(G) \to \PAut(\vG)$ from Lemma \ref{Paut_isomorphism} defined by $\vS$.  Then $\theta(K) = \LAut_\vH(\vG)$.
\end{lem}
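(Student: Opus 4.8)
The plan is to prove the two inclusions $\theta(K) \subset \LAut_\vH(\vG)$ and $\LAut_\vH(\vG) \subset \theta(K)$ separately, using the explicit formula for $\theta$ from Lemma \ref{Paut_isomorphism}. The key structural observation is that, because the star $\vS = \{\iota_i\}$ is chosen to lie in $\vH$, the cosets of $\vH$ in $\vG$ are represented very cleanly: every element of $\vG$ can be written $\iota_i^{-1}a\iota_j$ with $a \in G$, and $\iota_i^{-1}a\iota_j \sim \iota_k^{-1}b\iota_\ell$ in $\vG/\vH$ if and only if $aH = bH$ in $G/H$ (since $\iota_i,\iota_j,\iota_k,\iota_\ell$ all lie in $\vH$ and hence represent the trivial coset). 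In particular the quotient map $\vG \to \vG/\vH$ sends $\iota_i^{-1}a\iota_j$ to $aH$ under the identification $\vG/\vH \cong G/H$, and an element $\phi \in \PAut(\vG)$ satisfies $\ol\phi = \id$ exactly when $\phi(\iota_i^{-1}a\iota_j)$ and $\iota_i^{-1}a\iota_j$ have the same image in $G/H$ for all $i,j,a$.

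First I would take $((g_1,\ldots,g_n),\psi) \in K$ and set $\phi = \theta(((g_1,\ldots,g_n),\psi))$, so $\phi(\iota_i^{-1}a\iota_j) = \iota_i^{-1}g_i^{-1}\psi(a)g_j\iota_j$. To see $\phi(\vH) = \vH$: a general element of $H_{ij}$ is $\iota_i^{-1}h\iota_j$ with $h \in H$, and since $g_i, g_j \in H$ and $\psi \in \LAut_H(G)$ preserves $H$, we get $g_i^{-1}\psi(h)g_j \in H$, so $\phi(\iota_i^{-1}h\iota_j) \in H_{ij} \subset \vH$; applying the same argument to $\phi^{-1}$ (which is $\theta$ of the inverse element of $K$, again in $K$) gives equality. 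To see $\ol\phi = \id$: the image of $\phi(\iota_i^{-1}a\iota_j)$ in $G/H$ is $g_i^{-1}\psi(a)g_j H = \psi(a)H$ (using $g_i,g_j \in H$), and this equals $aH$ because $\psi \in \LAut_H(G)$ means $\ol\psi = \id$ on $G/H$. Hence $\phi \in \LAut_\vH(\vG)$, giving $\theta(K) \subset \LAut_\vH(\vG)$.

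For the reverse inclusion, take $\phi \in \LAut_\vH(\vG)$ and write $\phi = \theta(((g_1,\ldots,g_n),\psi))$ for a unique $((g_1,\ldots,g_n),\psi) \in G^n \rtimes \Aut(G)$, using that $\theta$ is an isomorphism. I must show $\psi \in \LAut_H(G)$ and each $g_i \in H$. Evaluating $\phi$ on the vertex group $G_{00} = G$: by the formula $\phi(a) = \phi(\iota_0^{-1}a\iota_0) = g_0^{-1}\psi(a)g_0 = \psi(a)$ since $g_0 = e_0$. Because $\phi(\vH) = \vH$ and $\phi$ preserves objects, $\phi$ restricts to an automorphism of $H_{00} = H$, so $\psi(H) = H$; and $\ol\phi = \id$ applied to elements of $G = G_{00}$ forces $\psi(a)H = aH$ for all $a$, i.e. $\ol\psi = \id$ on $G/H$, so $\psi \in \LAut_H(G)$. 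For the $g_i$: apply $\phi$ to the star element $\iota_i \in H_{0i}$. We have $\iota_i = \iota_0^{-1}e_0\iota_i$, so $\phi(\iota_i) = g_0^{-1}\psi(e_0)g_i\iota_i = g_i\iota_i$. Since $\phi(\vH) = \vH$ and $\iota_i \in \vH$, we get $g_i\iota_i \in H_{0i}$, and as $\iota_i \in H_{0i}$ this forces $g_i \in H$. Thus $((g_1,\ldots,g_n),\psi) \in K$, giving $\LAut_\vH(\vG) \subset \theta(K)$ and completing the proof.

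The main point requiring care — though it is not really an obstacle so much as the crux of the argument — is exploiting that the star lies inside $\vH$: this is precisely what makes the coset structure of $\vG/\vH$ restrict so transparently along the star elements and along the vertex group at $0$, and without it the formula relating $\psi$ to $\ol\phi$ would be contaminated by the $\iota_i$'s. Everything else is bookkeeping with the semidirect-product multiplication and the definitions of $\LAut_H$ and $\PAut$.
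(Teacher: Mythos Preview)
Your proof is correct and follows essentially the same route as the paper: both directions are checked by applying the explicit formula for $\theta$ to elements written as $\iota_i^{-1}a\iota_j$, using that the star lies in $\vH$ so that $[\iota_i^{-1}a\iota_j] = [a]$ in $\vG/\vH$. The paper's version is slightly terser (for instance it gets $\psi \in \LAut_H(G)$ directly from $\ol{\theta(k)}([g]) = [\psi(g)] = [g]$ without separately verifying $\psi(H)=H$, and it tests $\theta(k)$ on $h\iota_j$ rather than just $\iota_j$), but the arguments are the same in substance.
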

\begin{proof}
Let $k = ((h_1,\ldots,h_n),\psi) \in K$, let $h_0 = e_0 \in H$ and let $\iota_i^{-1}g\iota_j$ be an arbitrary element in $\vG$.  Then $\theta(k)(\iota_i^{-1}g\iota_j) = \iota_i^{-1}h_i^{-1}\psi(g)h_j\iota_j$. Since $\psi(g) \in H$ if and only if $g \in H$, we have $\theta(k)(\iota_i^{-1}g\iota_j) \in \vH$ if and only if $g \in H$.  Therefore $\theta(k)(\vH) = \vH$.  In $\vG/\vH$,  $[\iota_i^{-1}g\iota_j] = [g]$ for any $g \in G$.  Therefore $\ol{\theta(k)}([\iota_i^{-1}g\iota_j]) = \ol{\theta(k)}([g]) = [\psi(g)]$.  Since $\psi \in \LAut_H(G)$, $[\psi(g)] = [g]$ implying $\theta(k) \in \LAut_\vH(\vG)$.

Conversely, suppose $k = ((g_1,\ldots,g_n),\psi) \in G^n \rtimes \Aut(G)$ is such that $\theta(k) \in \LAut_\vH(\vG)$.  We have $\ol{\theta(k)}([g]) = [\psi(g)] = [g]$ for all $g \in G$, so $\psi \in \LAut_H(G)$.  Let $h\iota_j$ be an arbitrary element of $H_{0j}$.  Then $\theta(k)(h\iota_j) = \psi(h)g_j\iota_j$.  For $\theta(k)(h\iota_j)$ to be in $\vH$, we must have $g_j \in H$.  Therefore $k \in K$, completing the proof.
\end{proof}

\begin{lem}\label{LAut_finite_index}
Let $\vG$ be a connected groupoid with object set $I = \{0,1,\ldots,n\}$ and $\vH$ a connected normal subgroupoid.  Let $G = G_{00}$ and $H = H_{00}$ as above.  Suppose $G$ is finitely generated and $H$ is finite-index in $G$.  Then $\LAut_{\vH}(\vG)$ is finite-index in $\PAut(\vG)$.
\end{lem}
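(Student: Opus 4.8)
The plan is to route the statement through Lemmas~\ref{Paut_isomorphism} and~\ref{Laut}, turning it into a question about the index of a subgroup of a semidirect product, and then to compute that index by elementary means. Since $\vH$ is connected and, being a normal subgroupoid of the connected groupoid $\vG$, wide, we may choose a star $\vS=\{\iota_i\}_{i\in I}$ based at $0$ with $\vS\subset\vH$. Feeding this star into Lemma~\ref{Paut_isomorphism} produces an isomorphism $\theta\colon G^n\rtimes\Aut(G)\to\PAut(\vG)$, and Lemma~\ref{Laut} then identifies $\LAut_{\vH}(\vG)$ with $\theta(K)$, where $K=\{((g_1,\dots,g_n),\psi):\psi\in\LAut_H(G),\ g_i\in H\ \text{for all }i\}$. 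So it is enough to prove that $K$ has finite index in $\Gamma:=G^n\rtimes\Aut(G)$.

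Next I would compute $[\Gamma:K]$ using the normal base subgroup $N=G^n\trianglelefteq\Gamma$. Straight from the definition of $K$ one has $K\cap N=H^n$ (these are precisely the elements of $K$ whose automorphism component is trivial), and the image of $K$ under the projection $\pi\colon\Gamma\to\Gamma/N=\Aut(G)$ is $\LAut_H(G)$ (the coordinate conditions $g_i\in H$ impose nothing on $\psi$). The tower law $[\Gamma:K]=[\Gamma:KN]\,[KN:K]$, together with $[KN:K]=[N:N\cap K]$ and $[\Gamma:KN]=[\Gamma/N:KN/N]$, then yields
\[
[\PAut(\vG):\LAut_{\vH}(\vG)]=[\Gamma:K]=[\Aut(G):\LAut_H(G)]\cdot[G:H]^n .
\]
Since $H$ is finite-index in $G$ by hypothesis, the factor $[G:H]^n$ is finite, and everything reduces to showing $[\Aut(G):\LAut_H(G)]<\infty$.

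For this last point, let $A=\{\psi\in\Aut(G):\psi(H)=H\}$ be the setwise stabiliser of $H$ in $\Aut(G)$. Because $G$ is finitely generated it has only finitely many subgroups of index $[G:H]$, and $\Aut(G)$ permutes this finite set; as $A$ is the stabiliser of the point $H$, it has finite index in $\Aut(G)$. Moreover $H\trianglelefteq G$ — normality of $\vH$ forces $H=H_{00}$ to be normal in $G=G_{00}$ — so reduction modulo $H$ gives a homomorphism $A\to\Aut(G/H)$ whose kernel is exactly $\LAut_H(G)$; since $G/H$ is a finite group, $\Aut(G/H)$ is finite, whence $[A:\LAut_H(G)]<\infty$. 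Multiplying the two indices gives $[\Aut(G):\LAut_H(G)]<\infty$, which completes the argument. The only genuinely non-formal ingredient is the classical fact (M.~Hall) that a finitely generated group has finitely many subgroups of each finite index — this is precisely where the finite-generation hypothesis enters, and it is the step I would flag as the crux; the rest is bookkeeping with the semidirect-product structure, the two points worth checking carefully being the identities $K\cap N=H^n$ and $\pi(K)=\LAut_H(G)$.
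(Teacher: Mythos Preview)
Your proof is correct and follows the same overall strategy as the paper: both reduce via Lemma~\ref{Laut} to showing $K$ has finite index in $G^n\rtimes\Aut(G)$, and both ultimately rest on the fact that $\LAut_H(G)$ has finite index in $\Aut(G)$ when $G$ is finitely generated and $H$ is finite-index. The paper computes the index by directly characterising the right cosets of $K$ (two elements lie in the same coset iff their $G/H$-coordinates agree and their automorphism parts lie in the same $\LAut_H(G)$-coset), whereas you obtain the same formula $[\Gamma:K]=[\Aut(G):\LAut_H(G)]\cdot[G:H]^n$ via the tower law and the normal subgroup $N=G^n$; these are two packagings of the same computation. The one substantive addition in your write-up is that you actually \emph{prove} the finite-index fact about $\LAut_H(G)$ (via M.~Hall's theorem and the reduction map $A\to\Aut(G/H)$), which the paper simply asserts.
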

\begin{proof}
By Lemma \ref{Laut}, it suffices to show that $K$ is finite-index in $G^n \rtimes \Aut(G)$.  It is easily checked that $((g_1,\ldots,g_n),\psi)$ and $((h_1,\ldots,h_n),\varphi)$ are in the same right coset of $K$ if and only if $[h_i] = [g_i]$ in $G/H$ for all $i$ and $\psi$ and $\varphi$ are in the same right coset of $\LAut_H(G)$ in $\Aut(G)$.  The result then follows from the fact that if $G$ is finitely generated and $H$ is finite-index in $G$, $\LAut_H(G)$ is finite-index in $\Aut(G)$.  
\end{proof}

\subsection{The fundamental groupoid}\label{fundamental_groupoid}

The groupoids of interest in this paper will be fundamental groupoids of finite-type surfaces (that is, surfaces with finitely generated fundamental group) with boundary. We will briefly state the definition and some properties without proof that will be useful later on.  For a full treatment, see \cite[Chapter~6]{Brown} or \cite[Chapter~6]{Higgins}.  

Let $X$ be a topological space and $A \subset X$ a subset.  The fundamental groupoid $\pi_1(X,A)$ is the set of homotopy classes of paths $\delta:([0,1],\{0,1\}) \to (X,A)$ relative to the endpoints.  Equipped with concatenation of paths as the partial operation, $\pi_1(X,A)$ is a groupoid with object set $A$.  The source and target maps are given by $s([\delta]) = \delta(0)$ and $t([\delta]) = \delta(1)$.  When $A = \{x\}$, we recover the fundamental group $\pi_1(X,x)$.  It is helpful to think of the fundamental groupoid as a fundamental group with multiple basepoints.

Like the fundamental group, the fundamental groupoid provides a functor from the category of pairs of topological spaces to groupoids.  In particular, if $f:X \to X$ is a homeomorphism such that $f(A) = A$, then $f_*:\pi_1(X,A) \to \pi_1(X,A)$ is an automorphism of the groupoid $\pi_1(X,A)$.  Furthermore, if $f,g:(X,A) \to (Y,B)$ are homotopic relative to $A$, then $f_*=g_*:\pi_1(X,A) \to \pi_1(Y,B)$.

We now shift our focus to covering spaces.  For a groupoid $\vG$ with object set $I$, define the sets $S(i) := \{g \in \vG:s(g) = i\}$ and $T(i) := \{g \in \vG: t(g) = i\}$.  The next lemma will be useful throughout the rest of the paper.

\begin{lem}\label{locally_bijective}
Let $p:\wt X \to X$ be a covering space, $A \subset X$ a subset. Consider the induced groupoid morphism $p_*:\pi_1(\wt X,p^{-1}(A)) \to \pi_1(X,A)$.  The restricted maps $p_*:S(x) \to S(p(x))$ and $p_*:T(x) \to T(p(x))$ are bijections for all $x \in p^{-1}(A)$.
\end{lem}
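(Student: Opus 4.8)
The plan is to exploit the classical path-lifting property of covering spaces. Fix $x \in p^{-1}(A)$ and set $y = p(x) \in A$. I first treat the source map $p_* : S(x) \to S(p(x))$. An element of $S(x)$ is the homotopy class $[\wt\delta]$ of a path $\wt\delta$ in $\wt X$ starting at $x$ and ending at some point of $p^{-1}(A)$; an element of $S(y)$ is the homotopy class $[\delta]$ of a path $\delta$ in $X$ starting at $y$ and ending at some point of $A$. The map $p_*$ sends $[\wt\delta]$ to $[p\circ\wt\delta]$, which is well-defined and lands in $S(y)$ since $p(x)=y$ and $p$ carries $p^{-1}(A)$ into $A$.

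\emph{Surjectivity.} Given $[\delta] \in S(y)$ with $\delta(0) = y$, the path-lifting property of the covering $p$ produces a unique lift $\wt\delta$ of $\delta$ with $\wt\delta(0) = x$. Its endpoint $\wt\delta(1)$ satisfies $p(\wt\delta(1)) = \delta(1) \in A$, so $\wt\delta(1) \in p^{-1}(A)$ and hence $[\wt\delta] \in S(x)$ with $p_*[\wt\delta] = [\delta]$.

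\emph{Injectivity.} Suppose $[\wt\delta_1], [\wt\delta_2] \in S(x)$ with $p_*[\wt\delta_1] = p_*[\wt\delta_2]$, i.e. $p\circ\wt\delta_1 \simeq p\circ\wt\delta_2$ rel endpoints. In particular these two paths have the same endpoint, so $\wt\delta_1$ and $\wt\delta_2$ have the same initial point $x$ and the same terminal point (it is the lift of the common endpoint starting from the common endpoint lift, by uniqueness of lifts). A rel-endpoint homotopy $H$ between $p\circ\wt\delta_1$ and $p\circ\wt\delta_2$ is a map of a square into $X$; by the homotopy-lifting property it lifts to a homotopy $\wt H$ into $\wt X$ with prescribed initial edge $\wt\delta_1$, and uniqueness of lifts of the constant-in-$t$ boundary edges forces $\wt H$ to be a rel-endpoint homotopy from $\wt\delta_1$ to $\wt\delta_2$. (Here one uses that the restriction of $\wt H$ to the two vertical sides is a lift of a constant path, hence constant.) Therefore $[\wt\delta_1] = [\wt\delta_2]$, proving $p_*$ is injective on $S(x)$.

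The statement for $T(x) \to T(p(x))$ follows by the symmetric argument, or simply by applying the inversion map $g \mapsto g^{-1}$, which is a bijection $T(x) \to S(x)$ commuting with $p_*$ (since $p_*(g^{-1}) = p_*(g)^{-1}$). The only mildly delicate point is the injectivity argument, where one must check that the lifted homotopy genuinely fixes endpoints; this is the standard uniqueness-of-lifts bookkeeping and is the one place to be careful, but it presents no real obstacle.
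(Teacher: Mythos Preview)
Your proof is correct and follows exactly the approach the paper indicates: the paper simply states that the lemma ``follows from the path and homotopy lifting properties for covering spaces, and the details are left to the reader,'' and your argument supplies precisely those details. One small presentational note: your assertion that $\wt\delta_1$ and $\wt\delta_2$ share a terminal point is not yet justified at the moment you state it, but it falls out of the homotopy-lifting step you carry out immediately afterward (the top edge of $\wt H$ is the unique lift of $p\circ\wt\delta_2$ starting at $x$, hence equals $\wt\delta_2$), so there is no actual gap.
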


The lemma follows from the path and homotopy lifting properties for covering spaces, and the details are left to the reader. It is worth noting that Lemma \ref{locally_bijective} says that $p_*$ is a covering morphism (see \cite[Section 10.2]{Brown}).

Let $p:\wt X \to X$ be a finite-sheeted, regular covering space with deck group $D$.  Let $A = \{x_1,\ldots,x_k\}\subset X$, and $B = p^{-1}(A) \subset \wt X$. For each $i \in \{1,\ldots,k\}$, choose $\tilde x_i \in p^{-1}(x_i)$.  Let $\wt A = \{\tilde x_1,\ldots,\tilde x_k\}$.

 Define the groupoids
\[
\vG := \pi_1(X,A), \quad \vH := p_*(\pi_1(\wt X, \wt A)), \quad \vK := \pi_1(\wt X, B).
\]
Since $p$ is a regular cover, $\vH$ is a normal subgroupoid of $\vG$.  Recall the definition of $\LAut_\vH(\vG)$ from Section \ref{automorphisms_of_groupoids}.  It will be useful for us to identify when two elements of $\vG$ are in the same coset of $\vH$.

\begin{lem}\label{same_coset}
Let $g_1,g_2 \in \vG$ and let $\tilde g_1,\tilde g_2 \in \vK$ be the unique elements such that $s(\tilde g_1), s(\tilde g_2) \in \wt A$.  Then $t(\tilde g_1) = d_1(\tilde x_i)$ and $t(\tilde g_2) = d_2(\tilde x_j)$ for some $\tilde x_i,\tilde x_j \in \wt A$ and some $d_1,d_2 \in D$.  The elements $g_1$ and $g_2$ are in the same coset of $\vH$ if and only if $d_1 = d_2$.
\end{lem}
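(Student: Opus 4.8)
The plan is to encode the relevant deck transformation as the value of a ``monodromy'' morphism and then recognise $\vH$ as its kernel. Regard the deck group $D$ as a groupoid with one object. I would define $\mu\colon\vG\to D$ as follows: for $g\in\vG$ with $t(g)=x_b$, let $\tilde g\in\vK$ be the unique lift with $s(\tilde g)\in\wt A$ provided by Lemma~\ref{locally_bijective}; since $p$ is regular, $t(\tilde g)\in p^{-1}(x_b)=D\cdot\tilde x_b$, and since $D$ acts freely there is a unique $\mu(g)\in D$ with $t(\tilde g)=\mu(g)\,\tilde x_b$. In the notation of the statement, $d_1=\mu(g_1)$ and $d_2=\mu(g_2)$, so it suffices to prove that $g_1$ and $g_2$ lie in the same coset of $\vH$ if and only if $\mu(g_1)=\mu(g_2)$.

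First I would check that $\mu$ is a well-defined groupoid morphism. Independence of the chosen representative path is the homotopy lifting property. For functoriality, given a composable pair $g'g''$ with $t(g')=s(g'')=x_c$, lift $g'$ starting in $\wt A$ to an arc ending at $\mu(g')\,\tilde x_c$; the lift of $g''$ starting at $\mu(g')\,\tilde x_c$ is the $\mu(g')$-translate of the lift of $g''$ starting at $\tilde x_c$ (equivariance of path lifting under $D$, using $p\circ d=p$), hence ends at $\mu(g')\mu(g'')\,\tilde x_b$; concatenating gives $\mu(g'g'')=\mu(g')\mu(g'')$, and $\mu$ sends each identity to $\id$.

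Next I would identify $\ker\mu:=\{g\in\vG:\mu(g)=\id\}$ with $\vH$. If $g=p_*(\tilde h)$ with $\tilde h\in\pi_1(\wt X,\wt A)$, then $\tilde h$ is the distinguished lift of $g$ and it ends in $\wt A$, so $\mu(g)=\id$; conversely $\mu(g)=\id$ means the distinguished lift of $g$ ends at a point of $\wt A$, so $g\in p_*(\pi_1(\wt X,\wt A))=\vH$. The lemma then follows formally: if $g_1=xg_2y$ with $x,y\in\vH$ then $\mu(g_1)=\mu(x)\mu(g_2)\mu(y)=\mu(g_2)$; conversely, if $\mu(g_1)=\mu(g_2)$, use that $\vH$ is connected and wide to choose $x\in\vH$ from $s(g_1)$ to $s(g_2)$ and set $y:=g_2^{-1}x^{-1}g_1$, so that $g_1=xg_2y$ and $\mu(y)=\mu(g_2)^{-1}\mu(g_1)=\id$, whence $y\in\vH$ and $g_1$, $g_2$ share a coset.

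There is no substantial obstacle: the argument is routine once $\mu$ is set up. The only points needing care are that $\mu$ genuinely takes values in $D$ — which relies on $D$ acting freely and transitively on each fibre, i.e.\ on regularity of the cover — and the $D$-equivariance of path lifting used to show $\mu$ is a functor.
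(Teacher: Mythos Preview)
Your proof is correct, but it takes a somewhat different route from the paper's. The paper argues the two implications directly and by hand: for $d_1=d_2\Rightarrow$ same coset, it picks any $z\in\pi_1(\wt X,\wt A)$ from $s(\tilde g_1)$ to $s(\tilde g_2)$, sets $y=d^{-1}(\tilde g_2^{-1}z^{-1}\tilde g_1)\in\pi_1(\wt X,\wt A)$, and reads off $g_1=p_*(z)\,g_2\,p_*(y)$; for the converse it lifts $g_1=zg_2y$ with $z,y\in\vH$ using Lemma~\ref{locally_bijective} and observes $\tilde g_1=\tilde z\,\tilde g_2\,d_2(\tilde y)$, so $t(\tilde g_1)\in d_2\cdot\wt A$. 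Your approach instead packages this as a groupoid morphism $\mu:\vG\to D$ with $\ker\mu=\vH$, after which the statement is the formal fact that two elements share a coset of a connected normal subgroupoid iff they have the same image in the quotient.

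The paper's argument is shorter and needs no auxiliary structure. Your version is more conceptual: it is effectively proving $\vG/\vH\cong D$ via monodromy, which clarifies why the deck transformation is the right invariant and dovetails nicely with the subsequent proof of Lemma~\ref{groupoid_birman_hilden}. The essential inputs are the same in both---regularity of the cover (so $D$ acts freely and transitively on fibres) and $D$-equivariance of path lifting---so neither approach is doing more work than the other; they just organise it differently.
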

\begin{proof}
Suppose $d_1 = d_2$, and let $d = d_1$.  Choose any $z \in \pi_1(\wt X, \wt A)$ such that $s(z) = s(\tilde g_1)$ and $t(z) = s(\tilde g_2)$.  Let $y = d^{-1}(\tilde g_2^{-1}z^{-1}\tilde g_1)$.  Note $y \in \pi_1(\wt X,\wt A)$.  Then $\tilde g_1 = z\tilde g_2d(y)$ and $g_1 = p_*(z)g_2p_*(y)$ so $g_1$ and $g_2$ are in the same coset of $\vH$.

Conversely, suppose $g_1 = zg_2y$ for some $z,y \in \vH$.  Let $\tilde z$ and $\tilde y$ be such that $p_*(\tilde z) = z$, $p_*(\tilde y) = y$, and $s(\tilde z),t(\tilde z),s(\tilde y),t(\tilde y) \in \wt A$.  Then by Lemma \ref{locally_bijective}, $\tilde g_1 = \tilde z\tilde g_2 d_2(\tilde y)$.  Therefore $t(\tilde g_1) = d_2(t(\tilde y))$, completing the proof.
\end{proof}

Since $D$ acts freely on $B$, $D$ injects into $\Aut(\vK)$.  Abusing notation, we will identify $D$ with its image in $\Aut(\vK)$.  Define the group
\[
\SAut(\vK) := \{\phi \in \PAut(\vK): [\phi,d] = 1 \text{ for all }d \in D\},
\]
that is, $\SAut (\vK)$ is the intersection of $\PAut(\vK)$ with the centraliser of $D$ in $\Aut(\vK)$. Define a map $\Pi:\SAut(\vK) \to \LAut_\vH(\vG)$ by $\Pi(\phi)(g) = p_*\phi(\tilde g)$ where $\tilde g \in p_*^{-1}(g)$ is any choice of lift of $g$. The next lemma is a kind of Birman-Hilden theorem for groupoid automorphisms.  It will be of particular importance in Sections \ref{section_classification} and \ref{EBG}.

\begin{lem}\label{groupoid_birman_hilden}
The group homomorphism
\[
\Pi : \SAut(\vK)  \to \LAut_\vH(\vG) 
\]
is an isomorphism.
\end{lem}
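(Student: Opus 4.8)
The plan is to establish that $\Pi$ is a well-defined homomorphism, construct an explicit two-sided inverse, and then check the two composites are the identity. Most of the work is in the first and last parts; the inverse itself is forced on us by Lemma \ref{locally_bijective}.

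First I would check that $\Pi$ is well-defined. Given $\phi \in \SAut(\vK)$ and $g \in \vG$, pick any lift $\tilde g \in p_*^{-1}(g)$; I must show $p_*\phi(\tilde g)$ is independent of the choice of lift. Two lifts of $g$ with source in $B$ differ by precomposition and postcomposition with deck transformations — more precisely, if $\tilde g$ and $\tilde g'$ both satisfy $p_*(\tilde g)=p_*(\tilde g')=g$, then $\tilde g' = d_1 \tilde g d_2$ for suitable $d_1, d_2 \in D$ (using that $D$ acts transitively on fibres and the source/target bijections of Lemma \ref{locally_bijective}). Since $\phi$ commutes with every element of $D$ and $p_* \circ d = p_*$, we get $p_*\phi(\tilde g') = p_*\phi(\tilde g)$. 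The same computation shows $\Pi(\phi)$ is a groupoid morphism (it respects composition because lifts concatenate), and that it is pure since $\phi$ is pure and $p_*$ sends $\wt A$-objects to $A$-objects. That $\Pi(\phi)$ actually lands in $\LAut_\vH(\vG)$ requires showing $\Pi(\phi)(\vH)=\vH$ and $\ol{\Pi(\phi)}=\id$: the first follows because $\phi$ preserves $\pi_1(\wt X,\wt A)$ up to the $D$-action hence $p_*\phi$ preserves $\vH = p_*(\pi_1(\wt X,\wt A))$, and the second is exactly Lemma \ref{same_coset} — the target of the canonical lift of $g$ and of $\Pi(\phi)(g)$ lie in the same $D$-orbit because $\phi$ is pure and $D$-equivariant, so $d_1 = d_2$ in the notation of that lemma. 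Finally $\Pi$ is a homomorphism because lifting is functorial: a lift of $g_1 g_2$ is a concatenation of compatible lifts of $g_1$ and $g_2$, and $\phi$ respects concatenation.

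Next I would construct the inverse $\Lambda : \LAut_\vH(\vG) \to \SAut(\vK)$. Given $\psi \in \LAut_\vH(\vG)$ and $\tilde g \in \vK$, set $\Lambda(\psi)(\tilde g)$ to be the unique lift of $\psi(p_*(\tilde g))$ with the same source as $\tilde g$ — this exists and is unique by the $S(x)$-bijection of Lemma \ref{locally_bijective}. One checks $\Lambda(\psi)$ is a groupoid automorphism (composition and inverses lift), that it is pure (because $\psi$ fixes all objects of $\vG$, so lifts preserve fibres; here one uses $\psi(\vH)=\vH$ and $\ol\psi = \id$ to see that $\Lambda(\psi)$ actually fixes each object of $B$, not merely permutes fibres — this is the crux and I address it below), and that it commutes with $D$: for $d \in D$, both $\Lambda(\psi)(d\tilde g)$ and $d\Lambda(\psi)(\tilde g)$ are lifts of $\psi(p_*(\tilde g))$ with source $d(s(\tilde g))$, hence equal by uniqueness. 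Then $\Pi \circ \Lambda = \id$ and $\Lambda \circ \Pi = \id$ follow from the uniqueness clauses in Lemma \ref{locally_bijective}: applying $p_*$ to $\Lambda(\psi)(\tilde g)$ recovers $\psi(p_*(\tilde g))$ by construction, and lifting $\Pi(\phi)(g)$ back with the prescribed source recovers $\phi(\tilde g)$ since $\phi(\tilde g)$ is itself such a lift.

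I expect the main obstacle to be showing that $\Lambda(\psi)$ is genuinely \emph{pure}, i.e. fixes every object $\tilde x \in B$ rather than just permuting each fibre $p^{-1}(x_i)$. A priori, lifting a loop at $x_i$ based at $\tilde x_i$ could end at $d(\tilde x_i)$ for some nontrivial $d \in D$, so $\Lambda(\psi)$ would move $\tilde x_i$. The condition $\ol\psi = \id \in \Aut(\vG/\vH)$ is precisely what rules this out: by Lemma \ref{same_coset}, $\psi(g)$ and $g$ being in the same coset of $\vH$ for all $g$ forces the deck-transformation discrepancy between the canonical lift of $g$ and the lift of $\psi(g)$ to be trivial, so targets of lifts are preserved on the nose. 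Conversely the same lemma is what makes $\Pi(\phi)$ land in $\LAut_\vH$ rather than merely $\PAut$. So the heart of the proof is a careful bookkeeping of deck-transformation discrepancies via Lemma \ref{same_coset}, matching the $\ol\phi = \id$ condition on one side with the $D$-equivariance-plus-purity condition on the other.
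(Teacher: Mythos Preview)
Your proposal is correct and follows the same approach as the paper: check $\Pi$ is well-defined and lands in $\LAut_\vH(\vG)$, build the inverse by source-prescribed lifting (Lemma \ref{locally_bijective}), and verify the composites by uniqueness of lifts, with Lemma \ref{same_coset} doing exactly the work you identify as the crux (target-preservation for $\Lambda(\psi)$, equivalently composability of the lifted images). One small slip: two lifts of the same $g$ satisfy $\tilde g' = d\tilde g$ for a \emph{single} $d \in D$ (not $d_1\tilde g d_2$), since lifts are determined by their source and $D$ is simply transitive on each fibre---but this does not affect your argument.
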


Before embarking on the proof, we must set some notation.  Let $x \in B$ and $g \in S(p(x))$.  Denote by $[g\tilde]_{(x)} \in \vK$ the unique element such that $p_*\left([g\tilde]_{(x)}\right) = g$ and $s\left([g\tilde]_{(x)}\right) = x$.  Such a lift exists by Lemma \ref{locally_bijective}.

\begin{proof}[Proof of Lemma \ref{groupoid_birman_hilden}]
Let $\phi \in \SAut(\vK)$ and $g \in \vG$.  To see $\Pi(\phi)$ is a well defined set map, note that if $\tilde g_1,\tilde g_2 \in p_*^{-1}(g)$, then $\tilde g_1 = d\tilde g_2$ for some $d \in D$.  Then $p_* \phi (\tilde g_1) = p_* \phi d(\tilde g_2) = p_* d \phi(\tilde g_2) = p_* \phi (\tilde g_2)$.

For $g,h \in \vG$ with $s(h) = t(g)$, choose lifts $\tilde g$ and $\tilde h$ such that $s(\tilde h) = t(\tilde g)$.  Then $\tilde g\tilde h \in p_*^{-1}(gh)$ and $p_*(\phi)(\tilde g \tilde h) = p_* \phi(\tilde g) p_* \phi (\tilde h)$.  Therefore $\Pi(\phi):\vG \to \vG$ is a well defined groupoid morphism.  It is easily checked that $\Pi(\phi^{-1})$ is a two-sided inverse for $\Pi(\phi)$ so $\Pi(\phi) \in \Aut(\vG)$.  Since $\phi \in \PAut(\vK)$, we have
\[
\Pi(\phi)(s(g)) = s(p_*\phi(\tilde g)) = p_*\phi (s(\tilde g)) = p_*(s(\tilde g)) = s(g),
\]
so $\Pi(\phi) \in \PAut(\vG)$.

To show $\Pi(\phi) \in \LAut_\vH(\vG)$ it suffices to show $g$ and $p_*\phi(\tilde g)$ are in the same coset of $\vH$ in $\vG$.  Set $\tilde g = [g\tilde]_{(x_i)}$ for some $i$. Then $t(\tilde g) = d(x_j)$ for some $j$ and some $d \in D$.  Since $\phi \in \PAut(\vK)$, $s\phi(\tilde g) = x_i$ and $t(\phi(\tilde g)) = d(x_j)$.  Therefore by Lemma \ref{same_coset}, $g$ and $p_*\phi(\tilde g)$ are in the same coset of $\vH$.  We may now conclude $\Pi$ is a well defined set map.  For $\phi,\varphi \in \SAut(\vK)$, $\Pi(\phi)\Pi(\varphi)(g) = p_*\phi(\wt{p_*\varphi(\tilde g)}) = p_*\phi\varphi(\tilde g) = \Pi(\phi\varphi)(g)$ so $\Pi:\SAut(\vK) \to \LAut_\vH(\vG)$ is a well defined group homomorphism.

Define a map which we optimistically label $\Pi^{-1}:\LAut_\vH(\vG) \to \SAut(\vK)$ by $\Pi^{-1}(\psi)(k) = [\psi p_*(k)\tilde]_{(s(k)
)}$ for all $k \in \vK$.

For $k,l \in \vK$ with $s(l) = t(k)$, the product $[\psi p_*(k)\tilde]_{(s(k))}[\psi p_*(l)\tilde]_{(s(l))}$ is, by Lemma \ref{same_coset}, defined in $\vK$ since $\psi$ preserves each coset of $\vH$ in $\vG$.  Furthermore, $p_*[\psi p_*(kl)\tilde]_{(s(k))} = p_*\left([\psi p_*(k)\tilde]_{(s(k))}[\psi p_*(l)\tilde]_{(s(l))}\right)$ so $\Pi^{-1}(\psi):\vK \to \vK$ is a groupoid homomorphism by Lemma \ref{locally_bijective}.

It is easily checked that $\Pi^{-1}(\psi^{-1})$ is a two-sided inverse for $\Pi^{-1}(\psi)$.  Furthermore, by the definition of $\Pi^{-1}(\psi)$, $s(\Pi^{-1}(\psi)(k)) = s(k)$ for all $k \in \vK$ so $\Pi^{-1}(\psi) \in \PAut(\vK)$.

We have $p_*[\psi p_* d(k)\tilde]_{(s(dk))} = p_*d\left([\psi p_*(k)\tilde]_{(s(k))}\right)$ for all $ d\in D$ and \linebreak $s\left([\psi p_* d(k)\tilde]_{(s(dk))}\right) = s\left(d\left([\psi p_*(k)\tilde]_{(s(k))}\right)\right) = ds(k)$.  So by Lemma \ref{locally_bijective}, $\Pi^{-1}(\psi)d(k) = d\Pi^{-1}(\psi)(k)$ implying $\Pi^{-1}(\psi) \in \SAut(\vK)$.

It remains to show, as the notation suggests, that $\Pi^{-1}$ is a two-sided inverse for $\Pi$.  We have $\Pi\Pi^{-1}(\psi)(g) = p_*[\psi p_*(\tilde g)\tilde]_{(s(\tilde g))} = \psi(g)$ for all $\psi \in \LAut_\vH(\vG)$ and $g \in \vG$.

On the other hand, first note that $p_*[p_*\phi(k)\tilde]_{(s(k))} = p_* \phi(k)$ and \linebreak $s\left([p_*\phi(k)\tilde]_{(s(k))}\right) = s(\phi(k)) = s(k)$.  Therefore by Lemma \ref{locally_bijective}, $[p_*\phi(k)\tilde]_{(s(k))} = \phi(k)$.  We now have $\Pi^{-1}\Pi(\phi)(k) = [p_*\phi(k)\tilde]_{(s(k))} = \phi(k)$.

Since $\Pi^{-1}$ is a two-sided inverse for $\Pi$, we may finally conclude that $\Pi:\SAut(\vK) \to \LAut_{\vH}(\vG)$ is an isomorphism.
\end{proof}

\section{Proof of Classification Results}\label{section_classification}
Given a finite-sheeted covering space $p:\wt\Sigma \to \Sigma$ of a compact surface branched at $\vB \subset \Sigma$, let $\wt\Sigma^\circ = \wt \Sigma \sm p^{-1}(\vB)$ and $\Sigma^\circ = \Sigma\sm \vB$.  Note that since the set of branch points $\vB$ is always finite, the resulting surfaces $\wt \Sigma^\circ$ and $\Sigma^\circ$ are always of finite type.  Abusing notation, denote the resulting unbranched cover $p:\wt\Sigma^\circ \to \Sigma^\circ$.  It is easy to see that restricting homeomorphisms gives an isomorphism $\LMod(\Sigma,\vB) \cong \LMod(\Sigma^\circ)$.  It follows from Theorem \ref{LModSMod} that if two fibre-preserving homeomorphisms are isotopic, then they are isotopic through fibre-preserving homeomorphisms.  In particular, between two isotopic fibre-preserving homeomorphisms, there is an isotopy that preserves $p^{-1}(\vB)$ pointwise.  Therefore, restricting homeomorphisms induces an isomorphism $\SMod(\wt\Sigma) \cong \SMod(\wt\Sigma^\circ)$.  With this in mind, we will move back and forth between the branched and unbranched covers without much indication.  

As hinted at above, to characterise when a homeomorphism lifts to a homeomorphism that fixes boundary components, we must look at the action of a homeomorphism on the fundamental groupoid $\pi_1(\Sigma^\circ,A)$ for a specific choice of basepoints $A \subset \Sigma^\circ$.

\subsection{Action on the fundamental groupoids}\label{action_groupoids}
Suppose $\partial \Sigma^\circ$ has $m$ components.  Let $A = \{x_0,x_1,\ldots,x_{m-1}\} \subset \partial \Sigma^\circ$ be such that each component contains exactly one of the $x_i$.  For each $x_i$, choose a point $\tilde x_i \in p^{-1}(x_i)$ and let $\wt A = \{\tilde x_0,\tilde x_1,\ldots,\tilde x_{m-1}\} \subset \partial \wt \Sigma^\circ$.  Let $B = p^{-1}(A)$ and denote the fundamental groupoids by $\vG = \pi_1(\Sigma^\circ,A)$, $\vH = p_*\pi_1(\wt\Sigma^\circ,\wt A)$, and $\vK = \pi_1(\wt\Sigma^\circ,B)$ as in Section \ref{fundamental_groupoid}.  Homeomorphisms that are isotopic relative to the basepoints of the fundamental groupoid induce equal automorphisms of the fundamental groupoid.  Therefore there are homomorphisms $\Mod(\Sigma^\circ) \to \Aut(\vG)$ and $\Mod(\wt\Sigma^\circ) \to \Aut(\vK)$ given by the action of representative homeomorphisms on the respective groupoids.

\begin{lem}\label{commutative_groupoid_diagram}
There is a commutative diagram
\begin{center}
\begin{tikzcd}
    \SMod(\wt\Sigma) \arrow[hookrightarrow]{r}{\wt \Psi} \arrow{d}{\Pi}[swap]{\cong} & \SAut(\vK) \arrow{d}{\Pi}[swap]{\cong} \\
    \LMod(\Sigma,\vB) \arrow[hookrightarrow]{r}{\Psi} & \LAut_{\vH}(\vG)
\end{tikzcd}
\end{center}
such that 
\begin{itemize}
\item the vertical maps are the Birman-Hilden isomorphisms from Theorem \ref{LModSMod} and Lemma \ref{groupoid_birman_hilden},
\item the map $\wt \Psi$ is an injection given by composing the action of $\SMod(\wt\Sigma^\circ)$ on $\vK$ with the isomorphism $\SMod(\wt\Sigma) \cong \SMod(\wt\Sigma^\circ)$, and
\item the map $\Psi$ is an injection given by composing the action of $\LMod(\Sigma^\circ)$ on $\vG$ with the isomorphism $\LMod(\Sigma,\vB) \cong \LMod(\Sigma^\circ)$.
\end{itemize}
\end{lem}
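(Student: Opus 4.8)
The plan is to exhibit the two injections $\wt\Psi$ and $\Psi$ explicitly as the groupoid actions restricted to the relevant subgroups, and then check commutativity by unwinding the definitions of all four maps on a representative homeomorphism. First I would define $\wt\Psi$: given $[\tilde f]\in\SMod(\wt\Sigma)$, pass to the associated class in $\SMod(\wt\Sigma^\circ)$ via the isomorphism recalled at the start of Section \ref{section_classification}, pick a fibre-preserving representative $\tilde f$ fixing $\partial\wt\Sigma^\circ$ pointwise, and set $\wt\Psi([\tilde f])=\tilde f_*\in\Aut(\vK)$. I must verify this lands in $\SAut(\vK)$: since $\tilde f$ fixes each point of $B\subset\partial\wt\Sigma^\circ$, the induced automorphism fixes every object, so $\tilde f_*\in\PAut(\vK)$; and since $\tilde f$ is fibre-preserving it normalises $D$ in $\Homeo^+(\wt\Sigma^\circ)$, while Proposition \ref{SCentral} (applied to the unbranched restriction) upgrades this to centralising $D$, hence $\tilde f_*$ commutes with the image of $D$ in $\Aut(\vK)$. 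Injectivity of $\wt\Psi$ is the substantive point and is where I expect the main work: I would argue that a fibre-preserving homeomorphism of $\wt\Sigma^\circ$ fixing $B$ pointwise and acting trivially on $\pi_1(\wt\Sigma^\circ,B)$ is isotopic to the identity rel $B$. This follows from the Alexander method / the change-of-coordinates principle: for a surface with boundary, an orientation-preserving homeomorphism fixing the boundary pointwise and inducing the identity on the fundamental groupoid based at one point per boundary component is isotopic to the identity rel those points; any isotopy can then be taken fibre-preserving by Theorem \ref{LModSMod}. The map $\Psi$ is defined and shown injective by the identical argument on the base, using $\LMod(\Sigma,\vB)\cong\LMod(\Sigma^\circ)$ and the action on $\vG=\pi_1(\Sigma^\circ,A)$; that $\Psi$ lands in $\LAut_\vH(\vG)$ is exactly the content already extracted in Lemma \ref{groupoid_birman_hilden}, since a liftable boundary-fixing homeomorphism has a fibre-preserving lift, so its action is $\Pi(\wt\Psi(\text{lift}))$, which lies in $\LAut_\vH(\vG)$.

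With the four maps in hand, commutativity is a diagram chase on a single element. Take $[\tilde f]\in\SMod(\wt\Sigma)$ with fibre-preserving representative $\tilde f$ (rel $\partial\wt\Sigma^\circ$) and let $f=\Pi([\tilde f])$, so $p\tilde f=fp$ and $f$ fixes $\partial\Sigma^\circ$ pointwise. Going right then down: $\wt\Psi([\tilde f])=\tilde f_*\in\SAut(\vK)$, and applying the groupoid Birman–Hilden map from Lemma \ref{groupoid_birman_hilden} gives the automorphism $g\mapsto p_*\tilde f_*(\tilde g)$ of $\vG$, where $\tilde g$ is any $p_*$-lift of $g$. Going down then right: $\Pi([\tilde f])=[f]$, and $\Psi([f])=f_*\in\LAut_\vH(\vG)$, i.e. $g\mapsto f_*(g)$. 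These agree because for a path $\delta$ representing $g$ and a lift $\tilde\delta$ representing $\tilde g$, the relation $p\tilde f=fp$ gives $p\circ(\tilde f\circ\tilde\delta)=f\circ(p\circ\tilde\delta)=f\circ\delta$, so $p_*\tilde f_*[\tilde\delta]=f_*[\delta]$; this is exactly the identity $\Pi\circ\wt\Psi=\Psi\circ\Pi$ at the level of homomorphisms. The vertical maps being isomorphisms is immediate from Theorem \ref{LModSMod} and Lemma \ref{groupoid_birman_hilden}, which is all the lemma asserts.

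The main obstacle is establishing injectivity of $\Psi$ (equivalently $\wt\Psi$): one needs a clean statement that a homeomorphism of a finite-type surface with nonempty boundary that fixes the boundary pointwise and acts trivially on the fundamental groupoid based at one point per boundary component is isotopic, rel those basepoints, to the identity. I would reduce this to the standard fact that such a surface is a $K(\pi,1)$ with free fundamental group together with the Dehn–Nielsen–Baer-type rigidity for surfaces with boundary; alternatively, capping boundary components and tracking basepoints reduces it to the well-known statement that $\Mod(\Sigma^\circ)\hookrightarrow\Aut(\pi_1(\Sigma^\circ,A))$. Everything else — well-definedness of $\wt\Psi$ and $\Psi$, membership in $\SAut(\vK)$ and $\LAut_\vH(\vG)$, and commutativity — is a routine unwinding of definitions already set up in Sections \ref{section_groupoids} and \ref{section_classification}.
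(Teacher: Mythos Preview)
Your proposal is correct and follows essentially the same route as the paper: verify $\wt\Psi$ lands in $\SAut(\vK)$ via Proposition~\ref{SCentral}, establish injectivity of the groupoid action, and check commutativity by the one-line computation $p_*\tilde f_*(\tilde g)=f_*p_*(\tilde g)$ coming from $p\tilde f=fp$. Two small remarks: the paper dispatches injectivity by citing a known result (that the action on the fundamental groupoid with one basepoint per boundary component is faithful) rather than sketching an Alexander-method argument; and the paper is slightly more economical in that it does not separately verify that $\Psi$ lands in $\LAut_\vH(\vG)$ or is injective---once $\wt\Psi$ is shown to have these properties and the square commutes, the corresponding properties of $\Psi$ follow immediately from the vertical isomorphisms, so your direct checks on $\Psi$ are redundant (though not wrong).
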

\begin{proof}
Since representative homeomorphisms of $\SMod(\wt\Sigma^\circ)$ fix the basepoints $B\subset \partial\wt\Sigma$ pointwise, the image of $\wt\Psi$ is contained in $\PAut(\vK)$.  It follows from Proposition \ref{SCentral} and the fact that isotopic homeomorphisms relative to the basepoints induce the same groupoid automorphism, that the image of $\wt\Psi$ is contained in $\SAut(\vK)$.  Since $\vK$ has at least one basepoint on each boundary component of $\wt \Sigma$, $\wt\Psi$ is injective \cite[Theorem~3.1.1]{KK}.  It now suffices to show $\Pi\wt\Psi\Pi^{-1} = \Psi:\LMod(\Sigma,\vB) \to \LAut_\vH(\vG)$.  Let $[f] \in \LMod(\Sigma,\vB)$, $g \in \vG$, and $\tilde g \in p_*^{-1}(g) \subset \vK$.  After identifying homeomorphisms of $\Sigma$ and $\wt\Sigma$ with their restrictions to $\Sigma^\circ$ and $\wt\Sigma^\circ$ respectively we have
\[
\Pi\wt\Psi\Pi^{-1}([f])(g) = \Pi\wt\Psi([\tilde f])(g) = p_*\tilde f_*(\tilde g) = f_*p_*(\tilde g) = \Psi([f])(g).
\]
Therefore $\Pi\wt\Psi = \Psi \Pi$, completing the proof.
\end{proof}

\subsection{The case where everything lifts}

We now move on to proving Theorem \ref{ClassyLMod}. As such, we return to the setting of the original branched cover $p:\wt\Sigma \to \Sigma$ branched at $\vB \subset \Sigma\sm \partial \Sigma$.  If $[f] \in \Mod(\Sigma,\vB)$ we will abuse notation and denote by $f_* \in \Aut(\pi_1(\Sigma^\circ,A))$ the automorphism induced by any representative homeomorphism for $[f]$.  The abuse of notation is legal since any representative homeomorphism for $[f]$ fixes $A$ pointwise, and isotopic homeomorphisms induce the same groupoid automorphism.

\begin{thm}\label{lmodmain}
$\LMod(\Sigma,\vB) = \{[f] \in \Mod(\Sigma,\vB) :f_* \in \LAut_\vH(\vG)\}$.
\end{thm}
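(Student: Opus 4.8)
The plan is to prove the two inclusions separately, leveraging the commutative diagram of Lemma~\ref{commutative_groupoid_diagram} and the Birman--Hilden isomorphisms of Theorem~\ref{LModSMod} and Lemma~\ref{groupoid_birman_hilden}. First I would establish the inclusion $\LMod(\Sigma,\vB) \subseteq \{[f] : f_* \in \LAut_\vH(\vG)\}$. If $[f] \in \LMod(\Sigma,\vB)$, then $[f] = \Psi^{-1}$ is in the domain of the bottom horizontal map $\Psi$ of the diagram, and by construction $\Psi([f]) = f_*$ (after identifying $\Sigma$ with $\Sigma^\circ$), so $f_* = \Psi([f]) \in \LAut_\vH(\vG)$ directly. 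This direction is essentially immediate from the setup.

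The substantive direction is the reverse inclusion: suppose $[f] \in \Mod(\Sigma,\vB)$ satisfies $f_* \in \LAut_\vH(\vG)$; I must produce a lift of a representative homeomorphism that fixes $\partial\wt\Sigma$ pointwise, i.e.\ show $[f] \in \LMod(\Sigma,\vB)$. The condition $f_* \in \LAut_\vH(\vG)$ says in particular that $f_*(\vH) = \vH$, where $\vH = p_*\pi_1(\wt\Sigma^\circ,\wt A)$; restricting attention to a single basepoint $x_0$ recovers the classical criterion $f_* p_* \pi_1(\wt\Sigma^\circ, \tilde x_0) = p_* \pi_1(\wt\Sigma^\circ,\tilde x_0)$, so any representative of $[f]$ lifts to a homeomorphism $\tilde f$ of $\wt\Sigma^\circ$ (equivalently $\wt\Sigma$). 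The point is to arrange that some lift fixes the boundary pointwise. I would argue this using the groupoid Birman--Hilden isomorphism $\Pi : \SAut(\vK) \to \LAut_\vH(\vG)$ of Lemma~\ref{groupoid_birman_hilden}: since $f_* \in \LAut_\vH(\vG)$, there is a unique $\phi = \Pi^{-1}(f_*) \in \SAut(\vK)$, i.e.\ a pure, $D$-equivariant automorphism of $\vK = \pi_1(\wt\Sigma^\circ, B)$ lying over $f_*$. Now among the lifts $\tilde f$ of $f$, the induced automorphisms of $\vK$ differ by composition with deck transformations; since $\phi$ is pure (fixes every object of $\vK$, i.e.\ every point of $B$), and each lift permutes $B$ compatibly with how its corresponding deck transformation does, there is a choice of lift whose action on $\vK$ equals $\phi$ up to an automorphism fixing each vertex group, and in particular fixes every boundary basepoint in $B$. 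The key input is then that a homeomorphism of $\wt\Sigma^\circ$ that fixes one basepoint on each boundary component and acts on the fundamental groupoid $\vK$ in a way realizable by an element of $\SMod(\wt\Sigma)$ can be isotoped (rel basepoints) to fix $\partial\wt\Sigma$ pointwise; this is where the hypothesis that $\vK$ has a basepoint on every boundary component, together with the injectivity statement cited from \cite[Theorem~3.1.1]{KK} in the proof of Lemma~\ref{commutative_groupoid_diagram}, does the work.

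Concretely, I would organize the reverse inclusion as follows. Given $[f]$ with $f_* \in \LAut_\vH(\vG)$, set $\phi := \Pi^{-1}(f_*) \in \SAut(\vK)$. Pick any lift $\tilde f_0 \in \Homeo^+(\wt\Sigma^\circ)$ of a representative of $f$; its induced automorphism $(\tilde f_0)_* \in \Aut(\vK)$ satisfies $\Pi((\tilde f_0)_*\text{-normalized}) = f_*$ after composing with a suitable deck transformation $d \in D$, because $\Pi$ sends the class of any lift to $f_*$ and $\Pi$ is injective modulo the $D$-action already quotiented out. Replacing $\tilde f_0$ by $d\tilde f_0$ for the appropriate $d$, we get a lift $\tilde f$ with $\tilde f_*= \phi \in \SAut(\vK) \subseteq \PAut(\vK)$; in particular $\tilde f$ fixes each point of $B = p^{-1}(A)$ setwise at the groupoid level, hence fixes the object set pointwise. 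Since each component of $\partial\wt\Sigma^\circ$ carries a point of $B$, and $\tilde f$ fixes the homotopy class of each boundary loop (as $\phi$ is pure and $f$ fixes $\partial\Sigma$ pointwise, so $f_*$ fixes each peripheral element of $\vG$, hence $\phi$ fixes each peripheral element of $\vK$), a standard Alexander-type isotopy supported near $\partial\wt\Sigma^\circ$ pushes $\tilde f$ to a homeomorphism fixing $\partial\wt\Sigma^\circ$ pointwise; this isotopy can be taken rel $B$. Then $\tilde f \in \SHomeo(\wt\Sigma,\partial\wt\Sigma)$, so $f = \Pi(\tilde f) \in \LHomeo(\Sigma,\partial\Sigma,\vB)$ and $[f] \in \LMod(\Sigma,\vB)$, completing the proof.

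The main obstacle I anticipate is the last step: passing from ``$\tilde f$ acts correctly on the fundamental groupoid and fixes a basepoint on each boundary component'' to ``$\tilde f$ is isotopic rel basepoints to a homeomorphism fixing $\partial\wt\Sigma$ pointwise.'' One must be careful that fixing the homotopy classes of the peripheral loops based at the $\tilde x_i$ genuinely suffices to straighten $\tilde f$ on the boundary — this uses that a surface with boundary is a $K(\pi,1)$ and that its homeomorphisms are determined up to isotopy by their action on $\pi_1$ with enough basepoints, i.e.\ exactly the injectivity of $\Mod(\wt\Sigma^\circ) \to \Aut(\vK)$ invoked from \cite{KK}. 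I would make sure the bookkeeping of which deck transformation to compose with is consistent with Lemma~\ref{same_coset}, so that the chosen lift really does land in $\PAut(\vK)$ rather than merely $\Aut(\vK)$.
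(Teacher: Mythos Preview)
Your easy inclusion is fine and matches the paper. The substantive direction has a real gap, and it is precisely at the point you flag as ``bookkeeping''.

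You want to produce a lift $\tilde f$ whose induced automorphism of $\vK$ equals $\phi = \Pi^{-1}(f_*) \in \PAut(\vK)$; equivalently, a lift that fixes every point of $B = p^{-1}(A)$. You argue that the lifts differ by deck transformations and that ``there is a choice of lift'' realizing $\phi$. But there are only $|D|$ lifts, while there are $|D|\cdot|A|$ points in $B$: for each single point $b \in B$ there is a unique $d$ with $d\tilde f_0(b) = b$, and the whole content of the theorem is that \emph{the same} $d$ works for all $b$ simultaneously. Your proposal asserts this without proof. (The sentence about $\Pi$ being ``injective modulo the $D$-action already quotiented out'' is also incorrect: $\Pi$ in Lemma~\ref{groupoid_birman_hilden} is a genuine isomorphism $\SAut(\vK)\to\LAut_\vH(\vG)$, with no $D$-quotient, and it is only defined on $\SAut(\vK)$, so you cannot apply it to an arbitrary $(\tilde f_0)_*$.)

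The paper supplies exactly this missing step, and it does so directly rather than through $\Pi^{-1}$. One takes the lift $\tilde f$ with $\tilde f(\tilde x_0)=\tilde x_0$ and, for an arbitrary $\tilde y \in \partial\wt\Sigma$, chooses a path $\tilde\delta$ from $\tilde x_0$ to $\tilde y$ and an arc $\sigma$ inside $\partial\Sigma$ from $y=p(\tilde y)$ to the nearby basepoint $x_i$, so that $\delta\sigma \in \vG$ and $f_*(\sigma)=\sigma$. Since $f_* \in \LAut_\vH(\vG)$, the elements $\delta\sigma$ and $f_*(\delta\sigma)$ lie in the same coset of $\vH$; Lemma~\ref{same_coset} then forces their lifts from $\tilde x_0$ to have the same target, which yields $\tilde f(\tilde y)=\tilde y$. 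This is exactly the argument that a single choice of lift fixes \emph{all} boundary points, and it is the heart of the proof, not bookkeeping.

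Finally, your last step is both unnecessary and misdirected. Once a lift $\tilde f$ fixes one point on each boundary component of $\wt\Sigma$, no Alexander isotopy or appeal to \cite{KK} is needed: since $f$ fixes $\partial\Sigma$ pointwise, the restriction of $\tilde f$ to each component of $\partial\wt\Sigma$ is a deck transformation of a covering of circles; a deck transformation with a fixed point is the identity. So $\tilde f$ already fixes $\partial\wt\Sigma$ pointwise on the nose. The obstacle you anticipated is not where the difficulty lies.
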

\begin{proof}
By Lemma \ref{commutative_groupoid_diagram}, it suffices to show that if $f_* \in \LAut_\vH(\vG)$, then $[f] \in \LMod(\Sigma,\vB)$.  The homeomorphism $f$ lifts since $f_*(p_*\pi_1(\wt\Sigma,\tilde x_0)) = p_*\pi_1(\wt\Sigma,\tilde x_0)$.  Let $\tilde f$ be the lift of $f$ such that $\tilde f(\tilde x_0) = \tilde x_0$.  It remains to show $\tilde f$ fixes $\partial \wt \Sigma$ pointwise.

Let $\tilde y \in \partial \wt \Sigma$.  Note that $y = p(\tilde y)$ is in the same component of $\partial \Sigma$ as $x_i$ for some $i$.  Choose $\sigma \in \pi_1(\Sigma,\partial\Sigma)$ such that $s(\sigma) =y$, $t(\sigma) = x_i$ and $\sigma$ is represented by an arc completely contained in $\partial \Sigma$ so $f_*(\sigma) = \sigma$.  Choose $\tilde \delta \in \pi_1(\wt\Sigma,\partial\wt\Sigma)$ such that $s(\tilde \delta) = \tilde x_0$ and $t(\tilde \delta) = \tilde y$.  Let $\delta = p_*(\tilde \delta)$ and note that $\delta\sigma \in \vG$.

Let $\tilde \sigma$ be the unique lift of $\sigma$ such that $s(\tilde \sigma) = \tilde y$.  We have $p_*\tilde f_*(\tilde \delta\tilde\sigma) = f_*(\delta\sigma)$, $p_*(\tilde\delta\tilde\sigma)= \delta\sigma$, $s(\tilde f_*(\tilde \delta\tilde \sigma)) = s(\tilde\delta\tilde\sigma) = \tilde x_0$, and $t(f_*(\delta\sigma)) = t(\delta\sigma)$.  Since $f_*(\delta\sigma)$ and $\delta\sigma$ are in the same coset of $\vH$, Lemma \ref{same_coset} implies $t(\tilde f_*(\tilde\delta\tilde\sigma)) = t(\tilde\delta\tilde\sigma)$, implying $t(\tilde f_*(\tilde \sigma)) = t(\tilde \sigma)$.  Since $p_*\tilde f_*(\tilde\sigma) = p_*(\tilde\sigma)$, we have $\tilde f_*(\tilde \sigma) = \tilde \sigma$ by Lemma \ref{locally_bijective}.  Therefore $\tilde f(\tilde y) = s(\tilde f_*(\tilde\sigma)) = s(\tilde \sigma) = \tilde y$, completing the proof.
\end{proof}

If $\Sigma$ has one boundary component we get the following well-known corollary.

\begin{cor} \label{one_boundary}
Suppose $\Sigma$ has one boundary component.  Choose a basepoint $x \in \partial\Sigma^\circ$ and $\tilde x \in p^{-1}(x)$.  Then
\[
\LMod(\Sigma,\vB) = \{[f] \in \Mod(\Sigma,\vB) : qf_* = q\}
\]
where $q:\pi_1(\Sigma^\circ,x) \to \pi_1(\Sigma^\circ,x)/p_*\pi_1(\wt\Sigma^\circ,\tilde x)$ is the quotient map and $f_*$ is the induced map on $\pi_1(\Sigma^\circ,x)$.
\end{cor}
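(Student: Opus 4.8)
The plan is to deduce this immediately from Theorem \ref{lmodmain} by specialising the groupoid setup of Section \ref{action_groupoids} to a single basepoint. When $\Sigma$ has one boundary component the set $A$ is a single point $\{x\} \subset \partial\Sigma^\circ$, so the connected groupoid $\vG = \pi_1(\Sigma^\circ,A)$ has one object and is literally the group $\pi_1(\Sigma^\circ,x)$; similarly $\vH = p_*\pi_1(\wt\Sigma^\circ,\wt A)$ is the normal subgroup $p_*\pi_1(\wt\Sigma^\circ,\tilde x)$ of $\pi_1(\Sigma^\circ,x)$, and $\vG/\vH$ is the target of the quotient map $q$. Because there is only one object, $\PAut(\vG) = \Aut(\vG)$ and the definition of $\LAut_\vH(\vG)$ from Section \ref{automorphisms_of_groupoids} reads
\[
\LAut_\vH(\vG) = \{\phi \in \Aut(\pi_1(\Sigma^\circ,x)) : \phi(\vH) = \vH \text{ and } \ol\phi = \id \in \Aut(\vG/\vH)\}.
\]

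Next I would check that, for an automorphism $\phi = f_*$ coming from a homeomorphism, membership in $\LAut_\vH(\vG)$ is equivalent to the single equation $q\phi = q$. The condition $\ol\phi = \id$ says exactly that $q(\phi(g)) = q(g)$ for all $g$, i.e. $q\phi = q$. Conversely, if $q\phi = q$, then for $g \in \vH$ we get $q(\phi(g)) = q(g) = 1$, so $\phi(\vH) \subseteq \vH$; post-composing $q\phi = q$ with $\phi^{-1} = (f^{-1})_*$ gives $q\phi^{-1} = q$, whence also $\phi^{-1}(\vH) \subseteq \vH$ and therefore $\phi(\vH) = \vH$. With $\phi(\vH) = \vH$ the induced map $\ol\phi$ is defined, and $q\phi = q$ says $\ol\phi = \id$. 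Hence $f_* \in \LAut_\vH(\vG)$ if and only if $qf_* = q$.

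Plugging this equivalence into Theorem \ref{lmodmain}, namely $\LMod(\Sigma,\vB) = \{[f] \in \Mod(\Sigma,\vB) : f_* \in \LAut_\vH(\vG)\}$, then gives the stated formula. The only preliminary point to note is that $f_*$ really is a well-defined automorphism of $\pi_1(\Sigma^\circ,x)$ associated to the class $[f]$: any representative homeomorphism fixes $\partial\Sigma$ pointwise, hence fixes $x$, and isotopic homeomorphisms induce the same automorphism of $\pi_1$, just as in the remarks preceding Theorem \ref{lmodmain}. There is no substantial obstacle, since the content is already contained in Theorem \ref{lmodmain}; the one mildly delicate step is upgrading the one-sided identity $qf_* = q$ to the two-sided condition $f_*(\vH) = \vH$, where invertibility of $f_*$ is used.
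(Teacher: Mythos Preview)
Your proof is correct and follows the same approach as the paper: specialise Theorem \ref{lmodmain} to a single basepoint and identify the condition $f_* \in \LAut_\vH(\vG)$ with $qf_* = q$. The paper's proof is terser (it simply asserts that $qf_* = q$ is equivalent to $f_*$ acting trivially on the cosets of $p_*\pi_1(\wt\Sigma^\circ,\tilde x)$ and then invokes Theorem \ref{lmodmain}), whereas you additionally make explicit why $qf_* = q$ forces the two-sided equality $f_*(\vH) = \vH$ via invertibility of $f_*$; this is a point the paper leaves implicit.
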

\begin{proof}
The condition $qf_* = q$ is equivalent to $f_*$ acting trivially on the cosets of $p_*\pi_1(\wt\Sigma^\circ,\tilde x)$ in $\pi_1(\Sigma^\circ,x)$.  The result then follows from Theorem \ref{lmodmain}.
\end{proof}

The next proposition gives a direct way to check whether or not an element of $\Mod(\Sigma,\vB)$ is in $\LMod(\Sigma,\vB)$.  To that end, choose a point $x_0 \in \partial\Sigma^\circ$ and a lift $\tilde x_0 \in p^{-1}(x_0)$.  Choose a generating set $\{\gamma_1,\ldots,\gamma_n\}$ of $\pi_1(\Sigma^\circ,x_0)$.  Since the cover is regular, $\pi_1(\Sigma^\circ,x_0)/p_*\pi_1(\wt\Sigma^\circ,\tilde x_0)\cong D$.  Choose an isomorphism and let $q:\pi_1(\Sigma^\circ,x_0) \to D$ be the quotient map.

Suppose there are $m$ components of $\partial\Sigma^\circ$.  Enumerate the components not containing $x_0$ from 1 to $m-1$.  For each $i \in \{1,\ldots,m-1\}$, choose an arc $\sigma_i:[0,1] \to \Sigma^\circ$ such that $\sigma_i(0) = x_0$ and $\sigma_i(1)$ is in the $i$th boundary component.  Let $x_i = \sigma_i(1) \in \partial\Sigma^\circ$.

Let $A = \{x_0,x_1,\ldots,x_{m-1}\} \subset \partial\Sigma^\circ$.  Then the $\gamma_i$ and $[\sigma_j]$ are all elements of $\pi_1(\Sigma^\circ,A)$.  Given an element $[f] \in \Mod(\Sigma^\circ)$, $f_*[\sigma_j] = a_j[\sigma_j]$ for some $a_j \in \pi_1(\Sigma^\circ,x_0)$.

\begin{prop}\label{identifying_liftable_classes}
A mapping class $[f]$ is in $\LMod(\Sigma,\vB)$ if and only if $qf_*(\gamma_i) = q(\gamma_i)$ for all $i$ and $a_j \in \ker q$ for all $j$.
\end{prop}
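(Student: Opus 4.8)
The plan is to derive the statement from Theorem~\ref{lmodmain}, which says that $[f]\in\LMod(\Sigma,\vB)$ if and only if $f_*\in\LAut_\vH(\vG)$, and then to unwind the condition $f_*\in\LAut_\vH(\vG)$ in terms of the chosen generators $\gamma_i$ and arcs $\sigma_j$.

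First I would record the relevant reformulation of membership in $\LAut_\vH(\vG)$. Since every representative homeomorphism of $[f]$ fixes $\partial\Sigma^\circ\supset A$ pointwise, $f_*$ fixes every object of $\vG$, so $f_*\in\PAut(\vG)$. Next I would check that the coset of $e_0$ in the quotient groupoid $\vG/\vH$ is exactly the subgroupoid $\vH$: this uses only that $\vH$ is connected and wide, since then each $a\in\vH$ can be written as $a=x\cdot(x^{-1}a)$ with $x\in\vH$ chosen so that $t(x)=0$ and $s(x)=s(a)$, which is the factored form demanded by the coset relation. Writing $c\colon\vG\to\vG/\vH$ for the quotient morphism, it follows that a pure automorphism $\phi$ lies in $\LAut_\vH(\vG)$ precisely when $c\circ\phi=c$: if $\ol\phi=\id$ this is clear, and conversely $c\circ\phi=c$ forces $\phi^{-1}(\vH)=\vH$ (the preimage of the identity), so $\ol\phi$ is defined and, since $c$ is surjective, must equal $\id$.

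Then I would use that $\{\gamma_1,\dots,\gamma_n\}$ generates the vertex group $G_{00}=\pi_1(\Sigma^\circ,x_0)$ while $\{e_0,[\sigma_1],\dots,[\sigma_{m-1}]\}$ is a star based at $x_0$, so $\{\gamma_i\}\cup\{[\sigma_j]\}$ generates $\vG$ as a groupoid. Since a groupoid morphism is determined by its values on a generating set, $c\circ f_*=c$ holds if and only if $f_*(\gamma_i)$ lies in the $\vH$-coset of $\gamma_i$ for every $i$ and $f_*([\sigma_j])$ lies in the $\vH$-coset of $[\sigma_j]$ for every $j$. It remains to translate these conditions. For $a,b\in G_{00}$, an equation $a=xby$ with $x,y\in\vH$ forces $x,y\in G_{00}$ by matching sources and targets, so $a$ and $b$ share an $\vH$-coset if and only if $b^{-1}a\in H_{00}=p_*\pi_1(\wt\Sigma^\circ,\tilde x_0)=\ker q$; hence the first family of conditions is exactly $qf_*(\gamma_i)=q(\gamma_i)$ for all $i$. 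For the star elements, $f_*[\sigma_j]=a_j[\sigma_j]$ with $a_j\in G_{00}$ gives $c(f_*[\sigma_j])=c(a_j)\,c([\sigma_j])$ in the group $\vG/\vH$, so the $j$-th condition holds if and only if $c(a_j)$ is trivial, i.e.\ $a_j\in\ker q$. Assembling these equivalences yields the proposition.

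I expect the only real friction to be the groupoid bookkeeping in the middle steps — chiefly the identification of the $e_0$-coset with $\vH$ and the observation that the $\vH$-coset relation, restricted to the vertex group $G_{00}$, is just the partition into cosets of $\ker q$ — rather than anything conceptually deep; once those identifications are in place, the result is a formal consequence of Theorem~\ref{lmodmain}.
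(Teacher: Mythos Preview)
Your proof is correct and follows the same overall route as the paper: invoke Theorem~\ref{lmodmain} to reduce to $f_*\in\LAut_\vH(\vG)$, then unwind that condition on the generators $\gamma_i$ and the star $\{[\sigma_j]\}$. The only difference is packaging: the paper appeals to Lemma~\ref{Laut} (after choosing lifts $\tilde\sigma_j$ so that $\{[\sigma_j]\}$ is a star in $\vH$), whereas you reprove the relevant special case directly via the quotient map $c\colon\vG\to\vG/\vH$, which has the mild advantage of not requiring any particular choice of $\tilde x_1,\ldots,\tilde x_{m-1}$.
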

\begin{proof}
Choose a lift $\tilde x_0 \in p^{-1}(x_0)$. For all $i$ choose lifts $\tilde \sigma_i$ of $\sigma_i$ such that $\tilde \sigma_i(0) = \tilde x_0$.  Let $\tilde x_i = \tilde \sigma_i(1)$ and let $\wt A = \{\tilde x_0,\tilde x_1,\ldots,\tilde x_{m-1}\}$.  Let $\vG = \pi_1(\Sigma^\circ,A)$ and $\vH = p_*\pi_1(\wt\Sigma^\circ,\wt A)$.  Then by Theorem \ref{lmodmain} $[f] \in \LMod(\Sigma,\vB)$ if and only if $f_* \in \LAut_\vH(\vG)$ where $\LAut_\vH(\vG)$ is as defined in Section \ref{automorphisms_of_groupoids}.

The condition $qf_*(\gamma_i) = q(\gamma_i)$ for all $i$ is equivalent to $f_*$ acting trivially on the cosets of $p_*\pi_1(\wt \Sigma^\circ,\tilde x_0)$ in $\pi_1(\Sigma^\circ,x_0)$.  The condition $a_j \in \ker q$ implies $a_j \in p_*\pi_1(\wt \Sigma^\circ,\tilde x_0)$ for all $i$.  The result follows from observing that $\{[\sigma_1],\ldots,[\sigma_{m-1}]\}$ is a star in $\vH$ and applying Lemma \ref{Laut}.
\end{proof}

We are now ready to prove the first of two classification results.

\begin{proof}[Proof of Theorem \ref{ClassyLMod}]
We first prove that $\LMod(\Sigma, \vB) = \Mod(\Sigma, \vB)$ if and only if $p : \wt \Sigma \to \Sigma$ is a Burau cover. Let $p:\wt \Sigma^\circ \to {\bf D}_n$ be the associated unbranched cover of a Burau cover. Let $x_0 \in \partial {\bf D}_n$ and let $\{c_1,\dots,c_n\}$ be a generating set for $\pi_1({\bf D}_n, x_0)$ such that $c_i$ is represented by a loop which surrounds only the $i$th puncture anti-clockwise. Let $H_i \in \Mod({\bf D}_n)$  be a half twist whose support (a disk with two punctures) intersects all representative loops of $c_i$ and $c_{i+1}$ and is disjoint from representatives of $c_j$ for all $j \ne i,i+1$. If we consider each $H_i$ as an automorphism of $\pi_1({\bf D}_n, x_0)$ we can assume that $H_i(c_i) = c_i c_{i+1} c_i^{-1}$ and $H_i(c_{i+1})=c_i$. From the definition of Burau covers we have that
\[
q H_i(c_i)=q(c_i c_{i+1} c_i^{-1}) = q(c_i) q( c_{i+1}) q( c_i^{-1}) = 1,
\]\[
q H_i(c_{i+1})=q(c_i) = 1, \mbox{ and}
\]\[
q H_i(c_j)=q(c_j)=1 \mbox{ for all } j \ne i,i+1.
\]
By Corollary \ref{one_boundary} we have that $H_i \in \LMod({\bf D}_n)$. It follows from the fact that the set $\{ H_1,\dots,H_{n-1} \}$ generates $\Mod({\bf D}_n)$ that every mapping class lifts, that is, $\LMod ({\bf D}_n) = \Mod ({\bf D}_n)$. As discussed at the beginning of Section \ref{section_classification} this is equivalent to showing that $\LMod(\Sigma_0^1, \vB) = \Mod(\Sigma^1_0, \vB)$.

\begin{figure}[t]
\begin{center}
\labellist\hair 1pt
	\pinlabel {$x_0$} at 158 15 
	\pinlabel {$x_1$} at 260 127 
	\pinlabel {$x_2$} at 302 99 
	\pinlabel {$x_{m-1}$} at 266 38 
	\pinlabel {\red $a_1$} at 93 16 
	\pinlabel {\color{Blue} $b_1$} at 46 26 
	\pinlabel {\red $a_2$} at 21 33 
	\pinlabel {\color{Blue} $b_2$} at 25 55 
	\pinlabel {\red $a_g$} at 28 99 
	\pinlabel {\color{Blue} $b_g$} at 77 123 
	\pinlabel {\green $c_1$} at 118 116 
	\pinlabel {\green $c_2$} at 143 116 
	\pinlabel {\green $c_n$} at 185 116 
	\pinlabel {\color{teal} $\iota_1$} at 237 115 
	\pinlabel {\color{teal} $\iota_2$} at 268 80 
	\pinlabel {\color{teal} $\iota_{m-1}$} at 228 36 
	\pinlabel {\fuchsia $d_1$} at 263 117
	\pinlabel {\fuchsia $d_2$} at 293 71
	\pinlabel {\fuchsia $d_{m-1}$} at 276 58
    \endlabellist
\includegraphics[scale=1.15]{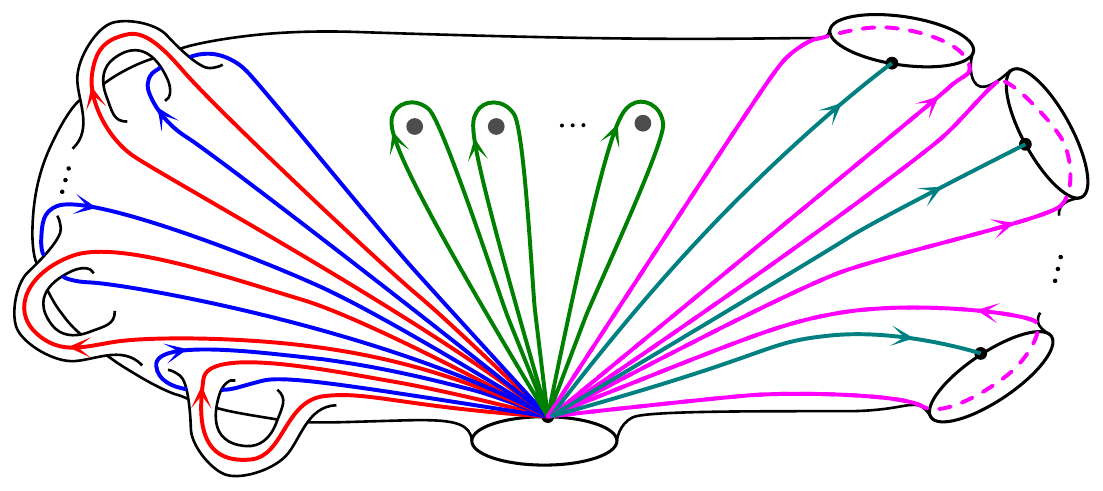}
\end{center}
\caption{A generating set for $\pi_1(\Sigma^\circ,A)$.}
\label{hindenburg}
\end{figure}

Conversely, assume $\LMod(\Sigma,\vB) = \Mod(\Sigma,\vB)$.  Suppose $\abs{\vB} = n \geq 0$ and $\Sigma$ has genus $g\geq 0$ and $m \geq 1$ boundary components. Consider the generating set 
\[
\{a_1,b_1,\ldots,a_g,b_g,c_1,\ldots,c_n,d_1,\ldots,d_{m-1},\iota_1,\ldots,\iota_{m-1}\}
\]
of the fundamental groupoid $\vG = \pi_1(\Sigma^\circ,A)$ defined in Figure \ref{hindenburg}, where $\vG$ is as in the beginning of Section \ref{action_groupoids}.  Note that $\{\iota_1,\ldots,\iota_{m-1}\}$ forms a star in $\vG$, and each $\iota_i$ is represented by a non-separating simple arc.  Denote by $\Upsilon$ the generating set $\{a_1,b_1,\ldots,a_g,b_g,c_1,\ldots,c_n,d_1,\ldots,d_{m-1}\}$ of $\pi_1(\Sigma^\circ,x_0)$.

We aim to show $\Sigma$ is a disk, and we begin by showing $m = 1$.  Suppose not, and note that every element in $\vG$ of the form $c\iota_1$ where $c \in \Upsilon$ can be represented by a simple arc with initial and terminal endpoint agreeing with $\iota_1$.  Therefore there is an element $[f] \in \Mod(\Sigma^\circ)$ such that $f_*(\iota_1) = c\iota_1$.  By Proposition \ref{identifying_liftable_classes} we have $c \in p_*\pi_1(\wt\Sigma^\circ,\tilde x_0)$ for all $c \in \Upsilon$, contradicting the assumption that $p$ is non-trivial.  Therefore $\Sigma$ must have a single boundary component.

Assume now that $m = 1$ and $\Sigma$ has positive genus $g$.  For all $i \in \{ 1,\dots,n \}$ we can find an element $[f] \in \Mod (\Sigma^\circ)$ such that $f_*(c_i)=c_1$. It follows from Corollary \ref{one_boundary} that $q(c_i)=qf_*(c_i)=q(c_1)$ for all $i$.

Since the $a_i$ and $b_i$ are represented by a simple non-separating loops based at $x_0 \subset \partial \Sigma^\circ$, there is an element of $\Mod(\Sigma^\circ)$ that sends $a_1$ to $a_i$ (or $b_i$) for any $i$.  Therefore $q(a_i) = q(b_j)$ for all $i$ and $j$.  Similarly, since $a_1b_1$ and $a_1c_1$ are also represented by simple non-separating loops based at $x_0$, there exist $[f],[h] \in \Mod(\Sigma^\circ)$ such that $f_*(a_1)=a_1b_1$ and $h_*(a_1)= a_1c_1$.  We now have that
\[
q(a_1) =qf_*(a_1) = q(a_1b_1) = q(a_1)q(b_1), 
\]
and similarly $q(a_1)= q(a_1)q(c_1)$. Therefore $q(b_1)$ and $q(c_1)$ are the identity in the deck group $D$.  We may conclude that $q(a_i)$ and $q(b_i)$ are the identity in $D$ for all $i$, and $q(c_j)$ is the identity in $D$ for all $j$.  However, this implies $\ker(q) = \pi_1(\Sigma^\circ,x_0)$, contradicting the assumption that $p$ is non-trivial.

The genus of $\Sigma$ must therefore be zero and, as shown above, $\Sigma$ has a single boundary component, that is, $\Sigma$ is a disk. We have already shown that $q(c_i)=q(c_1)$ for all $i=1,\dots,n$ and so it follows that $p:\wt \Sigma \to \Sigma$ is a Burau cover.  This completes the proof of (i).

For (ii), recall the definitions of the fundamental groupoids $\vH$ and $\vG$ from the beginning of Section \ref{action_groupoids}. Let $\Psi:\Mod(\Sigma,\vB) \to \PAut(\vG)$ be the injective homomorphism given by the action of $\Mod(\Sigma,\vB)$ on the fundamental groupoid $\vG$. By Theorem \ref{lmodmain} it follows that $\Psi(\LMod(\Sigma, \vB)) = \LAut_\vH (\vG) \cap \Psi(\Mod(\Sigma,\vB))$. We have
\begin{align*}
[\Mod(\Sigma, \vB): \LMod(\Sigma, \vB)] &= [\Psi(\Mod(\Sigma, \vB)): \Psi(\LMod(\Sigma, \vB))]\\
&\leq [\PAut(\vG):\LAut_\vH(\vG)]\\
& < \infty.
\end{align*}
The first inequality follows from the fact that for subgroups $H,K$ of a group $G$, $[G:H] \geq [K:H \cap K]$.  The second inequality is by Lemma \ref{LAut_finite_index}.
\end{proof}

While we have shown that there are infinitely many covering spaces with the property that $\LMod(\Sigma, \vB) = \Mod(\Sigma, \vB)$, it is clear that this occurs only in a distinct minority of cases. We will see in the next section that the conditions for a covering space to satisfy $\SMod(\wt \Sigma) =  \Mod(\wt \Sigma)$ are even more severe.

\subsection{The case where everything is symmetric}

In this section we prove Theorem \ref{ClassySMod}. In particular, we show that the symmetric mapping class group $\SMod(\wt \Sigma)$ coincides with the mapping class group $\Mod(\wt \Sigma)$ in a very small number of cases. To prove the result we make use of the mapping class group action on homology. Recall that the Lefschetz fixed point theorem for smooth manifolds states
\[
\sum_{p \in \operatorname{fix}(f)} i(f,p) = \sum_{i=0}^\infty (-1)^i\tr(f_*:H_i(\Sigma;\QQ) \to H_i(\Sigma;\QQ)).
\]
where $i(f,p)$ is the index of the fixed point $p$ of the homeomorphism $f$ \cite{Lefschetz}. We will apply this result to our context of surfaces with boundary.

\begin{lem} \label{number_fixed_points}
Let $\Sigma$ be a compact oriented surface with boundary. Let $f$ be a finite-order, orientation-preserving homeomorphism of $\Sigma$. Then the fixed points of $f$ are isolated and the number of fixed points is equal to
\[
1 - \tr(f_*:H_1(\Sigma;\ZZ) \to H_1(\Sigma;\ZZ)).
\]
\end{lem}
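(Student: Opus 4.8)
The plan is to apply the Lefschetz fixed point theorem directly to $f$ acting on $\Sigma$, using the fact that $\Sigma$ is homotopy equivalent to a wedge of circles. First I would establish that the fixed points of $f$ are isolated: since $f$ has finite order, it is conjugate to an isometry of $\Sigma$ equipped with a suitable hyperbolic (or flat, in the exceptional low-complexity cases) metric with geodesic boundary, and a nontrivial finite-order isometry of a surface has isolated fixed points. Alternatively, one can argue that near a fixed point $p$, after averaging a Riemannian metric over the cyclic group $\langle f\rangle$, the homeomorphism $f$ is locally conjugate to a rotation about $p$, so $p$ is isolated and has index $i(f,p) = 1$. The key point is that an orientation-preserving finite-order homeomorphism of a surface has \emph{every} fixed point of index exactly $1$ (it looks like a nontrivial rotation locally, and a rotation by a nonzero angle has index $1$).

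Next I would compute the right-hand side of the Lefschetz formula. Since $\Sigma$ is a compact surface with nonempty boundary, it deformation retracts onto a one-dimensional CW complex (a wedge of circles), so $H_i(\Sigma;\QQ) = 0$ for $i \geq 2$. We have $H_0(\Sigma;\QQ) \cong \QQ$ with $f_*$ acting as the identity (so its trace is $1$), and $H_2(\Sigma;\QQ) = 0$. Therefore the alternating sum of traces reduces to $\tr(f_* : H_0) - \tr(f_* : H_1) = 1 - \tr(f_* : H_1(\Sigma;\QQ) \to H_1(\Sigma;\QQ))$. Since $H_1(\Sigma;\ZZ)$ is free abelian, the trace of $f_*$ on $H_1(\Sigma;\QQ)$ equals the trace of $f_*$ on $H_1(\Sigma;\ZZ)$, which is an integer. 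Combining this with the left-hand side, which is simply the number of fixed points of $f$ (each contributing index $1$), gives exactly the claimed formula.

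The main obstacle, and the step requiring the most care, is justifying that each fixed point has index $+1$ and that fixed points are isolated — in other words, upgrading "finite-order homeomorphism" to something with enough local rigidity to invoke Lefschetz. This is where one invokes either a smoothing/averaging argument (producing an equivariant metric, so that $\langle f \rangle$ acts by isometries and $f$ is locally a rotation) or a direct appeal to the resolution of the Nielsen realization problem for cyclic groups, which lets us replace $f$ by an honest finite-order diffeomorphism in its (boundary-fixing) mapping class without changing the homological data. A minor subtlety is that $f$ fixes $\partial\Sigma$ pointwise, so there are infinitely many fixed points on the boundary unless one is careful; here one should note that the statement implicitly concerns the relevant setting where $\partial\Sigma$ is collapsed or where $f$ is considered on the interior, or one applies Lefschetz to the surface obtained by capping boundary components with once-punctured disks so the boundary fixed points do not contribute — I would spell out in the proof exactly which version of $\Sigma$ the homology is being taken on so that "the fixed points of $f$ are isolated" is literally true. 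Once that setup is fixed, the remainder is the routine homology computation sketched above.
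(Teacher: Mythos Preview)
Your approach is essentially identical to the paper's: average a Riemannian metric over $\langle f\rangle$ to make $f$ an isometry, deduce that fixed points are isolated with local index $+1$ because $f$ acts as a rotation on each tangent space, and then compute the Lefschetz number using $H_i(\Sigma;\QQ)=0$ for $i\geq 2$ and $\tr(f_*\mid H_0)=1$.

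The only issue is your final paragraph. You worry that $f$ fixes $\partial\Sigma$ pointwise and hence has infinitely many boundary fixed points, but the lemma does \emph{not} assume this: $f$ is merely a finite-order orientation-preserving homeomorphism of $\Sigma$, not a boundary-fixing one. In the paper's applications $f$ is a nontrivial deck transformation, which acts freely on $\partial\wt\Sigma$. Once $f$ is made an isometry, a nontrivial orientation-preserving isometry cannot fix an arc pointwise (it would then be the identity on a neighbourhood, hence everywhere), so the isolated-fixed-point claim holds as stated and no capping or collapsing of boundary is needed. Drop that paragraph and your proof matches the paper's.
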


\begin{proof}
We will first prove that the fixed points are isolated. Let $f$ have order $k$ and let $\mu$ be a Riemannian metric on $\Sigma$. Define the Riemannian metric
\[
\ol \mu := \sum_{i=1}^k (f^k)^*\mu.
\]
Then $f^*\ol \mu = \ol \mu$ and so $f$ is an isometry. Since $f$ is orientation-preserving its fixed points must be isolated. Let $p \in \Sigma$ be such a fixed point and let $T_p \Sigma \cong \RR^2$ be the tangent space. Now, all orientation-preserving isometries of $\RR^2$ that fix the origin are rotations about the origin. We therefore have that $f$ induces a rotation $T_p \Sigma \to T_p \Sigma$ and so $i(f,p) = 1$.

For a surface with boundary, $H_i(\Sigma;\QQ) \cong \{0\}$ for all $i \geq 2$. Furthermore, $H_0(\Sigma;\QQ) \cong \QQ$ and $f_*:H_0(\Sigma;\QQ) \to H_0(\Sigma;\QQ)$ is the identity map. It follows that $\tr(f_*:H_0(\Sigma;\QQ) \to H_0(\Sigma;\QQ)) = 1$. Note that since the first homology group is free abelian we may replace the coefficients with $\ZZ$. Finally, since the index of each fixed point is $1$, we have that the number of fixed points is equal to
\[
1 - \tr(f_*:H_1(\Sigma;\ZZ) \to H_1(\Sigma;\ZZ))
\]
completing the proof.
\end{proof}

\begin{cor} \label{nontrivial_action}
Suppose $\Sigma$ is a compact orientable surface of genus $g$ with $m \geq 1$ boundary components other than a disk or an annulus.  Let $f \in \Homeo^+(\Sigma)$ be a finite-order, orientation-preserving homeomorphism of $\Sigma$.  Then $f$ acts non-trivially on $H_1(\Sigma;\ZZ)$.
\end{cor}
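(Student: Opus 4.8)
The plan is to derive a contradiction from the fixed-point count in Lemma \ref{number_fixed_points}. Assume $f \ne \id$ (the only case relevant to us; e.g.\ $f$ a nontrivial deck transformation) and suppose, toward a contradiction, that $f$ acts trivially on $H_1(\Sigma;\ZZ)$.

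First I would record the homology of $\Sigma$: a compact orientable surface of genus $g$ with $m \ge 1$ boundary components deformation retracts onto a wedge of $2g+m-1$ circles, so $H_1(\Sigma;\ZZ) \cong \ZZ^{2g+m-1}$. If $f_*$ is the identity on this group, then
\[
\tr\bigl(f_* : H_1(\Sigma;\ZZ) \to H_1(\Sigma;\ZZ)\bigr) = 2g+m-1 .
\]
By Lemma \ref{number_fixed_points} the number of fixed points of $f$ equals $1 - (2g+m-1) = 2 - 2g - m$. Since $f$ is finite-order and orientation-preserving, the proof of Lemma \ref{number_fixed_points} shows this is a genuine count of finitely many isolated fixed points, hence non-negative, so $2 - 2g - m \ge 0$, i.e.\ $2g + m \le 2$.

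The endgame is pure bookkeeping: with $g \ge 0$ and $m \ge 1$, the inequality $2g + m \le 2$ forces $(g,m) \in \{(0,1),(0,2)\}$, so $\Sigma$ is a disk or an annulus, contrary to hypothesis. Hence $f$ acts nontrivially on $H_1(\Sigma;\ZZ)$.

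I do not anticipate a substantive obstacle; the one subtlety worth flagging is that Lemma \ref{number_fixed_points} genuinely needs $f$ to have isolated fixed points, which fails precisely when $f = \id$ --- a case in which $f$ does act trivially on homology but which is excluded in every application of the corollary, since the deck transformations to which it will be applied act freely on $\wt\Sigma^\circ$.
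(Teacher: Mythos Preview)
Your argument is correct and is essentially identical to the paper's proof: compute $\operatorname{rk} H_1(\Sigma;\ZZ)=2g+m-1$, apply Lemma~\ref{number_fixed_points} to get $2g+m\le 2$, and conclude $\Sigma$ is a disk or annulus. Your remark about the $f=\id$ case is a fair caveat that the paper leaves implicit (both Lemma~\ref{number_fixed_points} and the corollary tacitly assume $f\neq\id$).
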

\begin{proof}
If $f$ acts trivially on $H_1(\Sigma;\ZZ)$ then $\tr(f_*:H_1(\Sigma;\ZZ) \to H_1(\Sigma;\ZZ)) = 2g + m - 1$ by choice of a natural basis of $H_1(\Sigma ; \ZZ)$. By Lemma \ref{number_fixed_points} we must have $1 - (2g+m-1) \geq 0$ and so $2g + m \leq 2$.  This only occurs when $g = 0$ and $m = 1,2$, or equivalently, when $\Sigma$ is a disk or an annulus.
\end{proof}

The next result shows that a hyperelliptic involution has a unique action on homology up to conjugation. The proof follows from Lemma \ref{number_fixed_points} and an argument similar to that of \cite[Proposition 7.15]{Primer}.

\begin{lem} \label{conjugate_hyperelliptic_with_boundary}
Let $\Sigma$ be a surface of genus $g \geq 1$ with a single boundary component. Suppose $f_1,f_2 \in \Homeo^+(\Sigma)$ are order 2 homeomorphisms such that $(f_1)_* = (f_2)_* = -I:H_1(\Sigma;\ZZ) \to H_1(\Sigma;\ZZ)$.  Then $f_1$ and $f_2$ are conjugate in $\Homeo^+(\Sigma)$.
\end{lem}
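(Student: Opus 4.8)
The strategy is to realize both $f_1$ and $f_2$ as the standard hyperelliptic involution on a fixed model surface, and to do so it suffices to produce a homeomorphism $\Sigma \to \Sigma$ intertwining $f_1$ with a chosen standard model $\iota$; since conjugacy is an equivalence relation, matching each $f_i$ to $\iota$ gives $f_1 \sim f_2$. So fix the model: realize $\Sigma$ as the closed surface $\Sigma_g$ minus an open disk, sitting in $\mathbb{R}^3$ symmetrically about an axis, with $\iota$ the rotation by $\pi$ about that axis (the boundary circle being invariant, and after an isotopy supported near $\partial\Sigma$ we may assume $\iota$ fixes $\partial\Sigma$ pointwise — or simply allow $\iota$ to act as the antipodal map on $\partial\Sigma$ and note this does not affect the conclusion once we pass back to the mapping class group, exactly as in \cite[Proposition 7.15]{Primer}).

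\textbf{Key steps.} First I would count fixed points: by Lemma \ref{number_fixed_points}, since $(f_i)_* = -I$ on $H_1(\Sigma;\ZZ)$ which has rank $2g$ (one boundary component), $\tr((f_i)_*) = -2g$, so $f_i$ has exactly $1 + 2g$ fixed points, all isolated, and each is a center of rotation by $\pi$ in suitable local coordinates (from the isometry argument in Lemma \ref{number_fixed_points}). Second, analyze the quotient: $f_i$ generates a $\ZZ/2\ZZ$ action on $\Sigma$, and the quotient orbifold $\Sigma/\langle f_i\rangle$ is a surface with $1+2g$ cone points of order $2$ (plus a boundary circle, which is either fixed pointwise or folded). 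A Riemann–Hurwitz / Euler characteristic computation then pins down the underlying surface of the quotient: $\chi(\Sigma) = 1 - 2g$, so $2\chi(\Sigma/\langle f_i\rangle) - (1+2g) = 1-2g$, giving $\chi(\Sigma/\langle f_i\rangle) = 1$, i.e. the quotient is a disk with $2g+1$ branch points. Thus both $f_1$ and $f_2$ exhibit $\Sigma$ as the double cover of a disk branched over $2g+1$ points — this is exactly a $2$-sheeted Burau (hyperelliptic) cover. Third, invoke the change-of-coordinates principle: any two such branched covers are equivalent, because any homeomorphism of the base disk permuting the branch points lifts, and $\Mod$ of a disk with marked points acts transitively on configurations; hence there is a homeomorphism $h:\Sigma \to \Sigma$ with $h f_1 h^{-1} = f_2$. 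Concretely, one builds $h$ by first choosing an orientation-preserving homeomorphism of the two quotient disks matching up the branch loci, then lifting it through the two covers (the lift exists and is unique up to the deck transformation, and can be chosen orientation-preserving).

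\textbf{Main obstacle.} The delicate point is the boundary behavior: $f_i$ is required to be an honest homeomorphism of $\Sigma$, but an order-$2$ orientation-preserving homeomorphism cannot fix a whole boundary circle pointwise and also act as a nontrivial rotation nearby unless we are careful — indeed near $\partial\Sigma$ an order-two orientation-preserving map with $-I$ on homology must act on the boundary circle either trivially or as the antipodal involution, and only the latter is compatible with being part of the rotational model (the former would force the boundary circle to bound a disk of fixed points, contradicting isolated fixed points unless that disk is collapsed). I would handle this by working with $\Sigma$ capped off or, following \cite[Proposition 7.15]{Primer}, by observing that the conjugacy class in $\Homeo^+(\Sigma)$ is unaffected: after capping the boundary with a once-punctured disk on which $f_i$ acts as rotation by $\pi$ fixing the puncture, $f_i$ extends to the closed surface where the classical statement applies, and the conjugating homeomorphism can be taken to preserve the capping disk and hence restrict to the desired $h$ on $\Sigma$. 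The remaining verifications — that the lift of the base homeomorphism is orientation-preserving and genuinely conjugates $f_1$ to $f_2$ — are routine covering-space bookkeeping.
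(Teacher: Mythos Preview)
Your proposal is correct and follows essentially the same approach the paper indicates: the paper does not give a detailed argument but simply says the result follows from Lemma~\ref{number_fixed_points} together with an argument analogous to \cite[Proposition~7.15]{Primer}, and you have spelled out precisely that argument (fixed-point count $2g+1$, quotient is a disk via Riemann--Hurwitz, then equivalence of hyperelliptic double covers of the disk). One small caution: in your capping-off alternative, the conjugating homeomorphism on the closed surface need not a priori preserve the capping disk, so your direct argument via lifting a homeomorphism of the quotient disks is the cleaner route---and it is already sufficient on its own.
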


Throughout the proof of Theorem \ref{ClassySMod} we will repeatedly use the fact that if $c$ is an isotopy class of simple closed curves then every power of the Dehn twist $T_c$ is an element of $\SMod(\wt \Sigma)$ if and only if $d(c)=c$ for all $d \in D$, where $D$ is the deck group.

\begin{proof}[Proof of Theorem \ref{ClassySMod}]
We start by proving that $\SMod(\wt \Sigma) = \Mod(\wt \Sigma)$ in the three cases stated in the theorem. First, if $\wt \Sigma$ is a disk then $\SMod(\wt\Sigma) = \Mod(\wt\Sigma)$ trivially. If $\wt \Sigma$ is an annulus, let $c$ be the unique unoriented isotopy class of an essential simple closed curve. For every homeomorphism $f$ of $\wt \Sigma$ we have that $f(c) = c$. Since $\Mod(\wt\Sigma) = \langle T_c \rangle$ it follows that $\SMod(\wt\Sigma) = \Mod(\wt\Sigma)$. Finally, suppose $\wt \Sigma$ is a torus with a single boundary component, and let $\iota \in \Homeo^+(\wt\Sigma)$ be a hyperelliptic involution. There exist two simple closed curves $a$ and $b$ whose isotopy classes are fixed by $\iota$ such that the Dehn twists $T_a$, $T_b$ generate $\Mod(\wt\Sigma)$. Therefore we have that $\SMod(\wt\Sigma) = \Mod(\wt\Sigma)$.

Conversely, suppose $\SMod(\wt\Sigma) = \Mod(\wt\Sigma)$ and $\wt\Sigma$ is neither a disk nor an annulus. Suppose $\wt \Sigma$ is a surface of genus $g \ge 0$ with $m \geq 1$ boundary components. There is a generating set $\mho = \{a_1,\ldots,a_{2g},x_1,\ldots,x_{m-1}\}$ of $H_1(\wt\Sigma;\ZZ)$ where each generator $a_i$ is represented by an essential simple closed curve and each $x_i$ is the homology class of a curve isotopic to a boundary component.

Let $d$ be a non-trivial element of the deck group $D$. It must be that $d$ preserves the unoriented isotopy class of every essential simple closed curve and so we see that $d_*:H_1(\wt\Sigma;\ZZ) \to H_1(\wt\Sigma;\ZZ)$ is given by the diagonal matrix \[
\begin{bmatrix} \epsilon_1 & & \\ & \ddots & \\ & & \epsilon_{2g+m-1}\end{bmatrix}
\]
with respect to the generating set $\mho$, where $\epsilon_i = \pm 1$ for all $i$. However, since $d$ is orientation-preserving, it must preserve the orientation of every boundary component, therefore $\epsilon_i = 1$ for all $i > 2g$.  It follows from Corollary \ref{nontrivial_action} that $g \ge 1$ and there is at least one $i \in \{1,\ldots,2g\}$ such that $\epsilon_i = -1$.

We now argue that $\wt \Sigma$ must have exactly one boundary component.
If $m \ge 2$ then there is at least one element in $\mho$ that is the homology class of a boundary component. Consider the homology classes $x_1 + a_i$ and $x_1 - a_i$, where $\epsilon_i = -1$. One of these is the homology class of an essential simple closed curve $c$. Since $d_*(x_1 \pm a_i) = x_1 \mp a_i$ we have that $d_*(c) \neq \pm c$ and so the unoriented isotopy class of $c$ is not preserved by $d$. Therefore $T_c \not\in\SMod(\wt\Sigma)$ and so $\SMod(\wt \Sigma) \ne \Mod(\wt \Sigma)$, a contradiction. It follows then that $m=1$.

The next step is to show $d$ is a hyperelliptic involution.  Suppose not, then by Corollary \ref{nontrivial_action} there are $i,j \in \{1,\ldots,2g\}$ such that $\epsilon_i = 1$ and $\epsilon_j = -1$. Similar to the above argument, we can find a curve $c$ such that $T_c \notin \SMod(\wt\Sigma)$. This implies that
\[
d_* = -I:H_1(\wt\Sigma;\ZZ) \to H_1(\wt\Sigma;\ZZ)
\]
for all non-trivial $d \in D$. Suppose $d_1,d_2 \in D$ are non-trivial elements.  Then $(d_1d_2)_* = I:H_1(\wt\Sigma;\ZZ) \to H_1(\wt\Sigma;\ZZ)$, so by Corollary \ref{nontrivial_action}, $d_1 = d_2$ and $\abs{D} = 2$.

Let $\iota$ be a hyperelliptic involution of $\wt \Sigma$, then $\iota_* = -I:H_1(\wt\Sigma;\ZZ) \to H_1(\wt\Sigma;\ZZ)$.  Since any conjugate of $\iota$ is a hyperelliptic involution, it follows from Lemma \ref{conjugate_hyperelliptic_with_boundary} that $D$ is generated by a hyperelliptic involution. Finally, if $g \geq 2$ then we can find a curve that is not fixed by $\iota$ (see Figure \ref{HyperCurve}), completing the proof of the first statement.
\begin{figure}[t]
\centering
\includegraphics[scale=0.22]{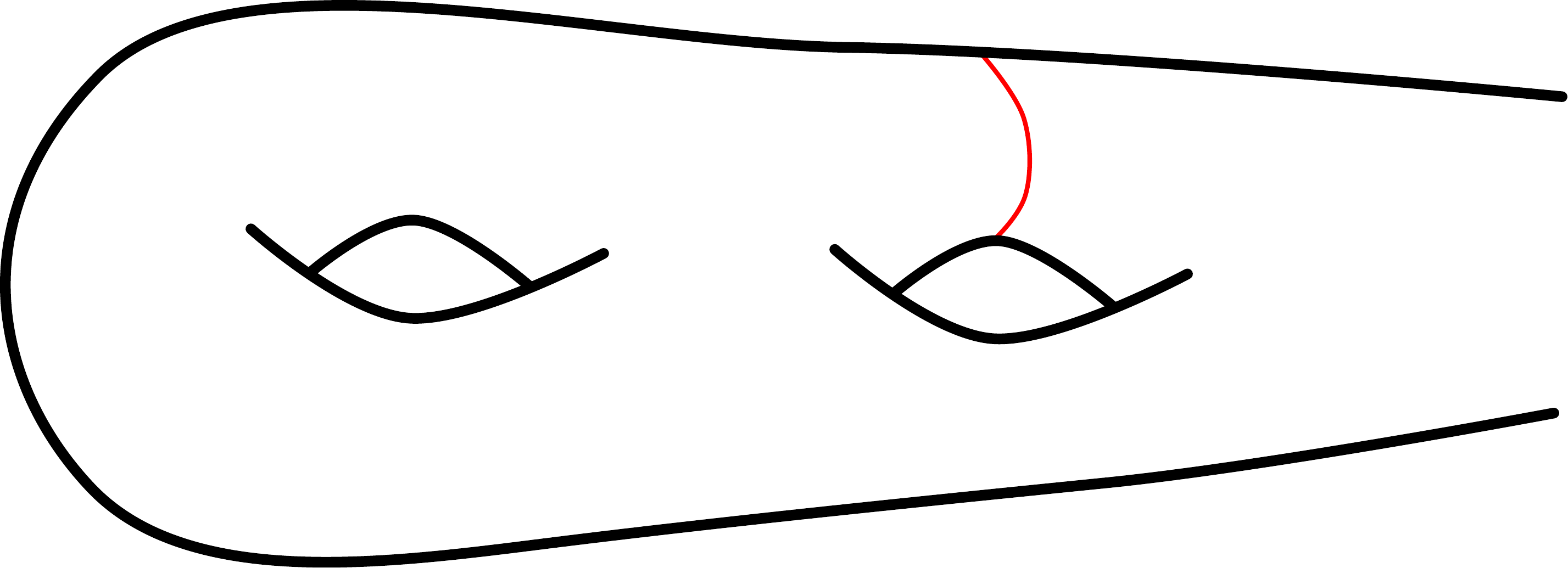}
\caption{A curve whose isotopy class is not preserved by the hyperelliptic involution for a surface with boundary $\Sigma$ of genus at least two.}
\label{HyperCurve}
\end{figure}

If $\SMod(\wt \Sigma) \ne \Mod(\wt \Sigma)$ then we may choose a Dehn twist $T_c \not \in \SMod(\wt \Sigma)$ and note that each power belongs to a different coset of $\SMod(\wt \Sigma)$ in $\Mod(\wt \Sigma)$. Since Dehn twists have infinite order, it follows that $\SMod(\wt \Sigma)$ is infinite-index in $\Mod(\wt \Sigma)$.
\end{proof}

\begin{RMK}
Suppose $p:\wt\Sigma \to \Sigma$ is a finite-sheeted, regular, possibly branched cover of compact surfaces without boundary with deck group $D$.  Combining the proof of Theorem 4 in \cite{BH} with the Neilsen realisation theorem for finite groups \cite{Kerckhoff} allows one to conclude that $\SMod(\wt\Sigma)$ is the normaliser of $D$ in $\Mod(\wt\Sigma)$.

When the surfaces in question have boundary, then $D$ is not a subgroup of $\Mod(\wt\Sigma)$.  However, $D$ and $\Mod(\Sigma)$ are both subgroups of $\Aut(\vK)$, where $\vK$ is the groupoid defined in Section \ref{action_groupoids}.

In light of both the normaliser result just stated for closed surfaces, and Theorem \ref{lmodmain} for $\LMod(\Sigma,\vB)$, we conjecture that $\SMod(\wt\Sigma) = \{[f] \in \Mod(\wt\Sigma):f_* \in \SAut(\vK)\}$.  Unfortunately, a proof seems out of reach at the moment.
\end{RMK}

\section{Non-geometric embeddings of braid groups}\label{EBG}

In this section we will investigate a family of injective homomorphisms from the braid group to mapping class groups. We will refer to such a homomorphism as a \emph{braid group embedding}. We first recall the definition of the Burau covers from Section \ref{Introduction}.

\subsection*{Burau covers}
Pick a point $x \in \partial {\bf D}_n$ and let $\gamma_i \in \pi_1({\bf D}_n,x)$ be the homotopy class of a loop surrounding solely the $i$th puncture anti-clockwise.  Then $\{\gamma_1,\ldots,\gamma_n\}$ generates $\pi_1({\bf D}_n,x)$.  For each $k\geq 2$, define a homomorphism 
\begin{align*}
q_k:\pi_1({\bf D}_n,x) & \to \ZZ/k\ZZ \\ 
\gamma_i & \mapsto 1
\end{align*} for all $i$. The kernel of $q_k$ determines a $k$-sheeted cyclic branched cover $p_k:\Sigma_g^m \to \Sigma_0^1$ branched at $n$ points. Here $m = \gcd(n,k)$ and $g = 1 - \frac 12(k + n -nk + m)$.

In Theorem \ref{ClassyLMod} it was shown that $\LMod(\Sigma, \vB) = \Mod(\Sigma, \vB)$ if and only if $\Sigma$ is a disk and $p_k : \Sigma^m_g \to \Sigma^1_0$ is a $k$-sheeted Burau cover. We can therefore define the following braid group embedding;
\[
\beta_k : B_n \cong \Mod(\Sigma_0^1, \vB) = \LMod(\Sigma_0^1, \vB) \cong \SMod(\Sigma^m_g) \hookrightarrow \Mod(\Sigma^m_g).
\]
The first isomorphism is well known, the equality comes from Theorem \ref{ClassyLMod}, and the second isomorphism is a consequence of the Birman-Hilden theorem.

Let $\{\sigma_1, \dots, \sigma_{n-1}\}$ be the standard generators of $B_n$. It is known that $\beta_2(\sigma_i) = T_{c_i}$ where $c_i$ is some non-separating curve for all $i \in \{1, \dots, n-1\}$. Furthermore, the deck group $D \cong \ZZ / 2 \ZZ$ is generated by a hyperelliptic involution \cite[Section 9.4]{Primer}.

In this section we will describe the image of the standard braid generators under $\beta_k$ where $k \ge 3$. In particular we show that $\beta_k$ is a non-geometric embedding of the braid group, that is, $\beta_k(\sigma_i)$ is \emph{not} a Dehn twist. In order to describe the image of a single braid generator it suffices to consider the embeddings
\[
\beta_{2g+1} : B_2 \hookrightarrow \Mod(\Sigma^1_g) \eand \beta_{2g+2} : B_2 \hookrightarrow \Mod(\Sigma^2_g),
\]
for integers $g > 0$. In other words, we will study the Burau covers
\[
p_{2g+1} : \Sigma^1_g \to \Sigma^1_0 \eand p_{2g+2} : \Sigma^2_g \to \Sigma^1_0,
\]
in each case branched at two points. We will deal with the two cases separately although the techniques used in each case are similar.

\subsection{Odd Burau}\label{Odd}
\begin{figure}[t]
\begin{center}
\labellist \hair 1pt
	\pinlabel {$\iota$} at 380 170
	\pinlabel {$\iota$} at 1320 165
	\pinlabel {(i)} at 393 0
	\pinlabel {(ii)} at 1335 0
    \endlabellist
\includegraphics[scale=0.205]{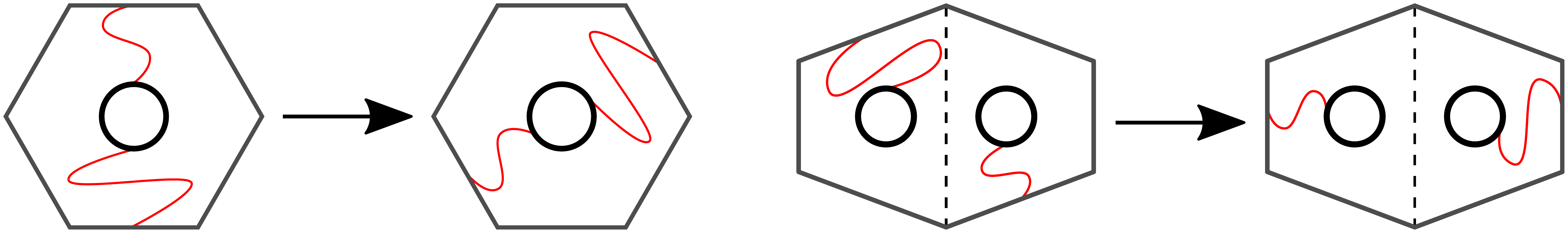}
\end{center}
\caption{(i) A generator of the deck group $D \cong \ZZ/3\ZZ$ of the $3$-sheeted Burau cover.  (ii) A generator of the deck group $D \cong \ZZ/4\ZZ$ of the 4-sheeted Burau cover.}
\label{Deck}
\end{figure}
First we consider the braid group embedding $\beta_{2g+1}$ given above. We will define an element $N$ of $\Mod(\Sigma_g^1)$ and then prove that the isomorphism
\[
\Pi : \SMod(\Sigma^1_g) \to \LMod(\Sigma^1_0, \vB)
\]
sends $N$ to the standard generator of $\Mod(\Sigma_0^1,\vB)$. Recall that we can represent a closed surface of genus $g$ by a regular $(4g+2)$-gon, centred at the origin, with opposite sides identified. If we remove an open disk about the centre we arrive at a representation of $\Sigma_g^1$. Label this representation $P$ and let $\iota$ be the anti-clockwise rotation of $P$ about its centre by $2\pi / (2g+1)$ (see Figure \ref{Deck}(i) for $\iota$ when $g= 1$). The two unique vertices of $P$ are fixed by $\iota$. We see that the quotient space $\Sigma_g^1 / \langle \iota \rangle$ is homeomorphic to $\Sigma_0^1$, and the quotient map is a covering map branched at two points. Furthermore, around both fixed points $\iota$ is locally a rotation by $2\pi / (2g+1)$ anti-clockwise, therefore the associated covering space is the $(2g+1)$-sheeted Burau cover of $\Sigma_0^1$ with deck group $D \cong \mathbb{Z} / (2g+1)\mathbb{Z}$.

We will write $p_{2g+1} : {\Sigma_g^1}^\circ \to {\bf D}_2$ for the associated unbranched cover. Let $x \in \partial {\bf D}_2$ and let $a$ and $b$ be elements of $\pi_1({\bf D}_2,x)$ such that $a$ is represented by a loop that surrounds a single marked point and $b$ is represented by a loop isotopic to $\partial {\bf D}_2$ as in Figure \ref{Generators}(i).

The elements $a$ and $b$ generate $\pi_1({\bf D}_2,x)$. Denote the full preimage $p_{2g+1}^{-1}(x)$ by $\{\wt x_i\}$ indexed by elements of $ \mathbb{Z} / (2g+1)\mathbb{Z}$ such that $\iota(\wt x_i) = \wt x_{i+1}$. Similarly we define $(p_{2g+1})_*^{-1}(a) = \{a_i\}$ and $(p_{2g+1})_*^{-1}(b) = \{b_i\}$ where $s(a_i)=s(b_i)=\tilde x_i$.  Furthermore, we have that $\iota_* (a_i) = a_{i+1}$ and $\iota_* (b_i) = b_{i+1}$, see Figure \ref{Generators}(ii). The set $\{a_i,b_i\}$, indexed by elements of $\ZZ / (2g+1) \ZZ$, generates the fundamental groupoid $\pi_1({\Sigma_g^1}^\circ, \{\wt x_i\})$, a fact which follows from Lemma \ref{locally_bijective}.

\subsection*{The odd notch}
We let $N$ denote the mapping class in $\Mod(\Sigma_g^1)$ represented by the homeomorphism that rotates the edges of $P$ by $2 \pi / (4g+2)$ and fixes the single boundary component at the centre.  See Figure \ref{Notch} for an image of $N$ when $g = 1$.

In the following lemma we will write $H$ for the half twist in $\Mod(\Sigma_0^1, \vB)$ and for the induced automorphism of $\pi_1({\bf D}_2,x)$ such that $H(a)=ba^{-1}$.
\begin{figure}[t]
\begin{center}
\labellist  \hair 1pt
    \pinlabel {(i)} at 340 -35
    \pinlabel {(ii)} at 1175 -35
    \pinlabel {(iii)} at 2030 -35
	\pinlabel {\red $a$} at 330 240
	\pinlabel {\color{DarkBlue} $b$} at 415 415
	\pinlabel {\red $a_0$} at 890 440
	\pinlabel {\red $a_2$} at 1450 440
	\pinlabel {\red $a_1$} at 1120 100
	
	\pinlabel {\red $a_0$} at 2050 420
	\pinlabel {\red $a_1$} at 1825 520
	\pinlabel {\red $a_2$} at 2300 255
	\pinlabel {\red $a_3$} at 1800 80
    \endlabellist
\includegraphics[scale=0.145]{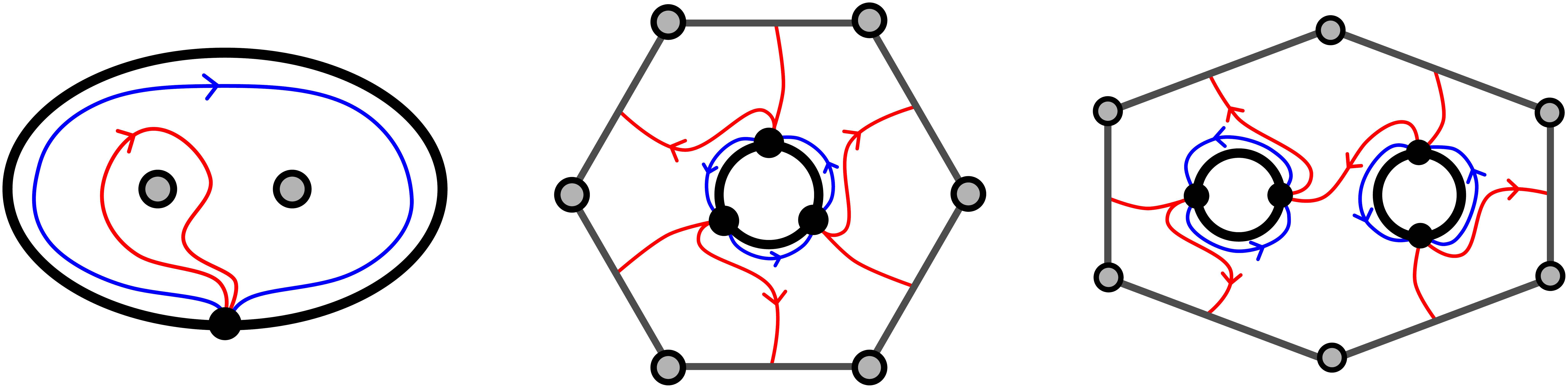}
\end{center}
\caption{(i) Generators of the fundamental group of ${\bf D}_2$. (ii) Generators for the fundamental groupoid $\pi_1({\Sigma_{1}^1}^\circ,\{\tilde x_0,\tilde x_1,\tilde x_2\})$. (iii) Generators for $\pi_1({\Sigma_1^2}^{\circ},\{\tilde x_0,\tilde x_1,\tilde x_2,\tilde x_3\})$.}
\label{Generators}
\end{figure}
\begin{lem}\label{OddBurau}
Given the Burau cover $p_{2g+1} : \Sigma_g^1 \to \Sigma^1_0$ the half twist $H \in \Mod(\Sigma_0^1, \vB)$ lifts to the mapping class $N \in \Mod(\Sigma_g^1)$.
\end{lem}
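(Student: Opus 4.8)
The plan is to route everything through the Birman--Hilden isomorphism $\Pi : \SMod(\Sigma_g^1) \to \LMod(\Sigma_0^1,\vB)$ of Theorem~\ref{LModSMod}. Since $p_{2g+1}$ is a Burau cover, Theorem~\ref{ClassyLMod} gives $\LMod(\Sigma_0^1,\vB) = \Mod(\Sigma_0^1,\vB) \cong B_2 \cong \ZZ$, an infinite cyclic (hence torsion-free) group generated by the half twist $H$. It therefore suffices to show (a) that $N \in \SMod(\Sigma_g^1)$, so that $\Pi(N)$ makes sense, and (b) that $\Pi(N) = H$; the latter is precisely the assertion that $H$ lifts to $N$.

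For (a) I would pick the representative of $N$ that is a rigid rotation of $P$ by $2\pi/(4g+2)$ outside a collar of the central boundary circle and is damped to the identity, rotationally symmetrically, inside that collar. Since $\iota$ is a rotation of $P$ about the same centre, this representative commutes with $\iota$ and hence with the whole deck group $D = \langle\iota\rangle$; as it also fixes $\partial\Sigma_g^1$ pointwise, Proposition~\ref{SCentral} puts $N$ in $\SMod(\Sigma_g^1)$.

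For (b), rather than computing $\Pi(N)$ directly, I would compute $\Pi$ on a large power of $N$. First, $N$ being a rotation by $2\pi/(4g+2)$ on $P$ and a damped rotation in the boundary collar, one checks (using that this rotation has order $4g+2$ on the closed surface) that $N^{4g+2} = T_{\partial\Sigma_g^1}$, the Dehn twist about a curve parallel to $\partial\Sigma_g^1$. Second, restricting $p_{2g+1}$ to a collar of $\partial\Sigma_g^1$ realises it as a \emph{connected} $(2g+1)$-fold cover of a collar of $\partial\Sigma_0^1$ --- connectedness because $[\partial\Sigma_0^1]$ has image $2 \in \ZZ/(2g+1)\ZZ$ with $\gcd(2,2g+1)=1$, and there is a single boundary component upstairs --- and pushing a Dehn twist forward along a degree-$(2g+1)$ cover of annuli multiplies the amount of twisting by $2g+1$, so $\Pi(T_{\partial\Sigma_g^1}) = T_{\partial\Sigma_0^1}^{\,2g+1}$. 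Finally, in $\Mod(\Sigma_0^1,\vB) \cong B_2$ one has $T_{\partial\Sigma_0^1} = H^2$, whence
\[
\Pi(N)^{4g+2} \;=\; \Pi\!\left(N^{4g+2}\right) \;=\; \Pi\!\left(T_{\partial\Sigma_g^1}\right) \;=\; T_{\partial\Sigma_0^1}^{\,2g+1} \;=\; H^{4g+2}.
\]
Because $\Mod(\Sigma_0^1,\vB)$ is infinite cyclic, this forces $\Pi(N) = H$.

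The step I expect to be the main obstacle is the identity $\Pi(T_{\partial\Sigma_g^1}) = T_{\partial\Sigma_0^1}^{\,2g+1}$: it needs a clean description of the restriction of the branched cover to a collar of the boundary and a careful account of how a boundary Dehn twist descends, and it is here that all the handedness conventions (for $\iota$, for $N$, and for the two Dehn twists) must be reconciled so that no spurious inverse appears. As a fallback for part (b), one could instead evaluate the induced automorphism $N_*$ on the groupoid generators $\{a_i,b_i\}$ of $\vK$ and compare $p_*N_*$ with $H$ on the generators $a,b$ of $\pi_1({\bf D}_2,x)$ via the commutative diagram of Lemma~\ref{commutative_groupoid_diagram}; there the difficulty becomes tracking the arc $a_0$ through the side identifications of $P$.
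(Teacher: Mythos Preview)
Your proof is correct but follows a genuinely different route from the paper's. The paper works entirely inside the groupoid framework: it verifies $N_* \in \SAut(\vK)$ by checking $N_*\iota_* = \iota_* N_*$ on the generators $a_i,b_i$ (using the explicit formulae $N_*(a_i)=b_ia_{i+1+g}^{-1}$, $N_*(b_i)=b_i$), then computes $\Pi(N_*)(a)=p_*N_*(a_i)=ba^{-1}=H(a)$ and $\Pi(N_*)(b)=b=H(b)$ directly, and finishes with the commutative square of Lemma~\ref{commutative_groupoid_diagram}. Your argument stays at the level of mapping classes: you exhibit a rotationally symmetric representative of $N$ to get $N\in\SMod(\Sigma_g^1)$ via Proposition~\ref{SCentral}, then compute $\Pi$ only on the $(4g+2)$-th power and cancel using that $B_2\cong\ZZ$ has unique roots. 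The paper's approach is more hands-on but feeds directly into the later computation in Proposition~\ref{ChainTwist}, where the explicit action of $N_*$ on the $a_i$ is reused; yours is cleaner and sidesteps tracking arcs through the side identifications of $P$, at the cost of the identity $\Pi(T_{\partial\Sigma_g^1})=T_{\partial\Sigma_0^1}^{\,2g+1}$ and the handedness bookkeeping you rightly flag. Incidentally, your ``fallback'' is exactly the paper's proof.
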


\begin{proof}
Let $\vG = \pi_1({\bf D}_2,x)$ and let $\vK$ be the fundamental groupoid \linebreak $\pi_1({\Sigma_g^1}^\circ, \{\wt x_i\})$. We will abuse notation by writing $N$ for its image in $\Aut(\vK)$ and $H$ for its image in $\Aut(\vG)$ under the injective natural homomorphisms. We need to show that $N \in \SAut(\vK)$ as defined in Section \ref{fundamental_groupoid}. Since the deck group $D$ is generated by $\iota$, this is equivalent to showing that $N \iota = \iota N$ as automorphisms of $\vK$. It can be seen from Figure \ref{Notch} that $N (a_i) = b_i a_{i+1+g}^{-1}$ and that $N (b_i) = b_i$.

It follows then that
\[
N \iota (a_i) = N (a_{i+1}) = b_{i+1} a_{i+2+g}^{-1} = \iota (b_i a_{i+1+g}^{-1}) = \iota N (a_i), \eand
\]
\[
N \iota (b_i) = N (b_{i+1}) = b_{i+1} = \iota (b_i) = \iota N (b_i).
\]
Since the set $\{a_i, b_i\}$ generates the fundamental groupoid, we are done.

We will now show that the image of $N$ in $\LAut_{\vH}(\vG)$ under the isomorphism $\Pi$ of Lemma \ref{groupoid_birman_hilden} is equal to $H$. This makes sense since $\LMod(\Sigma_0^1, \vB) = \Mod(\Sigma_0^1, \vB)$ and so from Theorem \ref{lmodmain}, we conclude that  $H \in \LAut_{\vH}(\vG)$. We now have
\[
\Pi (N) (a) = p_* N (a_i) = p_* (b_i a_{i+1+g}^{-1}) = b a^{-1} = H(a), \eand
\]
\[
\Pi (N) (b) = p_* N (b_i) = p_* (b_i) = b = H(b).
\]
So $\Pi ( N ) = H$ and since the diagram
\[
\begin{tikzcd}
    \SMod(\Sigma^1_g) \arrow[hookrightarrow]{r}{\wt\Psi} \arrow{d}{\Pi}[swap]{\cong} & \SAut(\vK) \arrow{d}{\Pi}[swap]{\cong} \\
    \Mod(\Sigma^1_0,\vB) \arrow[hookrightarrow]{r}{\Psi} & \LAut_{\vH}(\vG)
\end{tikzcd}
\]
from Lemma \ref{commutative_groupoid_diagram} commutes, the mapping class $N$ is indeed the lift of the half twist $H$.
\end{proof}

\subsection{Even Burau}\label{Even}
We will now move on to the braid group embedding $\beta_{2g+2} : B_2 \hookrightarrow \Mod(\Sigma^2_g)$ given above. As in the odd case we will define an element of $\Mod(\Sigma^2_g)$ and then prove that it is the lift of a half twist $H$. We take $H$ to be the half twist such that $H_*(a)=ba^{-1}$ for $a,b \in \pi_1({\bf D}_2,x)$ as before. We want to find a polygonal representation of $\Sigma_g^2$. We take a regular $(4g+2)$-gon with opposite sides identified. This time, we remove two open disks as shown in Figure \ref{Deck} and label the representation $P$.

\begin{figure}[t]
\begin{center}
\labellist  \hair 1pt
	\pinlabel {$H$} at 290 110
	\pinlabel {$H$} at 985 110
	\pinlabel {$N$} at 290 370
	\pinlabel {$N$} at 985 360
	\endlabellist
\includegraphics[scale=0.266]{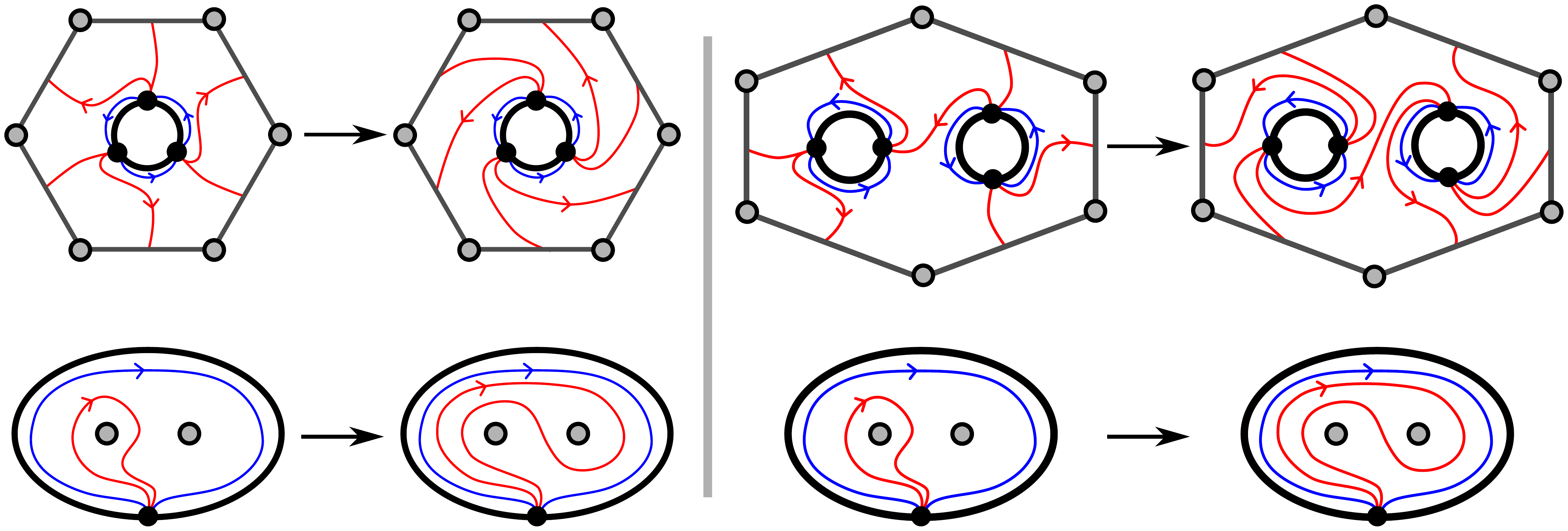}
\end{center}
\caption{The mapping class $H \in \Mod({\bf D}_2)$ and its lifts in the 3 and 4-sheeted Burau covers.  Lemma \ref{OddBurau} shows that $N$ is the lift of $H$.}
\label{Notch}
\end{figure}

We define an order $2g+2$ homeomorphism $\iota$ as follows:
\begin{enumerate}
\item Cut $P$ along a straight line connecting the top and bottom vertices and label the resulting $(2g+2)$-gons $P_L$ and $P_R$.
\item Rotate both $P_L$ and $P_R$ anti-clockwise by $2\pi / (2g+2)$ and reattach them along the straight line connecting the top and bottom vertices.
\item Rotate $P$ by $\pi$.
\end{enumerate}
For a picture of $\iota$ when $g=1$, see Figure \ref{Deck}(ii).  While this homeomorphism of $\Sigma_g^2$ is substantially more complicated than the one described in Section \ref{Odd} it shares many properties. Both vertices of $P$ are fixed by $\iota$ however, this time, locally $\iota$ is a clockwise rotation by $2\pi / (2g+2)$. It follows that the quotient space $\Sigma_g^2 / \langle \iota \rangle$ is homeomorphic to $\Sigma_0^1$ and the associated covering space is the $(2g+2)$-sheeted Burau cover of $\Sigma_0^1$ with deck group $D \cong \mathbb{Z} / (2g+2) \mathbb{Z}$.

\subsection*{The even notch}
We define $N \in \Mod(\Sigma_g^2)$ to be the mapping class represented by the homeomorphism that rotates the edges of both $P_L$ and $P_R$ by $2 \pi / (2g+2)$ and fixes the boundary components.  See Figure \ref{Notch} for an image of $N$ when $g = 1$.

Using the same method as the proof of Lemma \ref{OddBurau} we arrive at the following result.

\begin{lem}\label{EvenBurau}
Given the Burau cover $p_{2g+2} : \Sigma_g^2 \to \Sigma^1_0$ the half twist $H \in \Mod(\Sigma_0^1, \vB)$ lifts to the mapping class $N \in \Mod(\Sigma_g^1)$.
\end{lem}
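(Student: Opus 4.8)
The plan is to mirror the proof of Lemma \ref{OddBurau} essentially verbatim, working with the fundamental groupoid $\vK = \pi_1({\Sigma_g^2}^\circ, \{\wt x_i\})$ indexed over $\ZZ/(2g+2)\ZZ$ and the groupoid $\vG = \pi_1({\bf D}_2, x)$, together with the star of generators $\{a_i, b_i\}$ described in Section \ref{Even} and pictured in Figure \ref{Generators}(iii). As before, I would write $N$ for the image of the even notch in $\Aut(\vK)$ and $H$ for the image of the half twist in $\Aut(\vG)$ under the natural injective homomorphisms, and the goal is to verify two things: first, that $N \in \SAut(\vK)$, i.e.\ $N\iota = \iota N$ as automorphisms of $\vK$; and second, that $\Pi(N) = H$ under the Birman--Hilden isomorphism of Lemma \ref{groupoid_birman_hilden}, which by the commuting square of Lemma \ref{commutative_groupoid_diagram} identifies $N$ as the lift of $H$. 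The equality $\LMod(\Sigma_0^1,\vB) = \Mod(\Sigma_0^1,\vB)$ from Theorem \ref{ClassyLMod} guarantees that $H$ actually lies in $\LAut_{\vH}(\vG)$, so this makes sense.

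The first step is to read off from the polygonal picture the effect of $N$ on the groupoid generators. The even notch rotates the edges of $P_L$ and $P_R$ each by $2\pi/(2g+2)$ while fixing the two boundary components, so I expect a formula of the shape $N(a_i) = b_i\, a_{\sigma(i)}^{-1}$ and $N(b_i) = b_i$ for an appropriate index shift $\sigma(i)$ determined by how the cut-rotate-reattach construction of $\iota$ interacts with the notch rotation; the $\pi$-rotation in step (3) of the definition of $\iota$ is what forces $\iota$ to act as a clockwise local rotation and hence determines how the two halves $P_L, P_R$ get interchanged. Pinning down $\sigma$ precisely from Figure \ref{Generators}(iii) and Figure \ref{Notch} is the one place the even case genuinely differs from the odd case and is where I would be most careful — this is the main (though still routine) obstacle. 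Once the formula is in hand, the computation $N\iota(a_i) = N(a_{i+1}) = b_{i+1} a_{\sigma(i+1)}^{-1} = \iota(b_i a_{\sigma(i)}^{-1}) = \iota N(a_i)$ and the analogous trivial check on $b_i$ establish $N\iota = \iota N$, using that $\iota_*(a_i) = a_{i+1}$, $\iota_*(b_i) = b_{i+1}$ and that $\{a_i, b_i\}$ generates $\vK$.

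For the second step, applying $p_*$ collapses all the indices: $p_*(a_i) = a$, $p_*(b_i) = b$, so $\Pi(N)(a) = p_* N(a_i) = p_*(b_i a_{\sigma(i)}^{-1}) = b a^{-1} = H(a)$ and $\Pi(N)(b) = p_* N(b_i) = b = H(b)$. Since $\{a,b\}$ generates $\vG$, this gives $\Pi(N) = H$ in $\LAut_{\vH}(\vG)$, and the commuting diagram of Lemma \ref{commutative_groupoid_diagram} then yields that $N \in \SMod(\Sigma_g^2)$ projects to the half twist $H$, which is exactly the claim. (I note in passing that the statement as printed says ``lifts to the mapping class $N \in \Mod(\Sigma_g^1)$''; this should read $\Mod(\Sigma_g^2)$, and I would correct the typo.) The whole argument is short precisely because all the conceptual work was done in Lemma \ref{groupoid_birman_hilden} and Lemma \ref{commutative_groupoid_diagram}; the even case only requires re-checking the combinatorics of one more picture.
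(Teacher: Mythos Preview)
Your proposal is correct and follows essentially the same approach as the paper, which explicitly states that the proof is identical to the odd case except that now $N(a_i) = b_i a_{i+1}^{-1}$ (so your unspecified shift is $\sigma(i)=i+1$) while $N(b_i)=b_i$ as before. Your observation that the printed $\Mod(\Sigma_g^1)$ should read $\Mod(\Sigma_g^2)$ is also correct.
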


The proof of Lemma \ref{EvenBurau} is identical to the odd case, except that while $N(b_i) = b_i$ as before, we now have $N(a_i) = b_i a_{i+1}^{-1}$.

\subsection{Chain twists}\label{Section_Chain_Twists}

We will now describe the two maps defined in the previous section as products of Dehn twists. We will often abuse notation by referring to an isotopy class of curves by the name of a single representative curve.

\subsection*{Chains}
Recall that a sequence of curves $\{c_1, c_2, \dots, c_k\}$ is called a $k$-chain if $i(c_i, c_j) = 1$ if $j = i \pm 1$ and $i(c_i, c_j) = 0$ otherwise. If $k = 2g$ for some $g$ then the closed neighbourhood of $\cup c_i$ is a subsurface homeomorphic to $\Sigma_g^1$ with boundary component isotopic to the curve $d$. Furthermore, if $k = 2g + 1$ then the closed neighbourhood of $\cup c_i$ is a subsurface homeomorphic to $\Sigma_g^2$ with boundary components $d_1$ and $d_2$ (see Figure \ref{BoundChain}).
\begin{figure}[t]
\begin{center}
\labellist \small \hair 1pt
	\pinlabel {$d$} at 545 250
	\pinlabel {$d_1$} at 1540 245
	\pinlabel {$d_2$} at 1540 5
    \endlabellist
\includegraphics[scale=0.21]{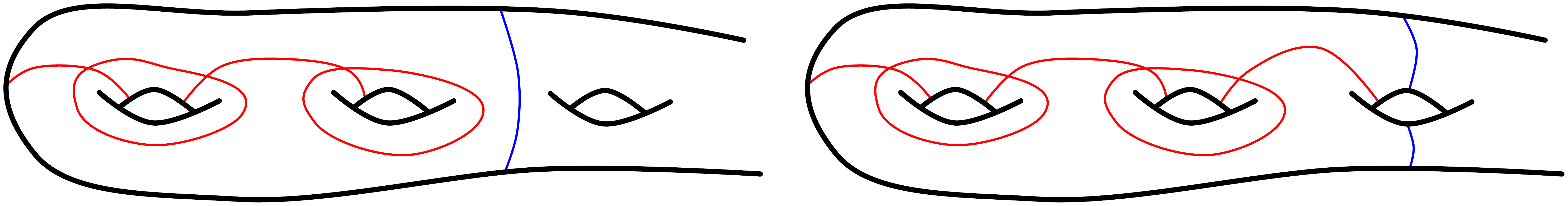}
\end{center}
\caption{A $4$-chain and a $5$-chain. The $4$-chain has a single boundary curve and the $5$-chain has two boundary curves.}
\label{BoundChain}
\end{figure}

By considering the braid group embedding $\beta_2 : B_{k+1} \hookrightarrow \wt \Sigma$ it can be shown that
\[
(T_{c_1} T_{c_2} \dots T_{c_{2g}})^{4g+2} = T_d \eand (T_{c_1} T_{c_2} \dots T_{c_{2g+1}})^{2g+2} = T_{d_1} T_{d_2},
\]
see Farb-Margalit \cite[Section 4.4]{Primer} for more details. Given a $k$-chain $\vC = \{c_1, c_2, \dots, c_k\}$, we call the product $T_\vC := T_{c_1} T_{c_2} \dots T_{c_k}$ a {\it $k$-chain twist} (or a {\it chain twist}). Now, let $p_{2g+1} : \Sigma_g^1 \rightarrow \Sigma_0^1$ be a Burau cover and let $N$ be the lift of the half twist as discussed in Lemma \ref{OddBurau}. If the curve $d$ is isotopic to the boundary of $\Sigma_g^1$ then it is clear that
\[
N^{4g+2} = T_d = (T_\vC)^{4g+2}
\]
for any $2g$-chain $\vC$ in $\Sigma^1_g$. Similarly, suppose $p_{2g+2} : \Sigma_g^2 \rightarrow \Sigma_0^1$ is a Burau cover and $N$ is the lift of the half twist discussed in Lemma \ref{EvenBurau}. If the curves $d_1, d_2$ are isotopic to the boundary components of $\Sigma_g^2$ then we have
\[
N^{2g+2} = T_{d_1} T_{d_2} = (T_\vC)^{2g+2}
\]
for any $(2g+1)$-chain $\vC$ in $\Sigma_g^2$. In Proposition \ref{ChainTwist} we will prove that as well as having the same power as a chain twist, the notch $N$ is in fact equal to a chain twist in both the odd and even cases, proving Theorem \ref{BETAK}. This implies that there exist chains $\vA, \vB$ (of any length) whose corresponding chain twists $T_\vA, T_\vB$ satisfy the braid relation.

In Section \ref{IntersectionData} we will give the explicit combinatorial data required for two $k$-chains to admit chain twists satisfying the braid relation. Furthermore, we show that this data encodes the braid relation on the level of Dehn twists.
\begin{figure}[t]
\begin{center}
\labellist \hair 1pt
    \pinlabel {(i)} at 370 -55
	\pinlabel {(ii)} at 1475 -55
	\pinlabel {$c_1$} at 100 70
	\pinlabel {$c_2$} at 640 70
	\pinlabel {$c_1$} at 1220 565
	\pinlabel {$c_2$} at 1200 70
    \pinlabel {$c_3$} at 1770 545
    \endlabellist
\includegraphics[scale=0.13]{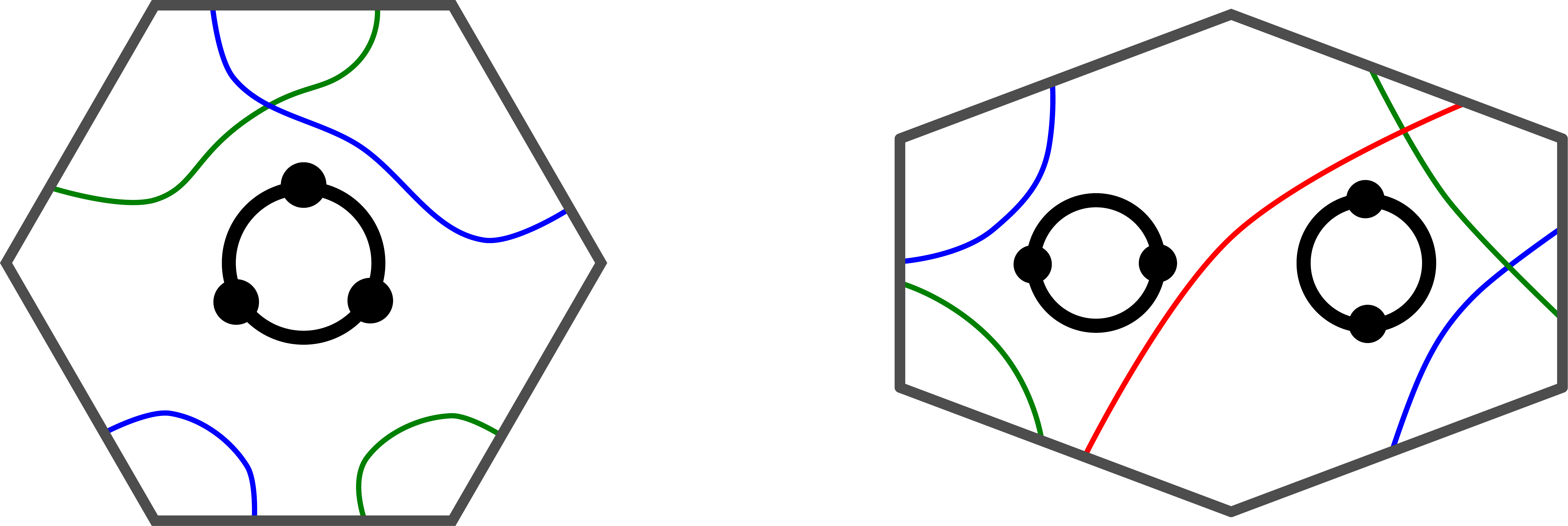}
\end{center}
\caption{The $2$-chain and $3$-chain shown have corresponding chain twists equal to the notch $N$ coming from the $3$-fold and $4$-fold Burau covers respectively.}
\label{2and3chain}
\end{figure}
\begin{prop}\label{ChainTwist}
Given a Burau cover $p_k : \wt \Sigma \to \Sigma_0^1$ the half twist $H \in \Mod(\Sigma_0^1, \vB)$ lifts to a $(k-1)$-chain twist.
\end{prop}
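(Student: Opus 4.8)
The plan is to build on the identification of $H$ with the notch $N$ already established in Lemmas \ref{OddBurau} and \ref{EvenBurau}: it remains only to show that $N$ is a $(k-1)$-chain twist, and I would treat the odd cover $p_{2g+1}\colon\Sigma_g^1\to\Sigma_0^1$ (where $k-1=2g$ is even) and the even cover $p_{2g+2}\colon\Sigma_g^2\to\Sigma_0^1$ (where $k-1=2g+1$ is odd) in parallel. First I would produce, in the $(4g+2)$-gon model $P$ of $\wt\Sigma$, an explicit chain $\vC=\{c_1,\dots,c_{k-1}\}$; Figure \ref{2and3chain} draws the cases $g=1$ (a $2$-chain for the odd cover, a $3$-chain for the even cover), and the general chain is obtained by unrolling the same local pattern around $P$. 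I would then check straight from the picture that $i(c_i,c_j)=1$ when $|i-j|=1$ and $0$ otherwise, that the closed regular neighbourhood of $c_1\cup\dots\cup c_{k-1}$ is all of $\wt\Sigma$ (consistent with a $2g$-chain filling $\Sigma_g^1$ and a $(2g+1)$-chain filling $\Sigma_g^2$), and that the $c_j$ miss the two vertices of $P$, i.e.\ lie in $\wt\Sigma^\circ$.

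Next I would reduce the identity $N=T_\vC$, where $T_\vC:=T_{c_1}\cdots T_{c_{k-1}}$, to a computation in the fundamental groupoid. Both $N$ and $T_\vC$ lie in $\Mod(\wt\Sigma^\circ)$, and the natural homomorphism $\Mod(\wt\Sigma^\circ)\to\Aut(\vK)$ is injective since $\vK=\pi_1(\wt\Sigma^\circ,\{\wt x_i\})$ has a basepoint on every boundary component (the fact \cite[Theorem~3.1.1]{KK} already used in the proof of Lemma \ref{commutative_groupoid_diagram}). So it suffices to check that $(T_\vC)_*$ agrees with $N_*$ on the generators $a_i,b_i$ of $\vK$. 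From Lemmas \ref{OddBurau} and \ref{EvenBurau}, $N_*(b_i)=b_i$ for all $i$, while $N_*(a_i)=b_ia_{i+1+g}^{-1}$ in the odd case and $N_*(a_i)=b_ia_{i+1}^{-1}$ in the even case. Because the chain curves lie in the interior and can be made disjoint from the boundary-parallel loops $b_i$, one gets $(T_\vC)_*(b_i)=b_i$ for free. For the $a_i$ I would choose representatives so that each arc $a_i$ crosses $c_1,\dots,c_{k-1}$ in a single staircase, then compute the successive surgeries $a_i\mapsto(T_{c_1})_*a_i\mapsto(T_{c_1}T_{c_2})_*a_i\mapsto\cdots$; each twist contributes exactly one new syllable and the product should telescope to $b_ia_{i+1+g}^{-1}$ (resp.\ $b_ia_{i+1}^{-1}$), the index shift being forced by how the chain sits with respect to the star $\{a_i\}$ and the deck rotation $\iota$. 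By faithfulness this yields $N=T_\vC$. As an independent sanity check one can compare powers: $(T_\vC)^{4g+2}=T_d=N^{4g+2}$ with $d$ isotopic to $\partial\Sigma_g^1$ in the odd case, and $(T_\vC)^{2g+2}=T_{d_1}T_{d_2}=N^{2g+2}$ in the even case, using Farb--Margalit \cite[Section~4.4]{Primer}; but this comparison alone would not suffice, since roots of central elements need not be unique, which is why the groupoid computation is the real content. Finally, Theorem \ref{BETAK} follows by applying the Proposition to each standard generator: for any $n\ge2$ and any $i$, restricting $\beta_k$ to $\langle\sigma_i\rangle\cong B_2$ and restricting the cover to a subdisk of $\Sigma_0^1$ around the punctures $i,i+1$ exhibits a $k$-sheeted Burau cover sitting inside $\Sigma_g^m$ whose notch is $\beta_k(\sigma_i)$, hence a $(k-1)$-chain twist.

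I expect the main obstacle to be exactly the Dehn-twist bookkeeping of the previous paragraph: organising the chain curves and the groupoid generators so that all crossings are minimal and transverse, tracking signs and orientations through the successive surgeries, and confirming that the telescoping produces precisely the shift $a_{i+1+g}$ (odd) or $a_{i+1}$ (even) matching $N_*$. Once the picture is set up correctly this is a finite, mechanical check and everything else is formal; but setting up that picture so the computation is genuinely clean — and respects the $(2g+1)$- or $(2g+2)$-fold rotational symmetry — is where the care is needed.
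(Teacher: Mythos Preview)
Your plan is correct and follows the same overall strategy as the paper: reduce to Lemmas \ref{OddBurau} and \ref{EvenBurau}, then verify $N=T_{\vC}$ by comparing the induced automorphisms on a suitable fundamental groupoid and invoking faithfulness.  Two differences are worth noting.

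First, the paper works in the groupoid of the \emph{unpunctured} surface $\wt\Sigma$ rather than $\wt\Sigma^\circ$.  With the vertices of $P$ filled in, the boundary-parallel generators $b_i$ become redundant and one only needs to check the action on the arcs $a_i$ (reindexed as $\alpha_{2i}:=a_i$).  This halves the work and avoids your ``$b_i$ for free'' step entirely.

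Second, and more substantively, the paper does not fix the chain in advance and then grind through a staircase of surgeries.  Instead it \emph{defines} the chain using $N$: set $c_0$ to be the curve determined by the groupoid element $\alpha_0\alpha_1 b_0^{-1}$, and then put $c_i:=N^i(c_0)$.  Because the $c_i$ are $N$-translates of one another, one gets immediately and uniformly that
\[
T_{c_i}(\alpha_i)=N(\alpha_i),\qquad T_{c_j}(\alpha_i)=\alpha_i\ \text{for }j>i,\qquad T_{c_j}\,N(\alpha_i)=N(\alpha_i)\ \text{for }j<i,
\]
from which $T_{\vC}(\alpha_i)=N(\alpha_i)$ for $i=1,\ldots,2g$ follows by a one-line telescoping argument; only the single remaining generator $\alpha_0$ requires a picture (Figure~\ref{ChainNotch}).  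This replaces your anticipated ``finite, mechanical check'' of all crossings by three structural facts that hold by construction.  Your brute-force computation would certainly go through, but the trick $c_i=N^i(c_0)$ is exactly the organising principle you were looking for to make the bookkeeping clean and to respect the rotational symmetry you mention.
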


\begin{proof}
Given Lemmas \ref{OddBurau} and \ref{EvenBurau} we need only show that the mapping class $N \in \Mod(\Sigma_g^m)$ is equal to a $(k-1)$-chain twist $T_\vC$, for some $(k-1)$-chain $\vC$. To do this we will show that the images of $T_\vC$ and $N$ are equal in the group $\Aut(\vK)$ where $\vK$ is a fundamental groupoid of $\Sigma_g^m$ with basepoints on all boundary components.

To that end, let $k = 2g+1$ and let $\vK$ be the fundamental groupoid of $\Sigma_g^1$ generated by the set depicted in Figure \ref{Generators}(ii). Note that this set was also used to define a fundamental groupoid of ${\Sigma_g^1}^\circ$. In this setting however, the vertices of the polygon are not punctures. Furthermore, in order to facilitate the proof we change the indexing so that $\alpha_{2i} := a_i$. We define the curve $c_0$ uniquely by the groupoid element $\alpha_0 \alpha_1 b_0^{-1}$. We then define $c_i$ to be $N^i(c_0)$ for all $i \in \ZZ / (2g+1)\ZZ$, see Figure \ref{2and3chain}(i). Now, we define the $2g$-chain $\vC : = \{ c_1, \dots, c_{2g}\}$. In fact, we may choose $\vC$ to be any $2g$-chain consisting of the curves $c_i$. Now, by construction it can be seen that
\begin{align*}
T_{c_i}(\alpha_i) &= N (\alpha_i), \\ 
T_{c_j}(\alpha_i) &= \alpha_i \mbox{ for }j > i, \\ 
T_{c_j}N(\alpha_i) &= N(\alpha_i) \mbox{ for }j < i.
\end{align*}
It follows then that for any $i \in \{1,\dots,2g\}$ we have
\begin{align*}
T_\vC(\alpha_i) &= T_{c_1}T_{c_2}\dots T_{c_{2g}}(\alpha_i) \\ 
&= T_{c_1}T_{c_2}\dots T_{c_i}(\alpha_i) \\ 
&= T_{c_1}T_{c_2}\dots T_{c_{i-1}}N(\alpha_i) \\ 
&= N(\alpha_i).
\end{align*}

It remains to show that $T_\vC(\alpha_0) = N(\alpha_0)$. The curve $c_{2g}$ intersects the representative of $\alpha_0$ once and by definition $i(c_{i},c_{i+1})=1$. It therefore follows that the product $T_\vC$ adds a copy of each of the $c_i$ to $\alpha_0$. It is shown in Figure \ref{ChainNotch} that this is in fact equal to $N(\alpha_0)$ in the case where $g=1$, and indeed, this is true for any $g > 0$.
\begin{figure}[t]
\begin{center}
\labellist \hair 1pt
	\pinlabel {$c_1$} at 18 325
	\pinlabel {$c_2$} at 195 320
    \pinlabel {$\alpha_0$} at 0 450
    \pinlabel {$c_1$} at 360 325
    \pinlabel {$T_{c_2}(\alpha_0)$} at 305 450
    \pinlabel {$T_{c_1}T_{c_2}(\alpha_0)$} at 800 270
    \endlabellist
\includegraphics[scale=0.29]{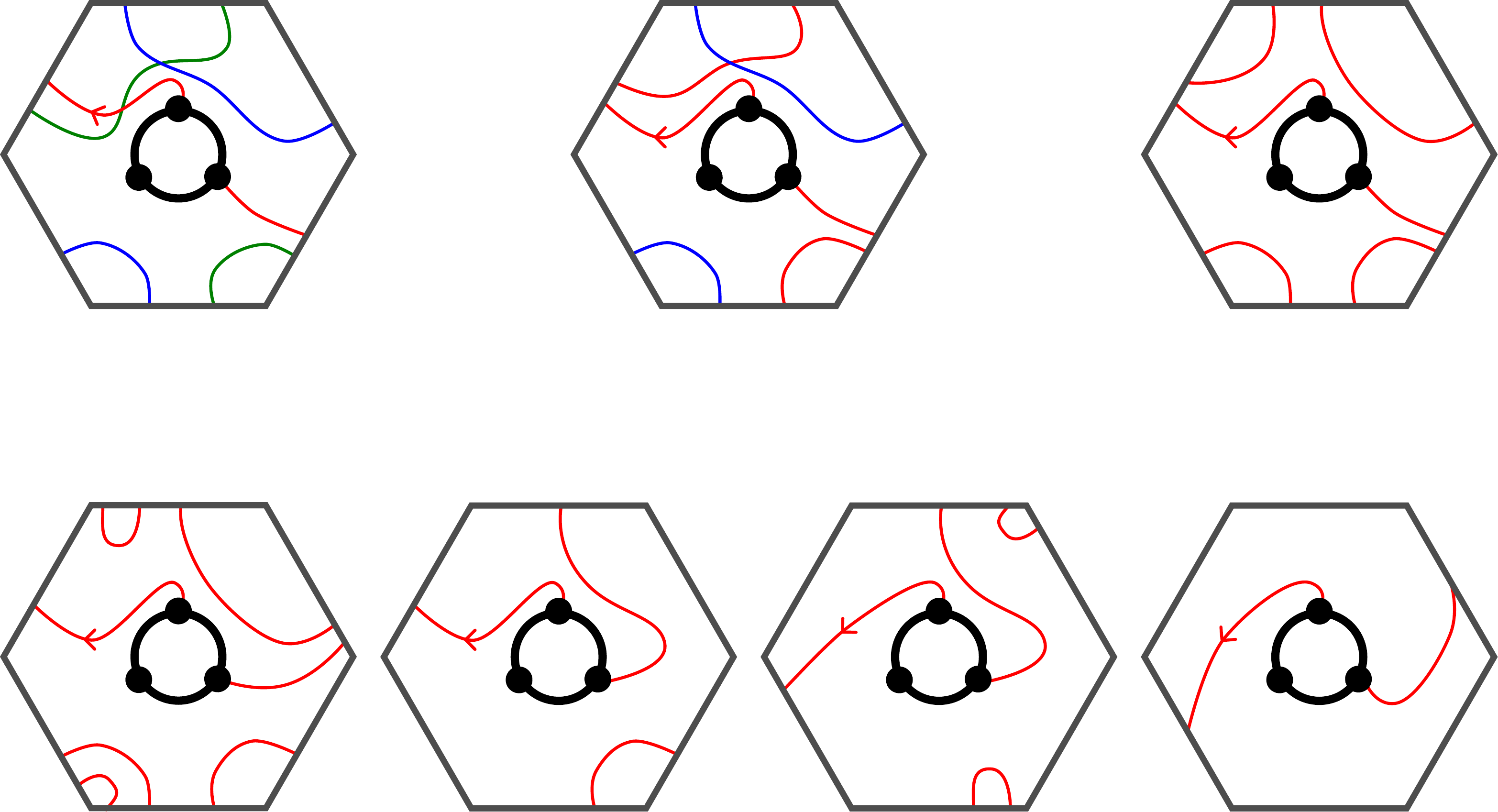}
\end{center}
\caption{All arcs on the bottom row of hexagons are isotopic to the arc shown in the hexagon on the top right. The arcs $T_\vC(\alpha_0)$ and $N(\alpha_0)$ are isotopic and so are equal as elements of the fundamental groupoid.}
\label{ChainNotch}
\end{figure}
It follows that $T_\vC = N$ as groupoid automorphisms, and hence, they are equal as elements of $\Mod(\Sigma_g^1)$.
The case where $k = 2g+2$ is similar to that of $2g+1$. We can use the groupoid generators shown in Figure \ref{Generators}(iii) and the details are left to the reader.
\end{proof}

Note that Proposition \ref{ChainTwist} proves Theorem \ref{BETAK} in that it completely determines the image of each braid group generator $\sigma_i$ by the homomorphism $\beta_k$.

\subsection{Intersection data}\label{IntersectionData}

In Section \ref{Section_Chain_Twists} we saw that half twists lift to $(k-1)$-chain twists with respect to any $k$-sheeted Burau cover where $k\ge3$. We will now explicitly describe a sufficient combinatorial condition for two chains that implies their chain twists satisfy the braid relation.  We will assume that $k \geq 3$ for the remainder of this section.

\subsection*{Bracelets}

Let $\vC = \{c_1,\ldots,c_{k-1}\}$ be a $(k-1)$-chain and let $c_0 = T_{\vC}(c_{k-1})$.  We call the set $\{c_i:i \in \ZZ/k\ZZ\}$ a {\it $k$-bracelet} (or a {\it bracelet}). Such a bracelet is called the {\it bracelet completion} of the chain $\vC$.

\begin{lem}\label{bracelet_properties}
Let $\vC = \{c_1,\ldots,c_{k-1}\}$ be a chain and $\{c_i:i \in \ZZ/k\ZZ\}$ the bracelet completion of $\vC$.  Then
\begin{enumerate}
\item[{\normalfont (i)}] $i(c_i,c_j) = \begin{cases}
1 &\text{if } i = j-1,j+1 \\
0 &\text{otherwise,}
\end{cases}$
\item[{\normalfont (ii)}] $T_{c_i}T_{c_{i+1}} \cdots T_{c_{i-2}} = T_{c_{i+1}}T_{c_{i+2}} \cdots T_{c_{i-1}}$ for all $i \in \ZZ/k\ZZ$.
\end{enumerate}
\end{lem}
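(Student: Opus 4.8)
\emph{Sketch of the intended argument.}
The plan is to deduce both parts from the single claim that $\rho := T_\vC = T_{c_1}T_{c_2}\cdots T_{c_{k-1}}$ cyclically permutes the bracelet: reading indices mod $k$ and writing $c_0 := T_\vC(c_{k-1})$, we show $\rho(c_i)=c_{i+1}$ for all $i\in\ZZ/k\ZZ$; call this claim $(\star)$.

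I would first verify $(\star)$ for $i=1,\dots,k-2$ using only the chain relations among $c_1,\dots,c_{k-1}$. Since $i(c_j,c_{j+1})=1$, the braid relation $T_aT_bT_a=T_bT_aT_b$ yields the standard identity $(T_aT_b)(a)=b$, so $(T_{c_i}T_{c_{i+1}})(c_i)=c_{i+1}$; as $T_{c_j}$ fixes $c_i$ whenever $\abs{i-j}\ge 2$, the twists to the right of $T_{c_{i+1}}$ in $\rho$ fix $c_i$ and those to the left of $T_{c_i}$ fix $c_{i+1}$, whence $\rho(c_i)=T_{c_1}\cdots T_{c_i}T_{c_{i+1}}(c_i)=c_{i+1}$. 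The case $i=k-1$ of $(\star)$ is the definition of $c_0$, so it remains only to prove $\rho(c_0)=c_1$.

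This last step is the crux, and the main obstacle. Let $S$ be a closed regular neighbourhood of $c_1\cup\dots\cup c_{k-1}$; since $\{c_1,\dots,c_{k-1}\}$ is a chain of length $k-1$, $S\cong\Sigma_{(k-1)/2}^1$ if $k$ is odd and $S\cong\Sigma_{k/2-1}^2$ if $k$ is even, and in both cases $S$ is the total space of the $2$-sheeted Burau cover $p\colon S\to{\bf D}_k$ branched over $k$ points. As $\rho$ and the $c_i$ are supported in $S$, it suffices to prove $\rho(c_0)=c_1$ inside $S$, and the change-of-coordinates principle for chains lets us assume $\vC$ is the standard chain $c_i=p^{-1}(e_i)$, where $e_1,\dots,e_{k-1}$ join consecutive branch points placed symmetrically around a circle. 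By the Birman--Hilden correspondence for $p$, the half-twist $H_i$ about $e_i$ lifts to $T_{c_i}$, so $\rho$ is a lift of $\delta := H_1H_2\cdots H_{k-1}\in\Mod({\bf D}_k)\cong B_k$. The braid $\delta$ is isotopic to a rotation of the punctured disk, hence cyclically permutes the $k$-arc necklace $e_0:=\delta(e_{k-1}),\,e_1,\dots,e_{k-1}$; in particular $\delta(e_0)=e_1$. Since each $e_i$ has $c_i=p^{-1}(e_i)$ as its unique simple-closed-curve lift and $\rho$ lifts $\delta$, we get $\rho(c_i)=p^{-1}(\delta(e_i))=c_{i+1}$ for all $i$; in particular $c_0=\rho(c_{k-1})=p^{-1}(e_0)$ coincides with the bracelet completion and $\rho(c_0)=c_1$. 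The hard part is precisely this identification of $\rho$ with a lift of the rotation braid, ensuring that the $\rho$-orbit of $c_{k-1}$ closes up. (For $k$ even one can shortcut it using the chain relation $\rho^k=T_{d_1}T_{d_2}$, which is central and so fixes $c_1$, giving $\rho(c_0)=\rho\bigl(\rho^{k-1}(c_1)\bigr)=\rho^k(c_1)=c_1$; for $k$ odd only $\rho^{2k}$ is visibly central, so the branched-cover route is the clean uniform one.)

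Granting $(\star)$, part (i) follows because every power of $\rho$ is a homeomorphism, so $i(c_i,c_j)$ depends only on $j-i\bmod k$: it is $0$ for $j=i$, equals $i(c_1,c_2)=1$ when $j-i\equiv\pm 1$, and equals $i(c_1,c_{1+(j-i)})=0$ when $2\le j-i\le k-2$. For part (ii), set $\rho_i:=T_{c_i}T_{c_{i+1}}\cdots T_{c_{i-2}}$, the product of the $k-1$ cyclically consecutive twists starting at $c_i$; conjugating by $\rho$ and using $(\star)$ gives $\rho\,\rho_i\,\rho^{-1}=T_{c_{i+1}}T_{c_{i+2}}\cdots T_{c_{i-1}}=\rho_{i+1}$, and since $\rho_1=T_{c_1}\cdots T_{c_{k-1}}=\rho$ we obtain $\rho_2=\rho\rho_1\rho^{-1}=\rho=\rho_1$, hence inductively $\rho_{i+1}=\rho\rho_i\rho^{-1}=\rho_1$ for all $i$ --- that is, all the $\rho_i$ agree, which is exactly (ii).
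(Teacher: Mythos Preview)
Your argument is correct, and it takes a genuinely different route from the paper. The paper proceeds purely algebraically: for (i) it writes $T_{c_0}=T_{c_1}\cdots T_{c_{k-1}}T_{c_{k-2}}^{-1}\cdots T_{c_1}^{-1}$ and appeals to the word problem in $B_k$ to check the required braid and commutation relations with the other $T_{c_i}$; for (ii) it observes $T_{c_0}\cdots T_{c_{k-2}}=T_{c_1}\cdots T_{c_{k-1}}$ directly from the definition of $c_0$ and then pushes this identity around the cycle by an explicit braid-relation induction. You instead prove the single stronger statement $(\star)$ that $\rho=T_\vC$ cyclically permutes the bracelet, from which (i) and (ii) drop out immediately via invariance of intersection number and the conjugation identity $\rho\rho_i\rho^{-1}=\rho_{i+1}$. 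Your approach is more conceptual and yields $(\star)$ as a useful byproduct; the paper's is more self-contained and avoids invoking any covering-space machinery.

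One remark on your ``hard step'' $\rho(c_0)=c_1$: the Birman--Hilden detour is valid but unnecessary, and the asymmetry you note between $k$ even and $k$ odd is illusory. You only need $\rho^k(c_1)=c_1$, and for this it suffices that $\rho^k$ commute with $T_{c_1}$ --- not that it be central in all of $\Mod(S)$. Since $\{T_{c_1},\dots,T_{c_{k-1}}\}$ satisfy the braid relations, there is a homomorphism $B_k\to\Mod(S)$ with $\sigma_i\mapsto T_{c_i}$ and $\delta=\sigma_1\cdots\sigma_{k-1}\mapsto\rho$; the standard fact that $\delta^k$ generates the centre of $B_k$ then gives $[\rho^k,T_{c_1}]=1$ for every $k$, hence $\rho(c_0)=\rho^k(c_1)=c_1$. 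This is in the same spirit as the paper's appeal to braid-group algebra, and it closes the loop uniformly without the hyperelliptic cover.
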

\begin{proof}
Note that $T_{c_0} = T_{c_1} \cdots T_{c_{k-1}}T_{c_{k-2}}^{-1} \cdots T_{c_1}^{-1}$.  Any solution to the word problem for the braid group (Dehornoy's handle reduction \cite{Dehornoy} for example) can be used to show $[T_{c_0},T_{c_i}] = 1$ if $i \neq 1, k-1$ and $T_{c_0}T_{c_i}T_{c_0} = T_{c_i}T_{c_0}T_{c_i}$ if $i = 1,k-1$.  This proves property (i).

For (ii), note $T_{c_0}\cdots T_{c_{k-2}} = T_{c_1}\cdots T_{c_{k-1}}$ from the definition of $c_0$.  Suppose now $T_{c_j} \cdots T_{c_{j-2}} = T_{c_{j+1}} \cdots T_{c_{j-1}}$ for some $j\in \ZZ/k\ZZ$.  Then
\begin{align*}
T_{c_{j+2}} \cdots T_{c_{j}} &= T_{c_{j+1}}^{-1}T_{c_{j+1}}T_{c_{j+2}}\cdots T_{c_{j-1}}T_{c_j} \\
&= T_{c_{j+1}}^{-1}T_{c_j}T_{c_{j+1}}\cdots T_{c_{j-2}}T_{c_{j}} \\
&= T_{c_j}T_{c_{j+1}}T_{c_j}^{-1}T_{c_{j+2}} \cdots T_{c_{j-2}}T_{c_j} \\
&= T_{c_j} \cdots T_{c_{j-2}}
\end{align*}
completing the proof.
\end{proof}

Note that property (ii) in Lemma \ref{bracelet_properties} implies that a $k$-bracelet is the completion of any of the $(k-1)$-chains obtained by deleting a curve.  Abusing notation, suppose $\vC$ is a $k$-bracelet.  In light of this fact, define the {\it bracelet twist} $T_{\vC}$ as the chain twist about any of the $(k-1)$-chains obtained by deleting a curve from $\vC$.

\subsection*{Mesh intersection}
Let $\vA = \{a_i:i \in \ZZ/k\ZZ\}$ and $\vB = \{b_j:j \in \ZZ/k\ZZ\}$ be two $k$-bracelets.  We say $\vA$ and $\vB$ have {\it mesh intersection} if there exists $t \in \ZZ/k\ZZ$ such that 
\[
i(a_i,b_{j+t}) = \begin{cases}
1 &\text{if } i = j, j+1 \\
0 &\text{otherwise.}
\end{cases}
\]
In practice, we may simply relabel the curves in $\vB$ and assume $t = 0$.

Fix $k \geq 3$ and let $\beta_k:B_3 \to \Mod(\Sigma_g^m)$ be the embedding of the braid group arising from the $k$-sheeted Burau cover.  Proposition \ref{ChainTwist} shows that each standard generator is sent to a $(k-1)$-chain twist.  The proof proceeds by first constructing a set of $k$ curves, and then arbitrarily discarding one.  The set of $k$ curves constructed is in fact the completion of the $(k-1)$-chain.  Furthermore, it can be checked that $\beta_k$ sends the two standard generators to bracelet twists about bracelets with mesh intersection. In fact, in the discussion following the statement of Proposition \ref{ChainBraid} we will see that if $\vA$ and $\vB$ are two bracelets with mesh intersection such that all $2k$ curves are distinct, then there exists a Burau cover that lifts two half twists satisfying a braid relation to $T_\vA$ and $T_\vB$.  It follows from Theorem \ref{ClassyLMod} that $T_\vA$ and $T_\vB$ satisfy the braid relation.  By leveraging the algebraic properties of Dehn twists, we may arrive at the same conclusion without mention of such a covering space.

\begin{figure}[t]
\begin{center}
\labellist\hair 1pt
	\pinlabel {$a_0$} at 645 280
	\pinlabel {$c$} at 290 305
	\pinlabel {$a_2$} at 400 165
	\pinlabel {$b_0$} at 185 305
	\pinlabel {$b_1$} at 330 90
    \endlabellist
\includegraphics[scale=0.25]{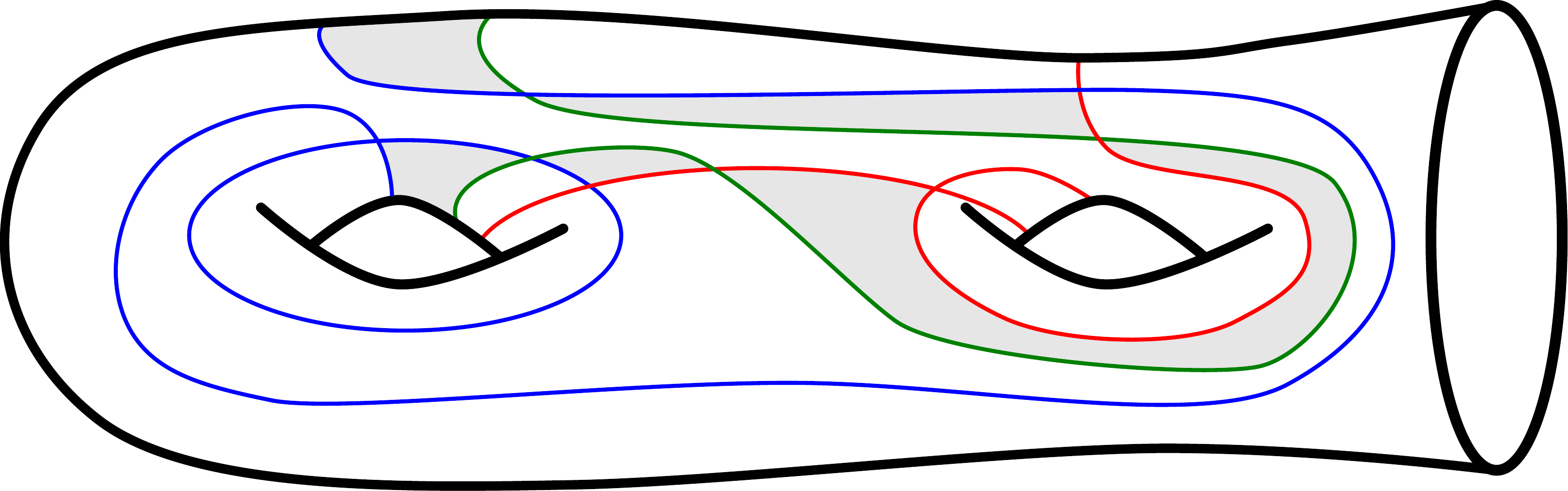}
\end{center}
\caption{Two $3$-bracelets $\vA=\{a_0, c, a_2\}$ and $\vB=\{b_0,b_1,c\}$ with mesh intersection.}
\label{Degenerate}
\end{figure}

\begin{thm}\label{MeshBraid}
If two $k$-bracelets $\vA$ and $\vB$ have mesh intersection then $T_\vA T_\vB T_\vA = T_\vB T_\vA T_\vB$.
\end{thm}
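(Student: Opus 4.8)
The plan is to prove the braid relation purely algebraically, deducing it from the standard relations among Dehn twists, so that one argument covers every surface $\Sigma$ and every realisation of the pattern. (This is the point of avoiding covering spaces: the mesh data alone does not determine the pair $(\Sigma,\vA\cup\vB)$ up to homeomorphism, and for small $k$ it forces coincidences among the curves, as in Figure~\ref{Degenerate}.) The only ingredients are: the braid relation $T_xT_yT_x=T_yT_xT_y$ whenever $i(x,y)=1$; the commutation relation $T_xT_y=T_yT_x$ whenever $i(x,y)=0$; and the bracelet relations of Lemma~\ref{bracelet_properties}(ii), which allow us to cyclically re-index the $(k-1)$-chain defining a bracelet twist. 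After relabelling $\vB$ we assume the offset in the mesh condition is $0$, so $i(a_i,b_j)=1$ exactly when $j\in\{i-1,i\}$.

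The first step is a rotation lemma: for any $k$-bracelet $\vC=\{c_i\}$ one has $T_\vC T_{c_i}T_\vC^{-1}=T_{c_{i+1}}$ for every $i$, i.e.\ $T_\vC(c_i)=c_{i+1}$. The case $i=k-1$ is just the defining equation $c_0=T_\vC(c_{k-1})$ read as a conjugation relation, and re-indexing $T_\vC$ via Lemma~\ref{bracelet_properties}(ii) propagates it around the bracelet. In particular each of $T_\vA,T_\vB$ acts on its own bracelet by a cyclic shift.

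The main reduction is to recast the braid relation as an equality of curves. Since $T_\vA T_\vB T_\vA=T_\vB T_\vA T_\vB$ is equivalent to $T_\vA T_\vB T_\vA^{-1}=T_\vB^{-1}T_\vA T_\vB$, and since $\phi T_c\phi^{-1}=T_{\phi(c)}$, we get $T_\vA T_\vB T_\vA^{-1}=T_{T_\vA(b_1)}\cdots T_{T_\vA(b_{k-1})}$ and $T_\vB^{-1}T_\vA T_\vB=T_{T_\vB^{-1}(a_1)}\cdots T_{T_\vB^{-1}(a_{k-1})}$. Hence it suffices to show that the chain $\{T_\vA(b_i)\}$ coincides with $\{T_\vB^{-1}(a_i)\}$ up to the cyclic re-indexing permitted by Lemma~\ref{bracelet_properties}(ii); that is, to verify simple-closed-curve identities of the form $T_\vA(b_j)=T_\vB^{-1}(a_{\tau(j)})$ for a suitable bijection $\tau$. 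This is where the mesh hypothesis does the work: because $b_j$ is disjoint from every $a_i$ with $i\notin\{j,j+1\}$, re-indexing $T_\vA$ appropriately shows $T_\vA(b_j)$ depends only on $a_j,a_{j+1},b_j$, and symmetrically $T_\vB^{-1}(a_i)$ depends only on $b_{i-1},b_i,a_i$. So each required identity lives inside a four-curve sub-configuration $\{a_j,a_{j+1},b_j,b_{j+1}\}$ whose intersection pattern is independent of the rest of the diagram, and there it is a consequence of the braid relations among those four twists (essentially the $B_3$ braid relation, which is exactly what the Burau picture of Section~\ref{Section_Chain_Twists} predicts).

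The hard part is precisely this last verification, together with its bookkeeping: fixing the correct re-indexing $\tau$, writing down the correct local normal form for $T_\vA(b_j)$ and for $T_\vB^{-1}(a_{\tau(j)})$ — the intersection-number calculus for these twisted curves is genuinely sign-sensitive, so one must be careful which of the two ``new'' curves each twist produces — and then checking that the resulting four-curve identity follows formally from the braid and commutation relations. Once all the local identities are established, the two displayed conjugation formulas assemble them into $T_\vA T_\vB T_\vA^{-1}=T_\vB^{-1}T_\vA T_\vB$, and hence into the desired braid relation.
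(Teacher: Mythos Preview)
Your approach is correct and genuinely different from the paper's. The paper never passes through curve identities; instead it establishes the single ``shuffle'' relation
\[
T_\vA T_\vB T_{a_i} \;=\; T_{b_i}\, T_\vA T_\vB
\]
for each $i$ by a direct seven-line manipulation using only the mesh intersection data and one application of Lemma~\ref{bracelet_properties}(ii), and then iterates to obtain the braid relation. Your route---conjugate, reduce to showing that the image bracelets $T_\vA(\vB)$ and $T_\vB^{-1}(\vA)$ coincide, and invoke Lemma~\ref{bracelet_properties}(ii)---is more conceptual and arguably explains \emph{why} the relation holds.

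That said, you have left the ``hard part'' undone, and your remarks about four-curve configurations and sign-sensitivity suggest you are overcomplicating it. With the right bracelet re-indexings the verification is almost trivial: writing $T_\vA = T_{a_{j+1}}T_{a_{j+2}}\cdots T_{a_{j-1}}$ (omitting $a_j$) gives $T_\vA(b_j)=T_{a_{j+1}}(b_j)$, since every other factor commutes past $b_j$; writing $T_\vB = T_{b_{j+2}}\cdots T_{b_j}$ (omitting $b_{j+1}$) gives $T_\vB^{-1}(a_{j+1})=T_{b_j}^{-1}(a_{j+1})$. The required identity is then just $T_{a_{j+1}}(b_j)=T_{b_j}^{-1}(a_{j+1})$, which is the braid relation for the single pair $a_{j+1},b_j$---a \emph{two}-curve identity, with no orientation bookkeeping needed. (Your rotation lemma is correct but plays no role in this argument.) The paper's shuffle identity $T_\vA T_\vB(a_i)=b_i$ is the companion to your $T_\vB T_\vA(b_j)=a_{j+1}$; each encodes the same phenomenon, but the paper's choice leads to a computation that stays entirely inside the twist algebra without ever naming the intermediate curve.
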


\begin{proof}
By relabelling the curves in $\vB$, we may assume $t = 0$ in the definition of mesh intersection.  We first show that $T_\vA T_\vB T_{a_i} = T_{b_i} T_\vA T_\vB$ as follows;
\begin{align*}
T_{\vA}T_{\vB}T_{a_i} &= T_{a_{i+1}} \cdots T_{a_{i-1}}T_{b_{i}} \cdots T_{b_{i-2}}T_{a_i} \\
&= T_{a_{i+1}}T_{b_i}T_{a_{i+2}} \cdots T_{a_{i-1}}T_{b_{i+1}} \cdots T_{b_{i-2}}T_{a_i} \\
&= T_{a_{i+1}}T_{b_i}T_{a_{i+2}} \cdots T_{a_{i-1}}T_{a_i}T_{b_{i+1}} \cdots T_{b_{i-2}} \\
&= T_{a_{i+1}}T_{b_i}T_{a_{i+1}} \cdots T_{a_{i-1}}T_{b_{i+1}} \cdots T_{b_{i-2}} \\
&= T_{b_i}T_{a_{i+1}}T_{b_i}T_{a_{i+2}} \cdots T_{a_{i-1}}T_{b_{i+1}} \cdots T_{b_{i-2}} \\
&= T_{b_i}T_{a_{i+1}}T_{a_{i+2}} \cdots T_{a_{i-1}}T_{b_i}T_{b_{i+1}} \cdots T_{b_{i-2}} \\
&= T_{b_i}T_{\vA}T_{\vB}.
\end{align*}
The second and third equalities come from the intersection data of curves $b_i$ and $a_{i-1}$ respectively. The fourth equality comes from property (ii) of Lemma \ref{bracelet_properties}. The fifth and sixth equalities come from property (i) of Lemma \ref{bracelet_properties} applied to $\vB$.

This allows us to achieve the braid relation as follows:
\begin{align*}
T_\vA T_\vB T_\vA &= T_\vA T_\vB T_{a_0}T_{a_1} \cdots T_{a_{k-2}}
\\ &= T_{b_0} T_\vA T_\vB T_{a_1} \cdots T_{a_{k-2}}
\\ &= \cdots
\\ &= T_{b_0} T_{b_1} \cdots T_{b_{k-2}} T_\vA T_\vB
\\ &= T_\vB T_\vA T_\vB. \hfill \qedhere
\end{align*}
\end{proof}

When $k \geq 4$, it can be shown that if two $k$-bracelets have mesh intersection, then all $2k$ curves in question are distinct.  However, this is not the case when $k = 3$.  Figure \ref{Degenerate} shows two $3$-bracelets $\{a_i:i \in \ZZ/3\ZZ\}$ and $\{b_i:i \in \ZZ/3\ZZ\}$ with mesh intersection such that $a_1 = b_2$.

\begin{figure}[t]
\begin{center}
\labellist\hair 1pt
	\pinlabel {$c_1$} at 80 115
	\pinlabel {$c_2$} at 115 65
	\pinlabel {$c_3$} at 155 115
	\pinlabel {$T_{c_1}(c_3)$} at 400 130
	\pinlabel {$c_2$} at 450 65
    \endlabellist
\includegraphics[scale=.4]{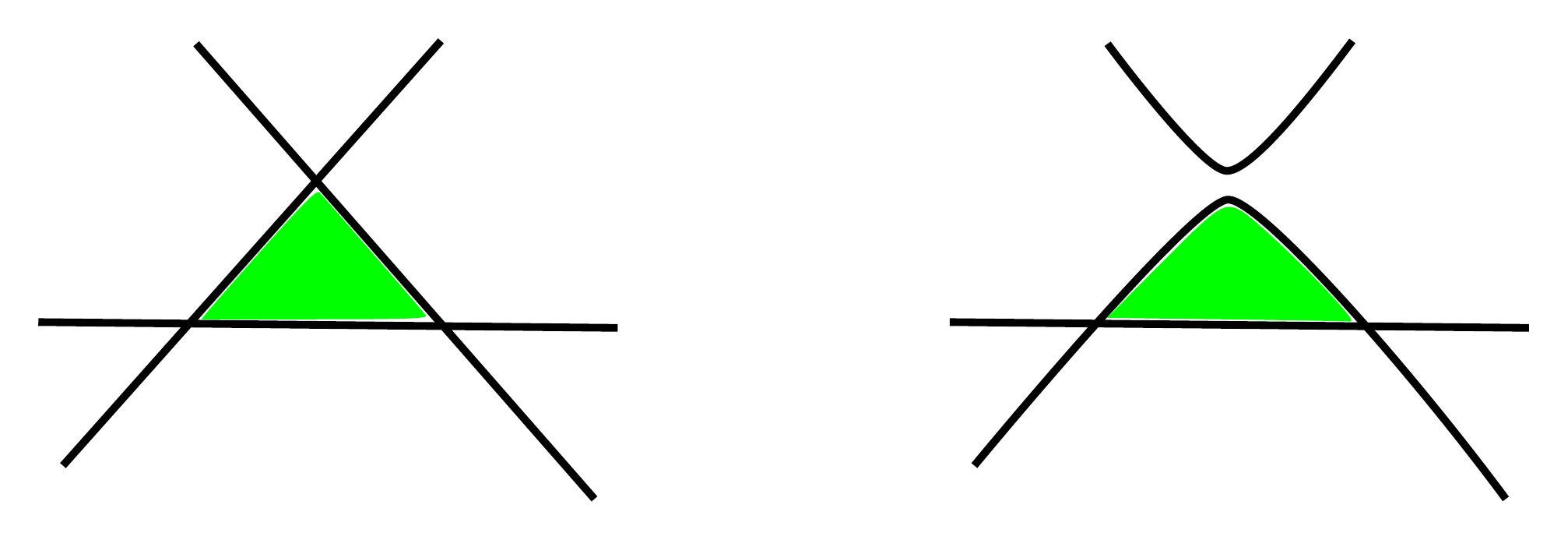}
\end{center}
\caption{The triple $(c_1,c_2,c_3)$ bounds a positively oriented triangle.  The right image shows the bigon between $T_{c_1}(c_3)$ and $c_2$, implying $i(T_{c_1}(c_3),c_2) = 0$.}
\label{triangles}
\end{figure}

\subsection*{Intersection data for chains}
We now shift our attention to finding a sufficient combinatorial condition for two $(k-1)$-chains $\vA$ and $\vB$ to have the property that their bracelet completions have mesh intersection.  We will then be able to conclude, by Theorem \ref{MeshBraid}, that the two chain twists $T_{\vA}$ and $T_{\vB}$ satisfy a braid relation.

Suppose $\alpha_1,\alpha_2,\alpha_3$ are curves on a surface in minimal position such that $i(\alpha_i,\alpha_j) = 1$ if $i \neq j$.  The graph given by the three curves defines two triangles and a hexagon on the surface.  Suppose one of the triangles bounds a disk $\bf D$.  We say the triple $(\alpha_1,\alpha_2,\alpha_3)$ {\it bounds a positively oriented triangle} if you can traverse $\partial \bf D$ in a anti-clockwise direction from the intersection point $x \in \alpha_1 \cap \alpha_3$ and travel along a segment of $\alpha_1$, then a segment of $\alpha_2$, then a segment of $\alpha_3$ in that order and return to $x$.  See Figure \ref{triangles} for a local picture of three curves bounding a positively oriented triangle.

We say a triple $(c_1,c_2,c_3)$ of isotopy classes of curves bounds a positively oriented triangle if there exist representatives $\gamma_i$ of $c_i$ such that $(\gamma_1,\gamma_2,\gamma_3)$ bounds a positively oriented triangle.

Note that if $(c_1,c_2,c_3)$ bounds a positively oriented triangle and $\sigma \in S_3$ is a permutation, then $(c_{\sigma(1)},c_{\sigma(2)},c_{\sigma(3)})$ bounds a positively oriented triangle if and only if $\sigma$ is an even permutation.

The importance of the definition of a triple bounding a positively oriented triangle is that if $(c_1,c_2,c_3)$ bounds a positively oriented triangle, then $i(T_{c_1}(c_3),c_2) = 0$.  This can be seen in Figure \ref{triangles}.

\begin{prop}\label{ChainBraid}
Suppose $\vA = \{a_1,\ldots,a_{k-1}\}$ and $\vB = \{b_1,\ldots,b_{k-1}\}$ are two $(k-1)$-chains with the property that
\begin{enumerate}
\item[\normalfont (i)] $i(a_i,b_j) = \begin{cases}
1 &\text{if } i = j,j+1, \\
0 &\text{otherwise,}
\end{cases}$ 
\item[\normalfont (ii)] The triples $(a_i,b_i,a_{i+1})$ and $(b_i,a_{i+1},b_{i+1})$ bound positively oriented triangles for all $i \in \{1,\ldots,k-2\}$.
\end{enumerate}
Then the chain twists $T_{\vA}$ and $T_{\vB}$ satisfy a braid relation.
\end{prop}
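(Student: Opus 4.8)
The plan is to derive the braid relation from Theorem~\ref{MeshBraid} by verifying that the bracelet completions of $\vA$ and $\vB$ have mesh intersection. Write $\ol\vA = \{a_i : i \in \ZZ/k\ZZ\}$ and $\ol\vB = \{b_j : j \in \ZZ/k\ZZ\}$ for these completions, so that $a_0 = T_\vA(a_{k-1}) = T_{a_1}\cdots T_{a_{k-2}}(a_{k-1})$ and $b_0 = T_\vB(b_{k-1}) = T_{b_1}\cdots T_{b_{k-2}}(b_{k-1})$; by Lemma~\ref{bracelet_properties}(i) these are genuine bracelets. Hypothesis (i) is precisely the restriction to indices in $\{1,\dots,k-1\}$ of the mesh pattern $i(a_i,b_j) = 1 \iff i \in \{j,j+1\}$, so the proposition reduces to computing the intersection numbers in which the index $0$ appears: one must show $i(a_0,b_0) = i(a_0,b_{k-1}) = i(a_1,b_0) = 1$, that $a_0$ is disjoint from each of $b_1,\dots,b_{k-2}$, and that $b_0$ is disjoint from each of $a_2,\dots,a_{k-1}$. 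Granting this, $\ol\vA$ and $\ol\vB$ have mesh intersection and Theorem~\ref{MeshBraid} yields $T_\vA T_\vB T_\vA = T_\vB T_\vA T_\vB$.

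Two facts do the computation. The first is the standard inequality $|\,i(\alpha,\beta)\,i(\alpha,\gamma) - i(\beta,\gamma)\,| \le i(T_\alpha(\beta),\gamma) \le i(\alpha,\beta)\,i(\alpha,\gamma) + i(\beta,\gamma)$ for $i(\alpha,\beta)\ge 1$, whose two bounds collapse to an equality whenever $i(\beta,\gamma) = 0$ or $i(\alpha,\gamma) = 0$. The second is the fact recorded just before the statement: if $(c_1,c_2,c_3)$ bounds a positively oriented triangle then $i(T_{c_1}(c_3),c_2) = 0$ (Figure~\ref{triangles}). I would treat $a_0$ by a descending induction along $\vA$ on the curves $u_j := T_{a_j}T_{a_{j+1}}\cdots T_{a_{k-2}}(a_{k-1})$, with $u_{k-1} = a_{k-1}$ and $u_1 = a_0$, with inductive statement: $u_j$ meets $b_{j-1}$ and $b_{k-1}$ once each and is disjoint from every other $b_l$, satisfies $i(a_{j-1},u_j) = 1$, and runs parallel to $a_j$ near $a_{j-1}\cup b_{j-1}$, so that $(a_{j-1},b_{j-1},u_j)$ inherits the positive orientation of the triple $(a_{j-1},b_{j-1},a_j)$ of hypothesis (ii). The base case is hypothesis (i); in the step $u_{j-1} = T_{a_{j-1}}(u_j)$, the inequality pins down $i(u_{j-1},b_l)$ for $l \ne j-1$, and the positively oriented triangle $(a_{j-1},b_{j-1},u_j)$ gives $i(u_{j-1},b_{j-1}) = 0$. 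Taking $j=1$ records all of the numbers $i(a_0,b_l)$; the analogous induction on $\vB$, using the triples $(b_i,a_{i+1},b_{i+1})$ of hypothesis (ii), records the numbers $i(a_l,b_0)$; and $i(a_0,b_0) = 1$ then follows from one last application of the inequality, exactly as when $k = 3$, since $i(a_1,b_0) = 1$, $i(a_2,b_0) = 0$ and $i(a_1,a_2) = 1$ leave no slack.

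The crux --- and the only place hypothesis (ii) is genuinely needed --- is the claim inside the induction that $u_j$ runs parallel to $a_j$ near $a_{j-1}\cup b_{j-1}$: a priori, twisting $a_{k-1}$ successively over $a_{k-2},a_{k-3},\dots$ might create a bigon with some $b_l$ or push an intersection number up to $2$, and one must check that the positive-orientation condition is exactly the coherence that prevents this at every stage. I expect this to require an explicit local model of the two interleaved chains --- the ``mesh'' of Figure~\ref{mesh_on_a_surface} --- rather than a purely formal argument. A final point worth noting: when $k = 3$ the $2k$ curves of the two completed bracelets need not be distinct, e.g.\ $a_1$ may coincide with $b_2$ as in Figure~\ref{Degenerate}; this causes no difficulty, since neither the computation above nor Theorem~\ref{MeshBraid} assumes distinctness.
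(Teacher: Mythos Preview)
Your reduction to Theorem~\ref{MeshBraid} by checking the missing intersection numbers $i(a_0,b_j)$ and $i(a_i,b_0)$ is exactly the paper's strategy. Where you diverge is in \emph{how} those numbers are computed. You propose a geometric induction on the partial twists $u_j = T_{a_j}\cdots T_{a_{k-2}}(a_{k-1})$, using the intersection inequality together with a running positively-oriented-triangle hypothesis. The paper instead works entirely algebraically: it translates the triangle condition $(b_{i-1},a_i,b_i)$ into the single Dehn-twist identity
\[
T_{b_i}^{-1}T_{b_{i-1}}^{-1}T_{a_i}T_{b_{i-1}}T_{b_i}=T_{b_{i-1}}^{-1}T_{a_i}T_{b_{i-1}},
\]
and then shows $[T_{b_0},T_{a_i}]=1$ for $i\neq 0,1$ and $T_{a_1}T_{b_0}T_{a_1}=T_{b_0}T_{a_1}T_{b_0}$ by formal manipulation of Dehn-twist words (writing $T_{b_0}=\Delta T_{b_{k-1}}\Delta^{-1}$ with $\Delta=T_{b_1}\cdots T_{b_{k-2}}$ and cancelling). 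The intersection numbers then follow from the standard equivalences $i(c,d)=0\iff[T_c,T_d]=1$ and $i(c,d)=1\iff T_cT_dT_c=T_dT_cT_d$. The dual triangles $(a_i,b_i,a_{i+1})$ handle the $i(a_0,b_j)$ side symmetrically, and $i(a_0,b_0)=1$ comes from one more braid computation using the already-established $i(a_0,b_{k-1})=1$.

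The advantage of the paper's route is precisely that it sidesteps the step you flagged as the crux: you never need to argue that $(a_{j-1},b_{j-1},u_j)$ remains a positively oriented triangle as $j$ descends. That propagation claim is plausible but, as you say, is not purely formal and would need a local model to justify; as written it is a genuine gap in your argument. The paper uses each triangle hypothesis exactly once to obtain a group-theoretic identity, after which nothing geometric is tracked. Your final step for $i(a_0,b_0)$ also needs a small patch: the inequality applied to $a_0=T_{a_1}(u_2)$ requires $i(u_2,b_0)=0$, not merely $i(a_2,b_0)=0$; this does follow, since $u_2$ lives in a regular neighbourhood of $a_2\cup\cdots\cup a_{k-1}$ and $b_0$ is disjoint from all of those curves, but it should be said.
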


If each of the $2k-2$ curves in $\vA$ and $\vB$ are distinct then we may view them as depicted in Figure \ref{Mesh}.

Defining $\Sigma_g^m$ to be the regular neighbourhood of the curves and triangles it can be seen that $m = \gcd(3,k)$.  Furthermore, an Euler characteristic argument shows that $g = k-2$ if $m=3$ and $g=k-1$ if $m=1$.  These are precisely the values of $m$ and $g$ that give rise to a $k$-sheeted Burau cover $p_k : \Sigma_g^m \to \Sigma_0^1$ with three branch points.  By using a variation of the change of coordinates principle (see \cite[Section 1.3.2]{Primer}) we may conclude that $T_\vA$ and $T_\vB$ are lifts of half twists that satisfy a braid relation. Hence from Theorem \ref{ClassyLMod} we conclude that $T_\vA$ and $T_\vB$ satisfy a braid relation.

Unlike the discussion above, the following proof of Proposition \ref{ChainBraid} does not make use of the Birman-Hilden Theorem.  As such, it provides a more intrinsic perspective of chain twists satisfying a braid relation, and deals with the case when the curves in $\vA$ and $\vB$ are not distinct.
\begin{figure}[t]
\centering
\includegraphics[scale=0.3]{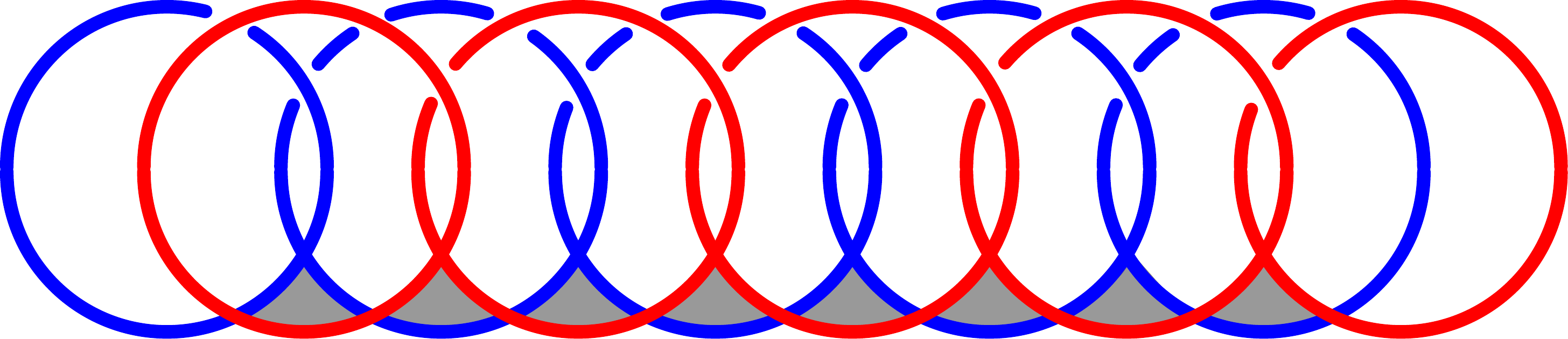}
\caption{Intersecting circles describing two $5$-chains satisfying the conditions of Proposition \ref{ChainBraid}. The regular neighbourhood of this collection of curves and triangles is homeomorphic to $\Sigma^3_4$.}
\label{Mesh}
\end{figure}
\begin{proof}[Proof of Proposition \ref{ChainBraid}]
Let $a_0 = T_{\vA}(a_{k-1})$ and $b_0 = T_{\vB}(b_{k-1})$.  To ease notation let $\Delta = T_{b_1}\cdots T_{b_{k-2}}$ and $\nabla = T_{b_2}\cdots T_{b_{k-1}}$.  Note that $T_{b_0} = \Delta T_{b_{k-1}} \Delta^{-1} = \nabla^{-1}T_{b_1}\nabla$.

By Theorem \ref{MeshBraid} it suffices to show 
\[
i(a_0,b_j) = \begin{cases}
1 &\text{if } j = 0,k-1 \\
0 &\text{otherwise}
\end{cases}
\eand
i(a_i,b_0) = \begin{cases}
1 &\text{if }i = 0,1 \\
0 &\text{otherwise.}
\end{cases}
\]
Let $i \neq 0,1$.  Since $(b_j,a_{j+1},b_{j+1})$ bounds a positively triangle for all $j \in\{1,\ldots,k-2\}$, we have $i(T_{b_j}(b_{j+1}),a_{j+1}) = 0$ so $[T_{b_j}T_{b_{j+1}}T_{b_j}^{-1},T_{a_{j+1}}] = 1$.  Rearranging and relabelling we get
\[
T_{b_{i}}^{-1}T_{b_{i-1}}^{-1}T_{a_i}T_{b_{i-1}}T_{b_i} = T_{b_{i-1}}^{-1}T_{a_i}T_{b_{i-1}}
\]
for all $i \neq 0,1$. We have
\begin{align*}
[T_{b_0},T_{a_i}] &= \Delta T_{b_{k-1}}\Delta^{-1}T_{a_i}\Delta T_{b_{k-1}}^{-1}\Delta^{-1}T_{a_i}^{-1} \\
&= \Delta T_{b_{k-1}}T_{b_{k-2}}^{-1} \cdots T_{b_{i+1}}^{-1}T_{b_i}^{-1}T_{b_{i-1}}^{-1}T_{a_i}\\
& \quad \quad \quad T_{b_{i-1}}T_{b_i}T_{b_{i+1}} \cdots T_{b_{k-2}}T_{b_{k-1}}^{-1}\Delta^{-1}T_{a_i}^{-1} \\
&= \Delta T_{b_{k-1}}T_{b_{k-2}}^{-1} \cdots T_{b_{i+1}}^{-1}T_{b_{i-1}}^{-1}T_{a_i}T_{b_{i-1}}T_{b_{i+1}} \cdots T_{b_{k-2}}T_{b_{k-1}}^{-1}\Delta^{-1}T_{a_i}^{-1} \\
&= T_{b_1}\cdots T_{b_{i-1}}T_{b_i}T_{b_{i-1}}^{-1}T_{a_i}T_{b_{i-1}}T_{b_i}^{-1}T_{b_{i-1}}^{-1} \cdots T_{b_1}^{-1}T_{a_i}^{-1} \\
&= T_{b_1} \cdots T_{b_{i-1}}T_{b_{i-1}}^{-1}T_{a_i}T_{b_{i-1}}T_{b_{i-1}}^{-1}\cdots T_{b_1}^{-1}T_{a_i}^{-1} \\
&= T_{b_1} \cdots T_{b_{i-2}}T_{a_i}T_{b_{i-2}}^{-1} \cdots T_{b_1}^{-1}T_{a_i}^{-1} \\
&= 1.
\end{align*}
Therefore $i(a_i,b_0) = 0$.  When $i = 1$ we have
\begin{align*}
T_{a_1}T_{b_0}T_{a_1}&= T_{a_1}\nabla^{-1}T_{b_1}\nabla T_{a_1}\\
&= \nabla^{-1}T_{a_1}T_{b_1}T_{a_1}\nabla \\
&= \nabla^{-1}T_{b_1}T_{a_1}T_{b_1}\nabla \\
&= \nabla^{-1}T_{b_1}\nabla T_{a_1}\nabla^{-1}T_{b_1}\nabla \\
&= T_{b_0}T_{a_1}T_{b_0}
\end{align*}
so $i(a_1,b_0) = 1$.  Similar arguments show $i(a_0,b_j) = 0$ for $j \neq 0,k-1$ and $i(a_0,b_{k-1}) = 1$.

It remains to show $i(a_0,b_0) = 1$.  We have
\begin{align*}
T_{a_0}T_{b_0}T_{a_0} &= T_{a_0}\Delta T_{b_{k-1}}\Delta^{-1} T_{a_0}\\
&= \Delta T_{a_0}T_{b_{k-1}}T_{a_0}\Delta^{-1} \\
&= \Delta T_{b_{k-1}}T_{a_0}T_{b_{k-1}}\Delta^{-1} \\
&= \Delta T_{b_{k-1}}\Delta^{-1}T_{a_0}\Delta T_{b_{k-1}}\Delta^{-1} \\
&= T_{b_0}T_{a_0}T_{b_0}
\end{align*}
completing the proof.
\end{proof}

See Figure \ref{mesh_on_a_surface} for two 3-chains on $\Sigma_3^1$ satisfying the conditions of Lemma \ref{ChainBraid}.  The positively oriented triangles are shaded in grey.

\subsection{Open Questions}
Here are a few natural questions relating to the braid group embeddings constructed above.  Recall that for each $k \geq 3$ and $n \geq 2$ we have constructed an embedding $\beta_k: B_n \hookrightarrow \Mod(\Sigma_g^m)$ arising from the $k$-sheeted Burau cover.  Here, $m = \gcd(n,k)$ and $g = 1-\frac 12(k+n+m-nk)$.

\subsection*{Necessity of mesh intersection}

When two simple closed curves $a$ and $b$ on a surface intersect once, then $T_a$ and $T_b$ satisfy a braid relation.  In fact, this condition is necessary.  That is, $T_aT_bT_a = T_bT_aT_b$ if and only if $i(a,b) = 1$ (see \cite[\S 3.5]{Primer}).  The next question asks the analogous question for chain twists.

\begin{qu}
Suppose $\vA$ and $\vB$ are $k$-chains for $k \geq 2$, and let $T_{\vA}$ and $T_{\vB}$ be the corresponding chain twists.  Is it true that if $T_{\vA}T_{\vB}T_{\vA} = T_{\vB}T_{\vA}T_{\vB}$, then the bracelet completions of $\vA$ and $\vB$ have mesh intersection?
\end{qu}

\subsection*{Automorphisms of free groups}
For a surface $\Sigma$ with non-empty boundary, there is a homomorphism $\Mod(\Sigma) \to \Aut(\pi_1(\Sigma))$ given by the action of $\Mod(\Sigma)$ on the fundamental group of $\Sigma$ with a basepoint on the boundary.  For a surface of genus $g$ and $m$ boundary components, $\pi_1(\Sigma_g^m) \cong F_{2g+m-1}$.  Precomposing with the braid group embeddings above, we get an induced homomorphism from the braid group into the automorphism group of a free group.

\begin{qu}
Let $k\geq 3$.  For each $n \geq 2$ there is a homomorphism $\phi_{n,k}:B_n \to F_{(n-1)(k-1)}$.  What can be said about this family of homomorphisms?  Do they give rise to new embeddings of the braid group in $\Aut(F_n)$?
\end{qu}

\subsection*{Triviality of the induced map on stable homology}

There is a geometric embedding $B_{2g} \hookrightarrow \Mod(\Sigma_g^1)$ for each $g$.  This family of embeddings gives a map from $B_\infty = \lim_{g \to \infty} B_{2g}$ to $\Gamma_{\infty} = \lim_{g \to \infty} \Mod(\Sigma_g^1)$.  In the 1980s J. Harer conjectured that the induced map on stable homology $H_*(B_\infty;\ZZ/2\ZZ) \to H_*(\Gamma_{\infty};\ZZ/2\ZZ)$ is trivial.  The conjecture was proved by Song and Tillman in \cite[Theorem 1.1]{ST}.  A stronger version of Harer's conjecture was proved for a large family of non-geometric embeddings of the braid group in \cite{BT}.

\begin{qu}
Fix $k > 3$.  Is there a version of Harer's conjecture that is true with respect to the embeddings $\beta_k:B_n \hookrightarrow \Mod(\Sigma_g^m)$?
\end{qu}
Note that this question is answered affirmatively for stable homology with any coefficients when $k = 3$ by Kim-Song \cite[Theorem 3.4]{KS}.

\subsection*{Classifying braid embeddings}
There are now infinite families of non-geometric embeddings of braid groups in mapping class groups.

\begin{qu}
Is there a classification of all possible conjugacy classes of embeddings of the braid group in the mapping class group?
\end{qu}

The proof of Theorem \ref{MeshBraid} suggests a way to construct more examples as follows.

Suppose we have two subsets of mapping classes $\{\phi_i\}$ and $\{\theta_i\}$ indexed by $\ZZ / k\ZZ$ such that
\[
\phi_i \phi_j = \begin{cases} 
\phi_j \phi_i \phi_j \phi_i^{-1} & \text{if } j = i-1,i+1, \\
\phi_j \phi_i & \text{otherwise,}
\end{cases}
\]
\[
\theta_i \theta_j = \begin{cases} 
\theta_j \theta_i \theta_j \theta_i^{-1} & \text{if } j = i-1,i+1, \\
\theta_j \theta_i & \text{otherwise.}
\end{cases}
\]
Suppose further that
\[
\phi_i \theta_j = \begin{cases} 
\theta_j \phi_i \theta_j \phi_i^{-1} & \text{if } i = j,j+1, \\
\theta_j \phi_i & \text{otherwise,}
\end{cases}
\]
and for any $i \in \ZZ/k \ZZ$ we have
\[
\Phi := \phi_i \dots \phi_{i-2} = \phi_{i+1} \dots \phi_{i-1} \eand 
\Theta := \theta_i \dots \theta_{i-2} = \theta_{i+1} \dots \theta_{i-1}.
\]
Then the products $\Phi$ and $\Theta$ satisfy the braid relation, that is, $\Phi \Theta \Phi  = \Theta \Phi \Theta$.

We conjecture however, that this is only possible when each $\phi_i$ and $\theta_i$ is a Dehn twist and the corresponding sets of curves are bracelets with mesh intersection. There is no particular reason to assume otherwise, except to satisfy our own insatiable desire for pattern.

%%%%%%%%%%%%%%%%%%%%%%%%%%%%%%%%%%%

\bibliographystyle{plain}
%\addcontentsline{toc}{section}{Bibliography}
\bibliography{bhboundary}

\end{document}